\documentclass[a4paper, 11pt]{preprint}

\usepackage{nth}

\usepackage[cachedir=minted, frozencache=true]{minted}
\colorlet{CodeBackground}{black!5!white} %
\setminted{encoding=utf8, mathescape, bgcolor=CodeBackground, fontsize=\footnotesize}
\setminted[macaulay2]{label={\TT{Macaulay2}}}
\setminted[mathematica]{label={\TT{Mathematica}}}

\usepackage[style=numeric, citestyle=numeric-comp, backend=biber, sorting=nyt, maxbibnames=99]{biblatex}
\bibliography{refs}

\usepackage{pgfplots}
\pgfplotsset{compat=1.18}

\usepackage{caption}
\usepackage{wrapfig}

\usepackage{dsfont}
\renewcommand{\BB}[1]{\mathds{#1}}

\newcommand{\ZZ}{\BB Z}
\newcommand{\QQ}{\BB Q}
\newcommand{\RR}{\BB R}

\newcommand{\PD}{\RM{PD}}

\newcommand{\Oh}{\mathcal{O}}
\renewcommand{\Pow}[1]{\mathscr{P}(#1)}

\makeatletter
\newcommand{\dol}[1]{\overline{\dbl@overline{#1}}}
\newcommand{\dbl@overline}[1]{\mathpalette\dbl@@overline{#1}}
\newcommand{\dbl@@overline}[2]{%
  \begingroup
  \sbox\z@{$\m@th#1\overline{#2}$}%
  \ht\z@=\dimexpr\ht\z@-2\dbl@adjust{#1}\relax
  \box\z@
  \ifx#1\scriptstyle\kern-\scriptspace\else
  \ifx#1\scriptscriptstyle\kern-\scriptspace\fi\fi
  \endgroup
}
\newcommand{\dbl@adjust}[1]{%
  \fontdimen8
  \ifx#1\displaystyle\textfont\else
  \ifx#1\textstyle\textfont\else
  \ifx#1\scriptstyle\scriptfont\else
  \scriptscriptfont\fi\fi\fi 3
}
\makeatother

\Crefname{problem}{Problem}{Problems}

\TodoColor{Outline}{blue}

\title[Sharp Inequalities for Principal Minors of Positive Definite Matrices]{%
Sharp Inequalities for Products of Principal Minors\\%
of Positive Definite Matrices}

\author{Tobias Boege}
\address{Department of Mathematics and Statistics, UiT The Arctic University of Norway, Tromsø, Norway}
\email{post@taboege.de}

\author{Ludovick Bouthat}
\address{Département de mathématiques et statistiques, Université Laval, Québec, Canada}
\email{ludovick.bouthat.1@ulaval.ca}

\date{\today}

\subjclass[2020]{%
15A15, %
51M16  %
(primary)
14Q30, %
52A40, %
90C23, %
94A17  %
(secondary)%
}

\keywords{%
Determinantal inequality,
Principal minors,
Entropy region,
Information inequality,
Conditional independence,
Cylindrical algebraic decomposition,
Polynomial optimization,
Ingleton inequality%
}

\usepackage{ragged2e}
\usepackage{marginnote}
\begin{document}

\begin{abstract}
We study sharp inequalities for ratios of products of principal minors of real positive definite matrices. Our main result gives a closed-form solution to a family of nonconvex optimization problems over the positive definite cone. As a special case, we prove that the infimum of the Ingleton ratio over $4\times 4$ positive definite matrices is $16/27$, confirming a conjecture of Hall and Johnson. We also show that the cone of absolutely bounded ratios of products of principal minors is not polyhedral for $n\ge 4$, and that it is not semialgebraic over $\mathbb{Q}$. %
\end{abstract}

\maketitle

\section{Introduction}
\label{sec: intro}

Positive definite matrices and their principal minors are a core topic in matrix analysis. They play important roles in various applications, e.g., as determinantal point processes~\cite{DetPointProc}, for semidefinite and polynomial optimization~\cite{Lasserre}, or for Lyapunov stability in control theory~\cite{MatrixAnalysis}. Determinantal inequalities among the principal minors naturally constrain well-studied problems including quantum entanglement~\cite{Horodecki,LogDetEntangle}, the symmetric principal minor assignment problem~\cite{Oeding}, and positive definite matrix completion~\cite{PDCompletion}.

In this paper, we consider a class of constrained optimization problems involving products of principal minors over subsets of $\PD_n$, the cone of real positive definite $n \times n$ matrices. The~rows and columns of these matrices are indexed by the set $N = \{1, \dots, n\}$. For a subset $K \subseteq N$, the principal $K \times K$ minor of $\Sigma \in \PD_n$ is denoted by the bracket $\Sigma[K]$. We omit~$\Sigma$ if it is immaterial or clear from context, and we write $[123]$ instead of $[\{1,2,3\}]$ for brevity. For a set function $\alpha\colon \Pow{N}\to \RR$, let $[\alpha]\colon \PD_n \to \RR$ be the functional $\Sigma \mapsto \Sigma[\alpha] \defas \prod_{K \subseteq N} \Sigma[K]^{\alpha(K)}$, which evaluates the product of the principal minors of $\Sigma$ raised to their respective powers as given by $\alpha$.
We can now state our objective formally:
\begin{problem} \label{prob: main}
Given $\alpha, \beta_1,\dots, \beta_r \colon \Pow{N} \hspace{-1pt}\to\hspace{-1pt} \RR$, minimize $\Sigma[\alpha]$ over $\CC M_{\!\beta} := \bigl\{ \Sigma \hspace{-1pt}\in\hspace{-1pt} \PD_n : \Sigma[\beta_i] = 1 \bigr\}$.
\end{problem}
\noindent%
Since all principal minors are positive, the objective function is well-defined over the domain $\CC M_{\!\beta}$ and the infimum is nonnegative. As the set $\PD_n$ is open, the infimum need not be attained.

We have formulated \Cref{prob: main} broadly because several questions considered in this paper are most naturally viewed as different faces of the same optimization problem. In particular, the validity of \emph{Gaussian information inequalities}, the validity of \emph{conditional information inequalities}, and the search for sharp constants in determinantal inequalities all reduce to instances of \Cref{prob: main}. We state these problems separately below in order to keep track of their different motivations, even though most results in the paper are proved using a matrix-analytic framework.

Although one source of motivation comes from information theory, the results and proofs are formulated in terms of positive definite matrices and their principal minors. \Cref{sec: info} reviews the relevant information-theoretic background and explains why the particular functionals studied here are natural. Readers primarily interested in matrix theory may read this section as motivation: the main theorems and their proofs only require the determinantal formulations stated there.

\subsection*{Determinantal inequalities}

\Cref{prob: main} is intimately connected to determinantal inequalities for principal minors. %
Among the many prior works on this topic, we highlight two which are closest to our approach and inspired some of our investigations. Following Hall and Johnson~\cite{HJ}, a functional $[\alpha]$ is \emph{bounded} if there exists a constant $c>0$, independent of~$\Sigma$, such that $\Sigma[\alpha] \ge c$ for all $\Sigma \in \PD_n$. (To be precise, Hall and Johnson consider functionals $[\alpha]$ which are bounded from above, but this is of course equivalent to $[-\alpha] = [\alpha]^{-1}$ being bounded away from~$0$.) The functional $\alpha$ is \emph{absolutely bounded} if $c = 1$ is a lower bound. Note that if $\alpha$ and $\alpha'$ are (absolutely) bounded and $\lambda \ge 0$, then $\alpha + \lambda \alpha'$ is also (absolutely) bounded. Thus, the collections $\CC B_n$ and $\CC A_n$ of bounded and absolutely bounded functionals, respectively, are convex cones in $\BB R^{2^n}$.

The paper by Johnson and Barrett \cite{JB} is dedicated to inequalities among two products of principal minors and contains a characterization of absolutely bounded $\alpha$ whenever all but six entries of $\alpha$ are zero. The criterion boils down to a sequence of applications of the Koteljanskii inequality, stating that for any $K, L \subseteq N$,
\begin{equation}
  \label{eq: kotel}
  [K][L] \ge [K \cup L] [K \cap L].
\end{equation}
It follows from these results that the cone $\CC A_3$ is \emph{polyhedral}, i.e., the convex hull of finitely many extreme rays (namely, given by the Koteljanskii ratios).
Hall and Johnson~\cite{HJ} investigate, more generally, bounded ratios of products of principal minors. Generalizing the Koteljanskii machinery to inequalities derived from the rank functions of linear matroids, they show that $\CC B_4$ is polyhedral and conjecture that this is true in higher dimensions as well. Our first main result is that the analogous conjecture for $\CC A_n$ fails already at the earliest possible step.

\begin{result} \label{res: A4 non-polyhedral}
The convex cone $\CC A_n$ is not polyhedral for $n \ge 4$.
\end{result}

The proof, which is the main objective of \Cref{sec: cone}, is very much inspired by an analogous result in information theory due to Matúš~\cite{MatusInfinite}. In particular, it comes with an infinite sequence of concrete determinantal inequalities which approaches the boundary of $\CC A_4$ in a genuinely nonlinear way. This gives a concrete obstruction to polyhedrality, rather than only an abstract separation argument. Since $\CC A_n$ is obtained as a linear projection of $\CC A_{n+1}$, the non-polyhedrality of $\CC A_4$ implies non-polyhedrality in higher dimensions.

\subsection*{The Ingleton and GMM functionals}

The functional $\varphi$ given by
\begin{equation}
  \label{eq: ingleton}
  [\varphi] = \frac{[13][14][23][24][34]}{[3][4][12][134][234]}
\end{equation}
appears (permuted and reciprocal) in \cite[Example~5.2]{JB} as a sporadic example that the necessary conditions for absolute boundedness of Johnson and Barrett are not sufficient. Hall and Johnson \cite[Example~2]{HJ} prove that $[\varphi] \ge \sfrac14$ and conjecture that its true infimum is $\sfrac{16}{27}$. This is our second main result.

\begin{result} \label{res: 1627}
The infimum of $[\varphi]$ over $\PD_4$ is $\sfrac{16}{27}$ and it is not attained.
\end{result}

Despite its ad-hoc appearance in the context of \cite{JB}, this functional is very well known. It~is a celebrated result of Ingleton \cite{Ingleton} that if $r\colon \Pow{N} \to \ZZ$ is the rank function of a linear matroid, i.e., it is the dimension of the span of some family of vectors indexed by $K\subseteq\Pow{N}$, then it satisfies the \emph{Ingleton inequality} %
\[
\langle r, \varphi\rangle = r(13)+r(14)+r(23)+r(24)+r(34)-r(3)-r(4)-r(12)-r(134)-r(234) \geq 0.
\]
The significance of this inequality in information theory, namely in the Shannon setting of discrete random variables, was highlighted by Matúš and Studený~\cite{MatusI}. While Shannon entropy profiles can violate the Ingleton inequality, the study of sufficient conditions on the distribution under which the inequality does hold turned out to be extremely fruitful; see~\cite{CondIngleton,Milan1,AndreiSlava}.

Our techniques for proving the $\sfrac{16}{27}$ bound extend to a more complicated family of functionals
\begin{equation}
  \label{eq: gmm}
  [\varphi_k] \defas [\varphi] \cdot \left(\frac{[13][34]}{[3][134]} \cdot \frac{[14][34]}{[4][134]} \cdot \frac{[23][34]}{[3][234]} \cdot \frac{[24][34]}{[4][234]}\right)^k, \text{ for real $k \ge 0$},
\end{equation}
which originates from work of Gómez, Mejía, and Montoya \cite{GMM}. The functions $\varphi_k$ are the centerpiece in their approach to proving that the so-called \emph{almost-entropic region} in the Shannon setting is not semialgebraic, i.e., it is not described by a finite system of polynomials equations and inequalities. Not enough is known about $\varphi_k$ yet to reach this conclusion.

Semialgebraicity is a useful geometric ``tameness'' condition which has algorithmic implications. By definition, to test whether $\alpha \in \CC A_4$, one has to prove the universal inequality $\Sigma[\alpha] \ge 1$ for all $\Sigma \in \PD_4$. If $\CC A_4$ were semialgebraic, then testing $\alpha \in \CC A_4$ would be as simple as plugging $\alpha$ into finitely many polynomials and evaluating their signs.
Therefore, the semialgebraicity problem is a major open question in the theory of information inequalities and a benchmark problem to test new information-theoretic constructions against \cite{CsirmazExploring,AndreiSlava}; we~discuss the connection of this problem to the family $\varphi_k$ more in \Cref{sec: info}. Moreover, in \Cref{sec: 1627}, we completely solve the Gómez--Mejía--Montoya mystery for $\PD_4$~matrices.

\begin{result} \label{res: inf varphi_k}
The infimum of $[\varphi_k]$ over $\PD_4$ is given by
\[
  \begin{cases}
    \frac{(4k+4)^{4k+4}}{16 (4k+3)^{4k+3}}, & \text{if $0 \le k < k_*$}, \\
    1, & \text{if $k \ge k_*$},
  \end{cases}
\]
where $k_* \approx 0.59829$ is the unique positive root of the equation $(4k+4)^{4k+4} = 16 (4k+3)^{4k+3}$.
\end{result}

The proof of this result is long but remarkably elementary. Moreover, we note that its most technical part has
also been formalized in Lean.

At $k = 0$, \Cref{res: inf varphi_k} recovers \Cref{res: 1627}. For $k=1 > k_*$ we find a functional which is absolutely bounded but not a product of Koteljanskii ratios, answering another question of \cite[Section~4]{HJ}. This closed-form solution to the optimization problem for $[\varphi_k]$ yields further insights into the geometry of the convex cone~$\CC A_4$.

\begin{result} \label{res: A4 non-semialg}
The value $k_*$ is transcendental and therefore $\CC A_n$, $n \geq 4$, is not semialgebraic over~$\QQ$.
\end{result}

There is a subtlety to the interpretation of our result: we only show that $\CC A_4$ is not semialgebraic over~$\QQ$. It is still open whether $\CC A_4$ is semialgebraic over an extension field $\QQ(\tau_1, \dots, \tau_m)$, where $\tau_1, \dots, \tau_m$ are finitely many transcendental numbers; in~particular we cannot say whether $\CC A_4$ is semialgebraic over~$\QQ(k_*)$. To rule out such a representation one would have to find a whole transcendental curve of valid information inequalities whose points approach the boundary of~$\CC A_4$ arbitrarily well.
For the non-polyhedrality result (\Cref{res: A4 non-polyhedral}) we exhibit a non-linear (quadratic) curve at the boundary of~$\CC A_4$. This rules out a description by finitely many linear inequalities even when their coefficients can be drawn from~$\RR$. This is precisely why \Cref{res: A4 non-semialg} does not supersede \Cref{res: A4 non-polyhedral}.

\subsection*{Organization of the paper}
In \Cref{sec: info}, we recall the connection between Gaussian information inequalities and determinantal inequalities for principal minors. This section also introduces conditional inequalities and the GMM problem, which motivate the definition of the family $\varphi_k$. In \Cref{sec: cone}, we use copy-type arguments and essential conditionality to prove that $\CC A_n$ is not polyhedral for $n \ge 4$. Finally, \Cref{sec: 1627} is devoted to the sharp optimization problem for the Ingleton and GMM functionals; this proves the $\sfrac{16}{27}$ bound, determines the threshold $k_*$, and yields the non-semialgebraicity result over~$\QQ$.

\subsection*{Computational aspects}

In general, neither the objective function, nor the constraints in \Cref{prob: main} are convex, and local optima need not be global. However, it is an instance of \emph{polynomial programming}. %
By standard results from real algebraic geometry (see \cite{RAGOpt}), the infimum is a computable algebraic number. However, in practice, these problems are very challenging since we seek \emph{sharp} lower bounds on rational functions whose numerator and denominator usually feature high degrees and many terms. For instance, the Ingleton functional $[\varphi]$ has numerator and denominator both of degree 10, with 32 and 48 terms, respectively. In this case, the infimum is not attained, and approaching it requires approaching the boundary of the $\PD_4$ cone, where some of the principal minors become zero. This, in turn, creates numerical instabilities.

For all of our results, we provide a classical analytic proof. Nonetheless, we also use in parallel an entirely robust but computationally expensive symbolic approach to \Cref{prob: main} via \emph{cylindrical algebraic decomposition (CAD)}. A delightful hands-on introduction to this tool is given in \cite{Kauers}. The problems we discuss in this article appear to be just beyond the verge of intractability of naïve applications of CAD as it is currently implemented in Wolfram {Mathematica}~\cite{Mathematica}.

Nevertheless, computation is used in various places in this paper to identify specific examples. In addition to CAD, we use power series expansions of rational or analytic functions to construct interesting curves of positive definite matrices.
The source code for all these computations, including the Lean formalization of \Cref{res: inf varphi_k}, is available from this Zenodo record:
\begin{center}
  \url{https://doi.org/10.5281/zenodo.20785034}.
\end{center}

\section{From information theory to polynomial optimization}
\label{sec: info}

This section introduces concepts from information theory and associated decision problems. We~explain how these tasks relate to the matrix-analytic \Cref{prob: main}. In the following sections, we freely use these concepts to motivate the study of concrete instances of this problem, and to transfer information-theoretic techniques to obtain results of general interest for matrix theorists.

\subsection{Information inequalities}

The differential entropy of an $n$-variate Gaussian random vector $X$ with positive definite covariance matrix $\Sigma$ is $H(X) := \frac{n}{2} \log(2 \pi e) + \frac12 \log \lvert \Sigma \rvert$. The \emph{entropy profile} $h_X\colon \Pow{N} \to \BB R$ of $X$ records the entropies of all its marginals, including the entire distribution and the value $0$ for the marginal corresponding to the empty subvector:
\begin{equation}
  h_X(K) = H(X_K) = \frac{|K|}{2} \log(2 \pi e) + \frac12 \log \Sigma[K].
\end{equation}
For fixed $n$, let $\gamma_n^* \subseteq \BB R^{2^n}$ be the set of all such entropy profiles as $\Sigma$ varies in $\PD_n$.
An~\emph{information functional} is any linear functional of the entropy profile. It takes the form of a scalar product $\langle \alpha, h_X\rangle = \sum_{K \subseteq N} \alpha(K) h_X(K)$ for some $\alpha\colon \Pow{N} \to \BB R$. %
An information functional such that $\langle \alpha, h\rangle \ge 0$ for all $h \in \gamma_n^*$ represents an \emph{information inequality}. The information inequalities for a fixed $n$ form a convex cone. They encode universal information-theoretic properties of Gaussian random variables, which are essential tools in certifying the optimality of solutions to coding problems; see~\cite[Chapter~12]{Yeung}.

Determining if a functional $\alpha$ is a valid information inequality is a fundamental task which received considerable attention in the Shannon setting of discrete random variables. To illustrate this, it shall suffice to mention the catalogue of Dougherty, Freiling and Zeger \cite{DFZ11} containing more than 200 valid inequalities among four discrete random variables, and several infinite families. The analogous problem for Gaussian random variables has received less attention, although its connection with determinantal inequalities is well known~\cite{CoverThomasDet,InfoDet,GaussianEntropy}. In this setting, validity can be reformulated as a question on products of principal minors, and hence as a special case of \Cref{prob: main}. Let us mention a necessary condition for validity, namely the \emph{balance condition}. This condition appears in \cite[Theorem~1]{ChanBalanced} for arbitrary continuous distributions, and it remains necessary in the Gaussian setting, as it follows by simply considering diagonal covariance matrices~\cite{JB}.

\begin{lemma}[Balance condition]
For $\alpha\colon \Pow{N} \to \BB Z$ to be a valid information inequality on $\gamma_n^*$, it has to be \emph{balanced}: for each $i \in N$, it must satisfy $\sum_{i \in K \subseteq N} \alpha(K) = 0$.
\end{lemma}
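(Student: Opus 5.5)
We restrict to diagonal covariance matrices, as the paper suggests, and show that validity on this family already forces balance. Take $\Sigma = \operatorname{diag}(\sigma_1,\dots,\sigma_n)$ with all $\sigma_i > 0$. Then $\Sigma[K] = \prod_{i\in K}\sigma_i$ for every $K \subseteq N$, and the entropy profile becomes
\[
h_X(K) = \frac{|K|}{2}\log(2\pi e) + \frac12\sum_{i \in K} \log \sigma_i .
\]
Writing $t_i = \log\sigma_i$ and exchanging the order of summation, the information functional is
\[
\langle \alpha, h_X\rangle = \frac{\log(2\pi e)}{2}\sum_{K}\alpha(K)|K| + \frac12\sum_{i\in N} t_i \, c_i,
\qquad c_i \defas \sum_{i\in K\subseteq N}\alpha(K).
\]
Note that $\sum_K \alpha(K)|K| = \sum_i c_i$, so the right-hand side is an affine function of $t = (t_1,\dots,t_n)$ with linear part $\frac12\langle c, t\rangle$.

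Since the $\sigma_i > 0$ are arbitrary, $t$ ranges over all of $\BB R^n$. Validity of $\alpha$ requires this affine function to be nonnegative on all of $\BB R^n$, and an affine function bounded below on $\BB R^n$ must have zero linear part. Concretely, if some $c_i \neq 0$, set $t_j = 0$ for $j \ne i$ and let $t_i \to -\operatorname{sign}(c_i)\cdot\infty$; the functional then tends to $-\infty$, contradicting validity. Hence $c_i = 0$ for every $i \in N$, which is exactly the balance condition.

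There is no real obstacle in this argument. The only step needing care is the bookkeeping: the constant $\frac{|K|}{2}\log(2\pi e)$ terms contribute only a fixed constant, $\frac{\log(2\pi e)}{2}\sum_i c_i$, and do not affect the unboundedness in $t$. Integrality of $\alpha$ plays no role in the argument.
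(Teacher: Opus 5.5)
Your proof is correct and takes the route the paper indicates: restrict to diagonal covariance matrices, where $\langle \alpha, h_X\rangle$ becomes an affine function of $(\log\sigma_1,\dots,\log\sigma_n)$ whose linear coefficients are exactly the balance sums, so these must vanish. The paper gives only a one-line pointer to this argument (crediting Johnson and Barrett), and you have written out the details correctly, including the handling of the constant $\log(2\pi e)$ term.
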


By adding up all these equations, it follows that $\sum_{K \subseteq N} |K| \, \alpha(K) = 0$. Hence, for balanced~$\alpha$:
\begin{align*}
  \langle \alpha, h_X \rangle &= \frac12 \log(2\pi e) \sum_{K \subseteq N} |K| \, \alpha(K) + \frac12 \sum_{K \subseteq N} \alpha(K) \log \Sigma[K] = \frac12 \sum_{K \subseteq N} \alpha(K) \log \Sigma[K].
\end{align*}
Thus, the value of a balanced functional depends only on the logarithms of the principal minors. By clearing the factor $\sfrac12$ and taking the exponential, we arrive at the following equivalent formulation of checking the validity $\langle \alpha, h_X\rangle \ge 0$:

\begin{problem}[Validity of information inequalities] \label{prob: valid}
For a balanced $\alpha\colon \Pow{N} \to \BB Z$, decide whether $\Sigma[\alpha] \ge 1$ for all $\Sigma \in \PD_n$.
\end{problem}

Functionals which are bounded from below by~$1$ are called \emph{absolutely bounded} in \cite{HJ}. Hence, proving validity of Gaussian information inequalities is nothing but proving absolute boundedness of the corresponding ratio of products of principal minors.
\Cref{prob: valid} is thus an instance of \Cref{prob: main} where the optimization is unconstrained. %

The~Shannon version of the validity problem, where the random variables are discrete rather than Gaussian, is conjectured to be undecidable. For the related problem of membership testing in the dual cone, undecidability has recently been established by Yashfe~\cite{Yashfe}. The connection between the Shannon and Gaussian settings is provided by a result of Chan \cite[Theorem~2]{ChanBalanced}, which shows that balanced information inequalities valid for discrete random variables are also valid for continuous random variables.

\begin{theorem}[Chan's transfer principle] \label{thm: Chan transfer}
Let $\alpha\colon \Pow{N} \to \RR$ be balanced. It is a valid information inequality for discrete random variables if and only if it is valid for continuous random variables.
\end{theorem}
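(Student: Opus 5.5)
The plan is to prove the two directions separately. Both rest on one fact: a balanced functional is invariant under the affine changes that link discrete entropy to differential entropy. Recall that for discrete $X_N$ the entropy profile is $h(K)=H(X_K)$ with $H$ the Shannon entropy. For continuous $X_N$ with a density, $h(K)$ is the differential entropy of the marginal, which we assume exists and is finite.

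\emph{Continuous $\Rightarrow$ discrete.} Let $X_N$ be discrete and valued in a finite (or countable) set; we may take the values to be integers. Let $U_N$ be independent uniform random variables on $[0,1)$, independent of $X_N$, and set $Y_i = X_i + U_i$. Then $Y_K$ has a density that is piecewise constant on unit cubes, with value $P(X_K=x)$ on the cube $x+[0,1)^K$. A direct computation gives $h(Y_K)=H(X_K)$ exactly. Validity for continuous variables then yields $\langle\alpha,h_X\rangle=\langle\alpha,h_Y\rangle\ge 0$. For this direction balancedness is not even needed, though it will be needed below.

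\emph{Discrete $\Rightarrow$ continuous.} Take $X_N$ with a joint density. For a scale $\delta>0$, quantize each coordinate: $X^{\delta}_i=\lfloor X_i/\delta\rfloor$. The standard quantization theorem (Cover--Thomas) gives
\[
H(X^\delta_K)+|K|\log\delta \;\to\; h(X_K) \quad\text{as } \delta\to 0 .
\]
We apply it to each of the finitely many marginals $K$, using the same $\delta$ for all coordinates. Balancedness then enters: $\sum_K \alpha(K)|K|=0$, so the correction terms cancel and
\[
\langle \alpha, h_{X^\delta}\rangle \;\to\; \langle \alpha, h_X\rangle .
\]
Each $\langle\alpha,h_{X^\delta}\rangle$ is nonnegative by discrete validity, so the limit is nonnegative. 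One technical point is that the quantizations can take countably many values. We handle this either by allowing countable discrete variables, truncating to a large box and passing to a limit, or by using the fact that entropies of finite-valued truncations converge.

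\emph{Main obstacle.} The delicate step is justifying the quantization limit for every marginal simultaneously under weak regularity assumptions on the density. The Cover--Thomas result assumes Riemann integrability of $f\log f$. In general, Rényi's theorem requires that $H(X^1_K)<\infty$, and then $h(X_K)$ is the limit. To keep things simple, I would first prove the claim for bounded, compactly supported continuous densities, where the Riemann-sum argument is routine. I would then extend by approximation, since entropy profiles of such densities are dense among those of the densities under consideration. For the Gaussian case needed in this paper, the quantization argument applies directly because Gaussian densities are smooth with finite moments. That case alone suffices to transfer discrete inequalities to \Cref{prob: valid}.
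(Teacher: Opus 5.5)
The paper does not prove this theorem itself; it only cites Chan. Your argument is the standard proof of that cited result and is correct: exact dithering $Y_i=X_i+U_i$ gives $h(Y_K)=H(X_K)$ for one direction, and coordinatewise quantization for the other, with the $|K|\log\delta$ corrections cancelling by balancedness. Your flagged caveat is the right one: the quantization step needs a regularity hypothesis such as $H(\lfloor X_i\rfloor)<\infty$, or your truncation-and-approximation step, which you assert rather than prove. This is harmless for the Gaussian densities the paper actually uses.
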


It follows that all balanced and valid information inequalities in the Shannon setting must be valid for Gaussians. Contrapositively, this gives an approach to disprove information inequalities in the Shannon setting: if there exists $\Sigma \in \PD_n$ such that $\Sigma[\alpha] < 1$, then $\alpha$ is not a valid information inequality for discrete random variables (provided that $\alpha$ is balanced).

\subsection{Extreme position constraints}

If $[\beta] \ge 1$ is a valid information inequality, then the set $\CC M_{\!\beta} = \Set{ \Sigma \in \PD_n : \Sigma[\beta] = 1 }$ is of interest. It parametrizes those Gaussian distributions which are in ``extreme position'' with respect to the functional~$\beta$. Constraints of this form describe various information-theoretic special situations, such as independence of random variables. Under such assumptions, stronger inequalities may hold, which are called \emph{conditional information inequalities}. This motivates the shape of the constraints in \Cref{prob: main}.

\begin{problem}[\hspace{-1pt}Validity of conditional information inequalities] \label{prob: cond valid}
\hspace{-.5pt}Let $\alpha, \beta_1, \dots, \beta_r\colon \hspace{-1pt}\Pow{N} \hspace{-1pt}\to\hspace{-1pt} \ZZ$ be balanced. Decide whether $\Sigma[\alpha] \ge 1$ for all $\Sigma \in \PD_n$ satisfying $\Sigma[\beta_1] = \dots = \Sigma[\beta_r] = 1$.
\end{problem}

For any $\CC M \subseteq \PD_n$ we write $\CC M \models [\alpha] \ge 1$ if the inequality holds for all $\Sigma \in \CC M$, i.e., if $[\alpha] \ge 1$ is valid conditionally on the set~$\CC M$. If $\CC M = \Set{\Sigma \in \PD_n : \Sigma[\beta_1] = \dots = \Sigma[\beta_r] = 1}$, then this may also be written as an implication among determinantal constraints: $[\beta_1] = \dots = [\beta_r] = 1 \implies [\alpha] \ge 1$.
This~is especially meaningful if the functionals $\beta_1, \dots, \beta_r$ are themselves valid information inequalities, so that the constraints correspond to extreme position assumptions.

The Koteljanskii inequality \eqref{eq: kotel} is of special importance: it shows that the \emph{Koteljanskii ratios} $$[I:J\mid K] \defas \frac{[I\cup K][J\cup K]}{[I\cup J\cup K][K]}$$ are absolutely bounded. We may assume without loss of generality that $I, J, K \subseteq N$ are pairwise disjoint. In information-theoretic language, this shows that the \emph{conditional mutual information} is nonnegative. These elementary information inequalities (and their nonnegative linear combinations) are also known as \emph{Shannon inequalities}.
The corresponding extreme position $[I \cup K][J \cup K] = [I\cup J\cup K][K]$ characterizes the \emph{conditional independence} $\CI{X_I,X_J|X_K}$, a relation commonly used in information theory and statistical modeling~\cite{Studeny}. Therefore, conditional independence assumptions can be encoded in the constraints of \Cref{prob: main}, and this will indeed be our main use for them.

The Ingleton functional $[\varphi]$ from \eqref{eq: ingleton} is not absolutely bounded and is therefore not a valid information inequality for Gaussian or discrete random variables. Conditions on discrete random variables, especially of conditional independence type, which make Ingleton valid have been thoroughly studied; cf.~\cite{ZhangYeung,KR,MatusI,CondIngleton,Milan1}. One reason for this interest is the fact that all valid conditional independence implications on four discrete random variables follow from the conditional Ingleton inequalities~\cite{CondIngleton}. In this sense, they completely resolve a low-dimensional instance of a problem that is undecidable in general \cite{KuehneYashfe,LiUndecidable}. We shall return to the Ingleton inequality in \Cref{sec: 1627} where we will prove \Cref{res: inf varphi_k}.

\subsection{Relaxations and stability}

The concept of an \emph{essentially conditional} information inequality was introduced by Kaced and Romashchenko~\cite{KR}, together with a proof of their existence. The main idea is that if
\begin{equation}
  \label{eq: cond ineq}
  [\beta_1] = \dots = [\beta_r] = 1 ~\implies~ [\alpha] \ge 1
\end{equation}
is a valid conditional information inequality, then it might follow from an unconditional inequality of the form
\begin{equation}
  \label{eq: uncond ineq 1}
  \PD_n \models [\alpha] \ge [\lambda_1 \beta_1 + \dots + \lambda_r \beta_r] = [\beta_1]^{\lambda_1} \cdots [\beta_r]^{\lambda_r},
\end{equation}
for some positive real numbers $\lambda_i$. Indeed,~\eqref{eq: cond ineq}~is a straightforward consequence of~\eqref{eq: uncond ineq 1} and thus, one wishes to find this stronger version of the original inequality. If such multipliers exist, then the conditional inequality \eqref{eq: cond ineq} is called \emph{unconditional}, otherwise it is \emph{essentially conditional}. Note that \eqref{eq: uncond ineq 1} remains valid if each $\lambda_i$ is replaced by a larger number. Hence, we may assume in particular that all $\lambda_i$, if they exist, are equal and arbitrarily large. By taking the $\lambda$-th root of the inequality, we arrive at the following equivalent formulation.

\begin{problem}[Essential conditionality] \label{prob: ess cond}
Let $[\beta_1] = \dots = [\beta_r] = 1 \implies [\alpha] \ge 1$ be a valid conditional information inequality. Decide if there exists $\mu > 0$ such that $\PD_n \models [\alpha]^\mu \ge \prod_{i=1}^r [\beta_i]$.
\end{problem}

Consider a valid, essentially conditional inequality for Gaussians. By \Cref{thm: Chan transfer}, this inequality is either invalid or essentially conditional when interpreted for discrete random variables. %
The Gómez--Mejía--Montoya (GMM) problem asks whether the valid inequality
\begin{equation}
  \label{eq: GMM}
  [1:3\mid 4] = [1:4\mid 3] = [2:3\mid 4] = [2:4\mid 3] = 1 \implies [\varphi] \ge 1
\end{equation}
is essentially conditional for four discrete random variables. This question originates from~\cite{GMM}, where it is shown that an affirmative answer implies that the convex cone of valid information inequalities for four discrete random variables is not semialgebraic.
Thanks to Chan's transfer principle, their proof translates to the Gaussian setting.
If \eqref{eq: GMM} is essentially conditional for Gaussians, it must be essentially conditional for discrete random variables as well, and thus both of their cones of information inequalities would not be semialgebraic for $n \ge 4$.

Thus, the Gaussian version of the GMM problem provides a testing ground for the same strategy: if the conditional Ingleton inequality were essentially conditional for Gaussians, then the Gaussian cone would inherit the same kind of non-semialgebraicity phenomenon. Unfortunately, we show in \Cref{sec: 1627} that \eqref{eq: GMM} is in fact unconditional for Gaussians. Nonetheless, the analysis reveals a different reason for why $\CC A_4$ is not semialgebraic over~$\QQ$, but this has no bearing on the GMM problem for discrete random variables.

\begin{remark}[On computational complexity]
\Cref{prob: valid,prob: cond valid} are instances of \Cref{prob: main}, and therefore decidable.
When the parameter $\mu$ is fixed and rational, then \Cref{prob: ess cond} is also decidable. If $\mu$ is not fixed, this problem concerns the truth of $\exists\forall$ formulas in the theory of $\RR_{\exp}$, the field of real numbers with the exponential function \cite{MarkerExp}. Whether this theory is decidable is still an open problem linked to a weak version of Schanuel's conjecture; see~\cite{RealExponential,SchanuelAlgo}. %
\end{remark}

To summarize, \Cref{prob: valid} identifies absolute boundedness with the validity of Gaussian information inequalities, while \Cref{prob: cond valid} explains the role of the constraints in \Cref{prob: main}. The notion of essential conditionality will be used in \Cref{sec: cone} to prove non-polyhedrality, whereas the GMM family $\varphi_k$ will be studied directly in \Cref{sec: 1627} through sharp optimization over~$\PD_4$.

\section{The cone of Gaussian information inequalities}
\label{sec: cone}

The goal of this section is to prove \Cref{res: A4 non-polyhedral}, namely that the cone $\CC A_4$ of absolutely bounded ratios of principal minors is not polyhedral. The proof has three steps. First, we recall the so-called \emph{Copy lemma} in the context of positive definite matrices. This lemma has been used to prove several different inequalities, including an infinite family of inequalities due to Matúš, which is presented below. In particular, the well-known \emph{Zhang--Yeung inequality} is a special case of these inequalities. Although it is not necessary in itself, we demonstrate the use of the Copy~lemma by proving this inequality.

Second, we show that Matúš's family of inequalities implies a conditional version of the Ingleton inequality. Using this fact, we then prove that this conditional inequality is essentially conditional. A standard argument of Kaced and Romashchenko finally converts essential conditionality into non-polyhedrality of $\CC A_4$, as desired.

\subsection{Copy constructions and the Matúš family}

\begin{lemma}[Copy lemma] \label{lemma: copy}
Let $K,L\subseteq N$ be two disjoint sets and let $\Sigma \in \PD_{|K \cup L|}$. Let $K'$ be any set disjoint from $K \cup L$ with $|K'| = |K|$. Then there exists a unique $\Sigma' \in \PD_{|K \cup L \cup K'|}$ such that $\Sigma'_{K \cup L} = \Sigma = \Sigma'_{K'\cup L}$ and $\Sigma'[K:K'\mid L] = 1$.
\end{lemma}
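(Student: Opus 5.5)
We construct $\Sigma'$ explicitly as the covariance matrix of a Gaussian conditional product, and then show that the constraints leave no freedom. Write the blocks of $\Sigma$ as $\Sigma_{KK}$, $\Sigma_{KL}$, $\Sigma_{LL}$; all of them are positive definite or well defined because $\Sigma \in \PD_{|K\cup L|}$. Identify $K'$ with $K$ through a fixed bijection. The matrix $\Sigma'$ has rows and columns indexed by $K \cup L \cup K'$. We require $\Sigma'_{K\cup L} = \Sigma$ and $\Sigma'_{K'\cup L} = \Sigma$. This fixes every block of $\Sigma'$ except the off-diagonal block $\Sigma'_{KK'}$. We set
\[
  \Sigma'_{KK'} \defas \Sigma_{KL}\,\Sigma_{LL}^{-1}\,\Sigma_{LK}.
\]
This is the covariance of two copies of $X_K$ that are conditionally independent given $X_L$.

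\textbf{Positive definiteness.} We take the Schur complement of $\Sigma'_{LL} = \Sigma_{LL}$ in $\Sigma'$. On the index set $K \cup K'$ it equals
\[
  \begin{pmatrix} S & \Sigma'_{KK'} - \Sigma_{KL}\Sigma_{LL}^{-1}\Sigma_{LK} \\ \ast & S \end{pmatrix}
  = \begin{pmatrix} S & 0 \\ 0 & S\end{pmatrix},
\]
where $S = \Sigma_{KK} - \Sigma_{KL}\Sigma_{LL}^{-1}\Sigma_{LK}$ is the Schur complement of $\Sigma_{LL}$ in $\Sigma$. Since $\Sigma$ is positive definite, $S$ is positive definite. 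A symmetric matrix whose block $\Sigma_{LL}$ and the corresponding Schur complement are both positive definite is itself positive definite, so $\Sigma' \in \PD$.

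\textbf{The Koteljanskii ratio equals one.} We evaluate the minors through the Schur determinant formula $\det \Sigma'_{A\cup L} = [L]\cdot\det(\text{Schur complement of } L \text{ in } \Sigma'_{A\cup L})$. This gives $[K\cup L] = [L]\det S$ and $[K'\cup L] = [L]\det S$. For the full matrix, the complement is block diagonal, so $[K\cup K'\cup L] = [L](\det S)^2$. Substituting,
\[
  \Sigma'[K:K'\mid L] = \frac{[K\cup L]\,[K'\cup L]}{[K\cup K'\cup L]\,[L]} = \frac{[L]^2(\det S)^2}{[L]^2(\det S)^2} = 1.
\]
The edge case $L=\emptyset$ follows from the same computation with $[\emptyset]=1$ and $S = \Sigma_{KK}$.

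\textbf{Uniqueness.} This is the main point that needs care. Let $\Sigma'$ be any matrix satisfying the three requirements, and write its unknown block as $\Sigma'_{KK'} = \Sigma_{KL}\Sigma_{LL}^{-1}\Sigma_{LK} + C$. The Schur complement of $L$ on $K \cup K'$ is then
\[
  M = \begin{pmatrix} S & C \\ C^{T} & S\end{pmatrix}.
\]
The requirement $\Sigma'[K:K'\mid L]=1$ becomes $\det M = (\det S)^2$. Apply the Schur complement formula again, now eliminating the first block:
\[
  \det M = \det S \cdot \det\bigl(S - C^{T} S^{-1} C\bigr).
\]
Because $M$ is positive definite, the matrix $S - C^{T}S^{-1}C$ is positive definite, and it is dominated by $S$ in the Loewner order, since $C^{T}S^{-1}C \succeq 0$. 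Loewner domination between positive definite matrices gives $\det(S - C^{T}S^{-1}C) \le \det S$, with equality only when the two matrices coincide. Equality here therefore forces $C^{T}S^{-1}C = 0$. As $S^{-1}$ is positive definite, this implies $C = 0$. Hence the block $\Sigma'_{KK'}$ is uniquely determined, and so is $\Sigma'$.
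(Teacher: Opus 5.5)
Your proof is correct. The paper gives no proof of this lemma: it is recalled as the positive definite version of the copy lemma from information theory, where one takes a conditional product. So there is no argument in the paper to compare against beyond that implicit idea.

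Your choice $\Sigma'_{KK'} = \Sigma_{KL}\Sigma_{LL}^{-1}\Sigma_{LK}$ is exactly the covariance of the Gaussian conditional product of $X_{K\cup L}$ with a copy of $X_K$ given $X_L$. Your Schur-complement computations then verify positive definiteness and $\Sigma'[K:K'\mid L]=1$ directly.

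Your uniqueness argument is also sound, and it is the one part that needs a real idea. You reduce $\det M = (\det S)^2$ to the equality case of $\det(S - C^{T}S^{-1}C) \le \det S$ for $S \succeq S - C^{T}S^{-1}C \succ 0$, which forces $C^{T}S^{-1}C = 0$ and hence $C=0$. This is precisely the equality case of the Koteljanskii inequality: in the Gaussian setting, conditional independence together with the two prescribed marginals determines the joint covariance.

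The paper's applications, such as the mechanical Zhang--Yeung derivation, only use the existence half of the lemma.
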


This result embeds every positive definite matrix into a larger matrix which satisfies an extreme position relation depending on the partition~$K \cup L$. The Koteljanskii inequalities for $\Sigma'$ are linear inequalities in the logarithmic principal minors $(\log\Sigma'[J] : J \subseteq K \cup L \cup K')$. The~Copy~lemma also guarantees linear equations in these coordinates. Hence, linear programming can be used to find linear inequalities which constrain the logarithmic principal minors of $\Sigma'$ and hence those of~$\Sigma$; see~\cite{DFZ11,Undiscovered}. This lift-and-project technique finds inequalities for $n \times n$ matrices which are not provable using $n \times n$ Koteljanskii inequalities.
The Zhang--Yeung inequality from~\cite{ZhangYeung} is a famous example; it~was independently proved for Gaussians by Lněnička \cite{Lnenicka}, who also characterized the matrices in extreme position with respect to this inequality. We present here an entirely mechanical proof of this inequality which was found by a linear programming solver. The complete source code of this computation is contained in our Zenodo~record (\textsf{zhang-yeung-lp.nb}).

\begin{example}[Zhang--Yeung inequality]
We want to prove the following inequality:
\begin{equation}
  \label{eq: zy}
  \PD_4 \models [\varphi] \cdot [1:4\mid 3] \cdot [1:3\mid 4] \cdot [3:4\mid 1] \ge 1.
\end{equation}
Let $\Sigma \in \PD_4$ be arbitrary. Its rows and columns are indexed by $\{1,2,3,4\}$. Invoke \Cref{lemma: copy} with $L = \{3,4\}$ to create copies $1', 2'$ of the indices $1, 2$ while keeping $3$ and $4$ fixed. This yields $\Sigma' \in \PD_6$ indexed by $\{1,2,3,4,1',2'\}$. Koteljanskii ratios are absolutely bounded, so the product of the following thirteen Koteljanskii ratio powers is absolutely bounded:
\begin{equation}
  \label{eq: zy1}
  \begin{gathered}
  [1:2\mid 1'],\;\; [1:1'\mid 3],\;\; [1:1'\mid 4],\;\; [1:1'\mid 234],\;\; [1:2'\mid 1'34]^3,\;\; [1':2\mid 3],\;\; [1':2\mid 4], \\
  [1':2\mid 134],\;\; [2:2'\mid 11'34]^3,\;\; [3:4\mid 11'],\;\; [3:4\mid 1'2],\;\; [1':3\mid 124],\;\; [1':4\mid 12].
  \end{gathered}
\end{equation}
Due to the Copy lemma, the following expressions all evaluate to one on $\Sigma'$:
\begin{equation}
  \label{eq: zy2}
  \left(\frac{[34][11'22'34]}{[1234][1'2'34]}\right)^{\!3},\;\; \frac{[1']}{[1]},\;\; \left(\frac{[13]}{[1'3]}\right)^{\!2},\;\; \left(\frac{[14]}{[1'4]}\right)^{\!2},\;\; \left(\frac{[1'34]}{[134]}\right)^{\!3}.
\end{equation}
Multiplying all terms of \eqref{eq: zy1} and \eqref{eq: zy2} together causes cancellations, all principal minors involving the copy variables $1'$ and $2'$ vanish and the Zhang--Yeung inequality~\eqref{eq: zy} remains. Since this expression is $\ge 1$ on $\Sigma'$ but only involves principal minors of $\Sigma$, it must also be $\ge 1$ on $\Sigma$.
\end{example}

The preceding example has an infinite analogue. Indeed, in order to prove that the convex cone of information inequalities for discrete random variables is not polyhedral, Matúš \cite{MatusInfinite} has generalized the Zhang--Yeung inequality through inductive use of the Copy~lemma. The resulting family of inequalities holds for discrete random variables and each of them is balanced. Hence, Chan's transfer principle implies that they also hold for Gaussians. Thus we obtain:

\begin{theorem}[{\cite[Corollary~3]{MatusInfinite}}] \label{thm: Matus infinite}
For every $s \geq 0$ the following holds:
\begin{equation}
  \label{eq: Matus}
  \PD_4 \models [\varphi]^s \cdot [1:4\mid 3] \cdot \big( [1:3\mid 4] \cdot [3:4\mid 1] \big)^{\frac{s(s+1)}{2}} \ge 1.
\end{equation}
\end{theorem}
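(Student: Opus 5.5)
The proof is an induction on $s$ that iterates the Copy lemma, in the spirit of the Zhang--Yeung example. The statement is Matúš's and can be transported through \Cref{thm: Chan transfer}, but I would give a self-contained Gaussian argument. Work additively: write $I(A;B\mid C)=\log[A:B\mid C]\ge 0$ and $\mathrm{Ing}=\log[\varphi]$. The claim is then
\[
s\,\mathrm{Ing} + I(1;4\mid 3) + \tfrac{s(s+1)}{2}\bigl(I(1;3\mid 4)+I(3;4\mid 1)\bigr)\ge 0 .
\]
It suffices to prove this for integers $s\ge 0$. For fixed $\Sigma$ the left side is a quadratic $q(s)=as^2+bs+c$ with $a\ge 0$ and $c=I(1;4\mid 3)\ge 0$. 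If $q$ is nonnegative at all integers, it is nonnegative on each interval $[m,m+1]$ whenever $a=0$ (there $q$ is linear). When $a>0$ the interpolation is not automatic, so I would instead prove the inequality directly for real $s$ by keeping $s$ as a parameter in the final step; see below.

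The base case $s=0$ is just $I(1;4\mid 3)\ge 0$, Koteljanskii \eqref{eq: kotel}. The case $s=1$ is \eqref{eq: zy}, already proved above.

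For the inductive step I would follow Matúš's construction. Apply \Cref{lemma: copy} with $L=\{3,4\}$ and a copy of $\{1\}$ only (or of $\{1,2\}$, as in the example). This gives $\Sigma'$ on $\{1,2,3,4,1'\}$ with $\Sigma'_{1'34}=\Sigma_{134}$ and $I(1';12\mid 34)=0$. Now apply the induction hypothesis at level $s$ to a suitable $4\times4$ principal submatrix of $\Sigma'$, for instance the one on $\{1,2,3,1'\}$ with $1'$ in the role of $4$, or with $1'$ replacing $4$ in the conditioning. The copy equalities turn its terms into quantities of $\Sigma$ plus cross terms in $1'$. Those cross terms are then absorbed by a fixed small set of Koteljanskii ratios on $\Sigma'$, exactly as in \eqref{eq: zy1}, together with the copy identities of \eqref{eq: zy2}. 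Mechanically, I would feed the LP solver the level-$s$ inequality with symbolic $s$, the Shannon inequalities on five variables, and the copy equalities. I expect the certificate to have multipliers affine in $s$, which then yields level $s+1$. The increments $\frac{(s+1)(s+2)}{2}-\frac{s(s+1)}{2}=s+1$ on the $I(1;3\mid4)+I(3;4\mid1)$ coefficient and $1$ on the $\mathrm{Ing}$ coefficient are what the certificate must produce.

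The main obstacle is finding the right substitution in the inductive step. We need a $4$-subset of $\Sigma'$ whose level-$s$ inequality, after the copy identities, reproduces $\mathrm{Ing}(\Sigma)$ once more, the original $I(1;4\mid3)$ term, and a linearly growing cost in $I(1;3\mid4)+I(3;4\mid1)$. I would solve for this by running the LP for $s=1,2,3$, reading off the pattern, and verifying the general certificate by hand. If a direct Gaussian certificate proves elusive, the fallback is the paper's own route: cite \cite[Corollary~3]{MatusInfinite} for discrete variables. Each inequality is balanced, since it is a combination of the balanced $\varphi$ and Koteljanskii ratios, so \Cref{thm: Chan transfer} transfers it to Gaussians for every $s\ge 0$.
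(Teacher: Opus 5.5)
As a self-contained proof, your proposal has a genuine gap. Only your fallback actually proves the statement: Matúš's inequality for discrete variables, balancedness of $\varphi$ and of the Koteljanskii ratios, and \Cref{thm: Chan transfer}. That fallback is exactly the paper's argument.

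The Gaussian induction never supplies the inductive step, which is where all the content lies. You do not fix the extension. You do not fix which $4\times 4$ submatrix of $\Sigma'$, with which indices in which roles, receives the level-$s$ hypothesis. You do not give the Koteljanskii ratios and copy identities that are supposed to cancel the cross terms in $1'$. You only predict that an LP certificate with multipliers affine in $s$ will turn up. Whether such a certificate exists for the copy you describe is precisely what has to be proved, and nothing in the proposal establishes it. Also, if the quadruple that works contains a grouped variable such as $\{4,1'\}$, a hypothesis stated on $\PD_4$ does not apply to it. You would then have to run the induction over block positive definite matrices.

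Second, even a correct step from level $s$ to level $s+1$ only propagates from base cases. Koteljanskii and \eqref{eq: zy} give only $s=0$ and $s=1$, so the induction yields integer $s$ and nothing more; ``keeping $s$ as a parameter'' does not change this. Non-integer $s$ genuinely needs more. Write $q_s$ for your left-hand side and $J = I(1;3\mid 4)+I(3;4\mid 1)\ge 0$. Then $q_s = (1-s)\,q_0 + s\,q_1 - \tfrac{s(1-s)}{2}\,J$. For $0<s<1$ the level-$s$ inequality is therefore stronger than what convexity gives from the two base cases. As you noted yourself, validity at all integers does not force validity in between for a convex quadratic. So your opening claim that integers suffice is false and should go. For non-integer $s$ you are relying on the cited result, exactly as the paper does. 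The paper only uses the theorem to derive \eqref{eq: Matus cond} by letting $s\to\infty$, and for that integer $s$ already suffices.
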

Each functional is a point in $\CC A_4 \subseteq \RR^{16}$ and as $s \geq 0$ varies, it traces out a quadratic curve inside~$\CC A_4$. To show that $\CC A_4$ is not polyhedral, it suffices to show that this curve approaches the boundary of $\CC A_4$ arbitrarily well, so that the boundary cannot be linear. Matúš has done this in the Shannon setting by constructing a suitable family of discrete probability distributions, but his proof does not transfer to the Gaussian setting.

\subsection{Essential conditionality}

Since, Matúš's proof does not transfer to the Gaussian setting, we instead follow an insight of Kaced and Romashchenko \cite{KR}, who explained that this is in fact a question about essential conditionality (\Cref{prob: ess cond}). Namely, the sequence of unconditional inequalities from \Cref{thm: Matus infinite} implies the conditional inequality
\begin{equation}
  \label{eq: Matus cond}
  [1:3\mid 4] = [3:4\mid 1] = 1 \implies [\varphi] \ge 1.
\end{equation}
Because if not then there exists $\Sigma \in \PD_4$ with $\Sigma[1:3\mid 4] = \Sigma[3:4\mid 1] = 1$ and $\Sigma[\varphi] < 1$. For any $s > 0$, we still have \eqref{eq: Matus}, hence $\Sigma[\varphi]^s \cdot \Sigma[1:4\mid 3] \ge 1$. However, this is absurd since the product converges to zero as $s \to \infty$.

If the valid conditional inequality \eqref{eq: Matus cond} is essentially conditional, it cannot be ``linearized'' via Lagrange multipliers, implying that $\CC A_4$ is non-polyhedral. Before explaining the details of this argument, we prove essential conditionality.

\begin{lemma} \label{lemma: Matus ess cond}
The conditional inequality \eqref{eq: Matus cond} is essentially conditional.
\end{lemma}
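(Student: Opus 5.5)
The plan is to refute, for every fixed $\mu>0$, the unconditional inequality $\PD_4 \models [\varphi]^\mu \ge [1:3\mid 4]\cdot[3:4\mid 1]$ from \Cref{prob: ess cond}. We do this with a single smooth curve $\Sigma_\varepsilon$ in $\PD_4$ along which the two sides behave to different orders in $\varepsilon$. Pass to logarithms and set
\[
f(\Sigma) = \log \Sigma[\varphi], \qquad g(\Sigma) = \log \Sigma[1:3\mid 4] + \log \Sigma[3:4\mid 1].
\]
The inequality to be refuted reads $\mu f \ge g$ on $\PD_4$. By the Koteljanskii inequality \eqref{eq: kotel}, $g \ge 0$ on all of $\PD_4$, and $g$ vanishes exactly on the conditional independence variety $\CC M = \{\Sigma[1:3\mid 4] = \Sigma[3:4\mid 1]=1\}$.

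The key observation is that $g$ is a smooth nonnegative function on the open set $\PD_4$. Hence at every $\Sigma_0 \in \CC M$ it attains a local minimum, so its gradient vanishes there. Consequently $g(\Sigma_0 + \varepsilon D) = O(\varepsilon^2)$ for every direction $D$.

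Now suppose we have a point $\Sigma_0\in\CC M$ with $\Sigma_0[\varphi]=1$ (so $f(\Sigma_0)=0$) and $\nabla f(\Sigma_0)\neq 0$. Take a symmetric direction $D$ with $\langle \nabla f(\Sigma_0), D\rangle = -c<0$. Then, for small $\varepsilon>0$,
\[
\mu f(\Sigma_0+\varepsilon D) = -\mu c\,\varepsilon + O(\varepsilon^2) < 0 \le g(\Sigma_0+\varepsilon D).
\]
This violates the inequality for that $\mu$. Since $\Sigma_0$ and $D$ do not depend on $\mu$, this works for all $\mu>0$ at once, giving essential conditionality.

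The main obstacle is exhibiting such a $\Sigma_0$. It must be a point of $\CC M$ where the conditional inequality \eqref{eq: Matus cond} is tight, and at which $f$ is not stationary in the ambient space. Note that $f$ restricted to $\CC M$ is automatically stationary at any such point, since $f\ge 0$ on $\CC M$. So $\nabla f(\Sigma_0)$ must be a nonzero normal vector to $\CC M$.

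I would first parametrize $\CC M$ explicitly. The condition $\CI{X_3,X_4|X_1}$ fixes $\sigma_{34}=\sigma_{13}\sigma_{14}/\sigma_{11}$. With this substitution, $\CI{X_1,X_3|X_4}$ becomes a polynomial equation; generically it forces $\sigma_{13}=0$, which in turn gives $\sigma_{34}=0$. On the component $\sigma_{13}=\sigma_{34}=0$, I would look for a subfamily where $[\varphi]=1$, for instance by additionally imposing a vanishing condition such as $\sigma_{12}=0$ or $\CI{X_1,X_2|X_3}$-type relations that make Ingleton an equality. I would then compute the partial derivatives of $f$ in the entries $\sigma_{13},\sigma_{34}$ transversal to $\CC M$ at a concrete rational point. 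This is a finite symbolic computation.

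If every point of $\CC M$ where $[\varphi]=1$ turns out to have $\nabla f=0$, I would switch to the boundary approach. In that case I would take a curve $\Sigma_\varepsilon$ approaching a singular matrix, with $\Sigma_\varepsilon$ given as a power series in $\varepsilon$. I would then compare the leading exponents of $\log[\varphi]$ (negative, of order $\varepsilon^a$) and of $g$ (of order $\varepsilon^b$ with $b>a$), using series expansions as indicated in the introduction.
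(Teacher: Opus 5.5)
\textbf{There is a genuine gap: the interior point your main argument needs does not exist.} By scale invariance, work in the paper's correlation coordinates $a,\dots,f$; I use $f=\sigma_{34}$ as in the paper and write $\log[\varphi]$ for your $f$. The constraints read $b=cf$ and $f=bc$, so $b(1-c^2)=0$. Hence $\CC M=\{b=f=0\}$ exactly, not just generically. On $\CC M$,
\[
  \Sigma[\varphi]=\frac{(1-d^2)(1-e^2)}{(1-a^2)(1-d^2-e^2)}=\frac{1}{1-a^2}\Bigl(1+\frac{d^2e^2}{1-d^2-e^2}\Bigr),
\]
so \eqref{eq: Matus cond} is tight precisely when $a=b=f=0$ and $de=0$. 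At every such point the transversal derivatives of $\log[\varphi]$ also vanish:
\begin{itemize}
\item $b$ enters only through $[13]=1-b^2$ and $[134]$;
\item $f$ enters only through $[34]=1-f^2$, $[134]$ and $[234]$;
\item $\partial_b[134]=2(cf-b)$, $\partial_f[134]=2(bc-f)$ and $\partial_f[234]=2(de-f)$ all vanish there.
\end{itemize}
The tangential directions vanish by minimality on $\CC M$, and the diagonal directions vanish by balance. So $\nabla\log[\varphi](\Sigma_0)=0$ at every admissible $\Sigma_0$. No direction $D$ exists, and the symbolic computation you plan would return zeros. Near such points both $\mu\log[\varphi]$ and $g$ are quadratic, so the linear-versus-quadratic comparison your argument rests on is not available.

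\textbf{Your fallback is only a template.} In outline it is the paper's route, but exhibiting the curve is the entire content of the lemma, and you give none. The missing idea is that the base point must drift to the boundary of $\PD_4$ as $\mu\to0$. The paper observes that both constraints can be nearly satisfied with $b,f\neq0$ by letting $c=\sigma_{14}\to1$. It builds a curve with $b=cf$ (so $[1:3\mid 4]=1$ identically) and $c\to1$, and checks by series expansion that $\Sigma(\mu)[\varphi]^\mu\,\Sigma(\mu)[1:3\mid 4]\,\Sigma(\mu)[3:4\mid 1]=1-\mu^5+\Oh(\mu^6)$.

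Your local idea can be repaired along these lines at second order. Take $a=0$, $b=cf$, $e=df$ with $0<c,d<1$. The matrix is positive definite whenever $c^2d^2f^2<(1-c^2)(1-d^2)$, and
\[
  \Sigma[\varphi]^\mu\,\Sigma[1:3\mid 4]\,\Sigma[3:4\mid 1]=\frac{(1-c^2f^2)^{1+\mu}(1-d^2f^2)^{\mu}}{(1-f^2)^{1+\mu}},
\]
whose logarithm is $\bigl((1+\mu)(1-c^2)-\mu d^2\bigr)f^2+\Oh(f^4)$. For given $\mu>0$, choose $1-c^2<\mu d^2/(1+\mu)$ and then $f$ small; this gives the required counterexample. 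The needed $c$ tends to $1$ as $\mu\to0$, consistent with the paper's curve degenerating to the boundary.

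\textbf{A smaller point about the target.} The inequality you set out to refute, $\mu\log[\varphi]\ge g$, fails at any single $\Sigma$ with $\Sigma[\varphi]<1$, and such $\Sigma$ exist because $[\varphi]$ is not absolutely bounded. Refuting it therefore proves nothing; the displayed form in the problem statement should be read as $[\alpha]^\mu\prod_i[\beta_i]\ge 1$. The meaningful statement, which the paper proves via \eqref{eq: Matus uncond} and which the non-polyhedrality argument uses, is that $\mu\log[\varphi]+g\ge0$ fails for every $\mu>0$. Your first-order argument would have covered this form too, so this is not where the gap lies.
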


\begin{proof}
The goal is to construct matrices $\Sigma(\mu) \in \PD_4$ which satisfy
\begin{equation}
  \label{eq: Matus uncond}
  \Sigma(\mu)[\varphi]^\mu \cdot \Sigma(\mu)[1:3\mid 4] \cdot \Sigma(\mu)[3:4\mid 1] < 1
\end{equation}
for every $\mu > 0$. Clearly, it is sufficient to find these matrices for arbitrarily small $\mu>0$. The derivation of such a family of matrices was aided by symbolic computation in {Mathematica}. The protocol is available in our Zenodo record (\textsf{non-polyhedral.nb}). Note that once the family is found, the proof's verification requires only elementary power series expansions of analytic functions.

We fix the notation
\[
  \Sigma(\mu) = \begin{pmatrix}
  1 & a & b & c \\ a & 1 & d & e \\ b & d & 1 & f \\ c & e & f & 1
  \end{pmatrix},
\]
where each entry is implicitly a function of~$\mu$. To achieve \eqref{eq: Matus uncond}, it is helpful to approach the locus where $[1:3\mid 4] = [3:4\mid 1] = 1$. This locus is characterized by the equations $b = cf$ and $f = bc$. Since, by positive definiteness, every off-diagonal entry lies in the interval $(-1, 1)$, all solutions to these equations have $b = f = 0$ which implies~$[\varphi] \ge 1$. However, the equations can be approximately satisfied in a different way which allows $[\varphi] < 1$: by letting $c \to 1$. Considerations and experiments of this kind lead to the following ansatz:
\begin{gather*}
  b = c f, \quad c = 1 - \eps, \quad f = \eps^2, \\
  d = \delta, \quad e = 1 - \delta, \quad a = b^2,
\end{gather*}
where $\eps > 0$ is sufficiently small and $\delta$ is an additional parameter. The inequality \eqref{eq: Matus uncond} with $\mu = 1$ subject to $\Sigma(\eps, \delta) \in \PD_4$ is a semialgebraic problem in two parameters which {Mathematica}'s \mintinline{mathematica}{GenericCylindricalDecomposition} easily solves. It can be satisfied provided that $\eps > 0$ is small enough and $\delta$ is between the \nth{3} and \nth{4} real roots of the polynomial
\begin{align*}
    p(x) &= \varepsilon(1-\varepsilon^{2})(1+\varepsilon^{2}(1-\varepsilon)^{2})
\left(2x(1-x^{2})-\varepsilon\right)
\\
&\;\;\;\;\;+x^{2}(1-x^{2})\!\left(
-1+2\varepsilon^{2}-6\varepsilon^{3}+2\varepsilon^{4}+2\varepsilon^{5}-\varepsilon^{6}
+(1-\varepsilon^{2}(1-\varepsilon)^{2})x^{2}
\right).
\end{align*}
These roots both converge to~$1$ as $\eps \to 0$. Using \mintinline{mathematica}{AsymptoticSolve} in {Mathematica}, we find power series representations of the roots of $p$ near $x=1$ as functions of $\eps$ around~$0$. It turns out that the polynomial $\delta = 1 - \frac12 \eps$ stays between these two roots. This defines a curve $\Sigma(\eps) \in \PD_4$ such that as $\eps \to 0$ the inequality \eqref{eq: Matus uncond} with $\mu = 1$ is eventually true. It remains to couple $\eps$ to $\mu$ and to check if the resulting curve proves \eqref{eq: Matus uncond} for arbitrarily small~$\mu$. With $\eps = \mu^2$ one can check~that
\begin{align*}
  \Sigma(\mu)[12] &= 1 - \mu^8 + \Oh(\mu^{10}), \\
  \Sigma(\mu)[123] &= \mu^2 - \frac54 \mu^4 + \Oh(\mu^{6}), \\
  \Sigma(\mu)[1234] &= \mu^4 + 3\mu^6 + \Oh(\mu^{8}), \\
  \Sigma(\mu)[\varphi]^\mu \cdot \Sigma(\mu)[1:3\mid 4] \cdot \Sigma(\mu)[3:4\mid 1] &= 1 - \mu^5 + \Oh(\mu^6). \qquad\qquad
\end{align*}
Hence for every sufficiently small $\mu > 0$ the matrix $\Sigma(\mu)$ is positive definite and satisfies~\eqref{eq: Matus uncond}.
\end{proof}

\subsection{Non-polyhedrality}

\begin{theorem} \label{thm: non polyhedral}
The convex cone $\CC A_4$ is not polyhedral. Hence, $\CC A_n$ for $n \ge 4$ is not polyhedral.
\end{theorem}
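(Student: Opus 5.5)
We argue by contradiction, following the Kaced--Romashchenko argument. Suppose $\CC A_4$ is polyhedral. Then it is the intersection of finitely many closed half-spaces, i.e., there are finitely many functionals $\gamma_1,\dots,\gamma_m \in \RR^{2^4}$ such that $\alpha \in \CC A_4$ if and only if $\langle \gamma_j, \alpha\rangle \ge 0$ for all $j$. (Note $\CC A_4$ is closed, since $\Sigma[\alpha]\ge 1$ is a closed condition in $\alpha$ for each fixed $\Sigma$.) Write $\psi = \beta_1+\beta_2$ with $\beta_1 = [1:3\mid 4]$, $\beta_2=[3:4\mid 1]$ viewed as set functions, and consider the ray-like family $\alpha_\lambda \defas \varphi + \lambda \psi$ for $\lambda\ge 0$. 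By \Cref{lemma: Matus ess cond}, no $\alpha_\lambda$ lies in $\CC A_4$: indeed $\alpha_\lambda \in \CC A_4$ would give $[\varphi]^{1/\lambda}\cdot[\beta_1][\beta_2]\ge 1$, i.e., a certificate with $\mu = 1/\lambda$. So for every $\lambda$ some $\langle\gamma_j,\varphi\rangle + \lambda\langle\gamma_j,\psi\rangle <0$; by finiteness and pigeonhole, a single $j$ works for arbitrarily large $\lambda$, forcing $\langle\gamma_j,\psi\rangle \le 0$, and if $=0$ then $\langle \gamma_j,\varphi\rangle<0$.

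Next use \Cref{thm: Matus infinite}: for each $s>0$ the point $\sigma_s \defas s\varphi + \beta_{14|3} + \tfrac{s(s+1)}{2}\psi$ lies in $\CC A_4$, where $\beta_{14|3}=[1:4\mid 3]$. Dividing by $s$, $\varphi + \tfrac1s\beta_{14|3} + \tfrac{s+1}{2}\psi \in \CC A_4$, so $\langle\gamma_j,\varphi\rangle + \tfrac1s\langle\gamma_j,\beta_{14|3}\rangle + \tfrac{s+1}{2}\langle\gamma_j,\psi\rangle \ge 0$ for all $s$. If $\langle\gamma_j,\psi\rangle<0$, letting $s\to\infty$ gives a contradiction since the last term tends to $-\infty$ linearly while the others stay bounded. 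If $\langle\gamma_j,\psi\rangle=0$, letting $s\to\infty$ yields $\langle\gamma_j,\varphi\rangle\ge 0$, contradicting the above. Hence $\CC A_4$ is not polyhedral. The conceptual heart is that the Matúš curve is quadratic in $s$, so after normalization its direction tends to $\psi$ while the $\varphi$-coefficient relative to $\psi$ goes to zero only like $1/s$; essential conditionality says the limiting cone direction cannot be cut off by a linear facet.

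For $n\ge4$: the map $\CC A_{n}\to\CC A_4$ given by restriction, or more cleanly the observation in the introduction that $\CC A_4$ is a linear projection (equivalently, the intersection with the coordinate subspace of functionals supported on $\Pow{\{1,2,3,4\}}$) of $\CC A_n$. I would use the section: $\alpha$ supported on subsets of $\{1,2,3,4\}$ is in $\CC A_n$ iff it is in $\CC A_4$, since principal submatrices of $\PD_n$ matrices range over all of $\PD_4$. A polyhedral cone intersected with a linear subspace is polyhedral, so polyhedrality of $\CC A_n$ would imply that of $\CC A_4$. The main subtlety I expect is just bookkeeping in the pigeonhole/limit step (choosing the facet $j$ uniformly); the hard analytic input is already \Cref{lemma: Matus ess cond}.
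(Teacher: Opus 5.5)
Your proof is correct and uses the same two inputs as the paper (\Cref{lemma: Matus ess cond} and \Cref{thm: Matus infinite}), but you run the Kaced--Romashchenko argument on the primal side rather than the dual side.

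The paper dualizes. It extends \eqref{eq: Matus cond} to the cone $\CC C_4$ spanned by $\gamma_4^*$, and uses polyhedrality to describe that cone by the finitely many generators of $\CC A_4$. Farkas' lemma then yields multipliers $\mu_1,\mu_2$, i.e.\ an unconditional version of \eqref{eq: Matus cond}, contradicting essential conditionality.

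You instead take a facet description $\langle\gamma_j,\cdot\rangle\ge 0$ of $\CC A_4$ itself. Essential conditionality plus pigeonhole isolates one facet cutting off $\varphi+\lambda\psi$ for arbitrarily large $\lambda$. You then eliminate that facet with the normalized Matúš points $\varphi+\frac1s\Delta_{14|3}+\frac{s+1}{2}\psi$ as $s\to\infty$.

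Your route is more elementary, since it needs neither duality nor Farkas' lemma. It is also slightly more careful. The facet normals $\gamma_j$ lie in the \emph{closed} dual cone, and you check the conditional inequality there directly via the Matúš family. The paper identifies the conic hull of $\gamma_4^*$ with $\{h:\langle\alpha_i,h\rangle\ge0\}$ without comment, although convexity alone only gives \eqref{eq: Matus cond} on finite conic combinations. In exchange, the paper's route makes the slogan ``essentially conditional $\Leftrightarrow$ no Lagrange multipliers'' explicit.

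Two small remarks:
\begin{itemize}
\item Since $\psi\in\CC A_4$, you automatically have $\langle\gamma_j,\psi\rangle\ge 0$, so the case $\langle\gamma_j,\psi\rangle<0$ never arises.
\item For $n\ge 4$, coordinate restriction does not map $\CC A_n$ into $\CC A_4$; for example, $[1][5]/[15]$ restricts to $[1]$. So ``projection'' and ``intersection with $V$'' are not equivalent. The section argument you actually use, $\CC A_4\cong\CC A_n\cap V$, is correct.
\end{itemize}
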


\begin{proof}
The general result follows from the $n = 4$ instance because $\CC A_n$ is a coordinate projection of $\CC A_{n+1}$ and polyhedral cones are closed under linear projections~\cite[Theorem~1.4]{Ziegler}.

The following argument connecting essential conditionality to non-polyhedrality is due to Kaced and Romashchenko \cite[Section~VI]{KR}.
The argument is most natural in the ambient vector space of functions $\Pow{\{1,2,3,4\}} \to \RR$, which we identify with~$\RR^{16}$. The cone $\CC A_4$ is dual to $\gamma_4^*$; so~the dual of $\CC A_4$ is the convex conic hull of $\gamma_4^*$ which we denote by~$\CC C_4$.
We need one more piece of notation. Let $\Delta_{ij|k}\colon \Pow{N} \to \RR$ be the set function corresponding to the Koteljanskii ratio $[\Delta_{ij|k}] = \frac{[ik][jk]}{[ijk][k]}$. The inequalities $\Sigma[1:3\mid 4] \ge 1$ and $\Sigma[3:4\mid 1] \ge 1$ for all $\Sigma \in \PD_4$ translate into $\langle \Delta_{13|4}, h\rangle \ge 0$ and $\langle \Delta_{34|1}, h\rangle \ge 0$ for all $h \in \gamma^*_4$. These linear inequalities naturally extend to all $h \in \CC C_4$ by convexity.

Hence, the conditional inequality \eqref{eq: Matus cond} also extends to conic combinations:
\begin{equation}
  \label{eq: extended cond}
  \langle \Delta_{13|4}, h\rangle = \langle \Delta_{34|1}, h\rangle = 0 \implies \langle \varphi, h\rangle \ge 0.
\end{equation}
If $\CC A_4$ is polyhedral, then its finitely many extreme rays $\alpha_1, \dots, \alpha_r \in \RR^{16}$ provide a polyhedral definition of $\CC C_4 = \Set{ h \in \RR^{16} : \langle \alpha_i, h\rangle \ge 0 }$. Then, by \eqref{eq: extended cond}, the functional $\langle\varphi, \blank\rangle$ is non-negative on the polyhedron $\Set{ h \in \CC C_4 : \langle \Delta_{13|4}, h\rangle = \langle \Delta_{34|1}, h\rangle = 0 }$. By the Farkas~lemma~\cite[Proposition~1.8]{Ziegler} there exist multipliers $\lambda_1, \dots, \lambda_r \ge 0$ and $\mu_1, \mu_2 \in \RR$ such that
\[
  \varphi = \sum_{i=1}^r \lambda_i \alpha_i + \mu_1 \Delta_{13|4} + \mu_2 \Delta_{34|1}.
\]
Let $\ol{\varphi} = \varphi + \lvert\mu_1\rvert \Delta_{13|4} + \lvert\mu_2\rvert \Delta_{34|1}$. Then $\langle \ol{\varphi}, h\rangle \ge \sum_{i=1}^r \lambda_i \langle\alpha_i,h\rangle \ge 0$ for all $h \in \CC C_4$. This shows that $\ol{\varphi}$ is a valid information inequality, contradicting \Cref{lemma: Matus ess cond}.
\end{proof}

\section{The Ingleton and GMM functionals}
\label{sec: 1627}

The goal of this section is a thorough analysis of the family of functionals
\begin{align*}
  [\varphi_k] &\defas [\varphi] \cdot \big( [1:3\mid 4] \cdot [1:4\mid 3] \cdot [2:3\mid 4] \cdot [2:4\mid 3]\big)^k \\
  &= \frac{([13][14][23][24])^{k+1} \cdot [34]^{4k+1}}{[12] \cdot ([3][4][134][234])^{2k+1}},
\end{align*}
for $k \ge 0$ on~$\PD_4$. The Ingleton functional is recovered as the special case~$\varphi = \varphi_0$. Note that $\Sigma[\varphi_k]$ is monotonically increasing in~$k$. We care about two special values: the infimum at $k=0$ and the value $k_*$ at which the functionals become absolutely bounded.

\begin{theorem} \label{thm: varphi_k}
The infimum of $[\varphi_k]$ over $\PD_4$ is given by
\[
  \begin{cases}
    \frac{(4k+4)^{4k+4}}{16 (4k+3)^{4k+3}}, & \text{if $0 \le k < k_*$}, \\
    1, & \text{if $k \ge k_*$},
  \end{cases}
\]
where $k_* \approx 0.59829$ is the unique positive root of the equation $(4k+4)^{4k+4} = 16 (4k+3)^{4k+3}$.
\end{theorem}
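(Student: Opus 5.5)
The functional $[\varphi_k]$ is balanced: for index $1$ the exponents are $2(k+1)-1-(2k+1)=0$, and similarly for the other indices. Hence it is invariant under $\Sigma\mapsto D\Sigma D$ with $D$ positive diagonal, and we may assume that $\Sigma$ is a correlation matrix. The proof splits into a lower bound $\Sigma[\varphi_k]\ge \min\{1,c_k\}$, where $c_k\defas\frac{(4k+4)^{4k+4}}{16(4k+3)^{4k+3}}$, and explicit curves approaching that value. The threshold is consistent with this shape: $c_0=\sfrac{16}{27}<1$, and $c_k$ increases through $1$ exactly at the root $k_*$. The value $1$ is approached trivially by $\Sigma\to I$, where every Koteljanskii ratio and $[\varphi]$ equal $1$. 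So the content of the theorem is (i) the inequality $\Sigma[\varphi_k]\ge\min\{1,c_k\}$, and (ii) a family of correlation matrices along which $\Sigma[\varphi_k]\to c_k$ when $k<k_*$.

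The key structural observation is that $[\varphi_k]$ never involves $[123]$, $[124]$ or $[1234]$. The only minor containing the entry $a=\Sigma_{12}$ is $[12]=1-a^2$, which sits in the denominator. So for fixed principal submatrices $\Sigma_{134}$ and $\Sigma_{234}$, I would minimise over the PD-completion interval of $a$. Write $c$ for the covariance of $X_1,X_2$ predicted through $\{3,4\}$, i.e.\ $\Sigma_{1,34}\Sigma_{34}^{-1}\Sigma_{34,2}$. Write $w_i=\Sigma[i34]/\Sigma[34]$ for the residual variances. Then the admissible $a$ form the interval $(c-\sqrt{w_1w_2},\,c+\sqrt{w_1w_2})$. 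The best choice is $a$ of least absolute value, which gives
\[
\sup [12] = 1-\bigl(\lvert c\rvert-\sqrt{w_1w_2}\bigr)_+^2 .
\]
In particular, if $0$ is admissible the factor $[12]$ is $1$, and then $[\varphi_k]\ge 1$ follows from the Koteljanskii-type factors. Otherwise the supremum is reached only at the boundary $\Sigma[1234]=0$, which already explains why the infimum is not attained.

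This reduces the problem to the two $3\times3$ correlation matrices on $\{1,3,4\}$ and $\{2,3,4\}$, sharing $r=\Sigma_{34}$. Their data are the correlations $(x_1,y_1)$ and $(x_2,y_2)$ of $1$ and $2$ with $3$ and $4$. All remaining minors are explicit polynomials in $x_i,y_i,r$. I would then exploit the $1\leftrightarrow2$ and $3\leftrightarrow4$ symmetries together with weighted AM--GM or log-concavity to show that the worst case is symmetric: $x_1=x_2=y_1=y_2=x$. This leaves a two-variable problem in $(x,r)$ on the region where $\lvert c\rvert>\sqrt{w_1w_2}$. A further one-dimensional minimisation should produce an expression of the form $\frac{1}{16}\,t^{-(4k+3)}(1+\dots)$ whose minimum over $t$ is exactly $c_k$. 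The shape of $c_k$, as a maximum of $t^{4k+3}(1-t)$ evaluated at $t=\frac{4k+3}{4k+4}$, strongly suggests this. The extremal curve comes out of the same computation: I would take the symmetric ansatz with the optimal $t$, let $r$ tend to the boundary, and pick $a$ at the end of its interval. Verifying $\Sigma[\varphi_k]\to c_k$ along it is a power-series check, as in \Cref{lemma: Matus ess cond}.

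The main obstacle is the symmetrisation and reduction step. One must show rigorously that, for every $k\ge0$, the several-variable function obtained after eliminating $a$ is minimised on the symmetric slice, with no interior critical point below $\min\{1,c_k\}$. I would first try to split $\log[\varphi_k]$ into a sum of terms each convex or Schur-convex in suitable substitutions, such as $x_i=\cos\theta_i$ or partial correlations. Failing that, I would fall back on a case analysis of the sign conditions on $c$, $w_i$ and $r$, followed by elementary but long polynomial inequalities. That second route is presumably the part that has been formalised in Lean. Finally, for $k\ge k_*$ the same bound gives $\Sigma[\varphi_k]\ge\min\{1,c_k\}=1$ with the infimum approached at the identity, which completes the case split in the statement.
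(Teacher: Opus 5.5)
Your reduction contains a false step, and it removes a whole case from the proof. You claim that whenever $a=0$ is admissible one has $[12]=1$, and that $[\varphi_k]\ge 1$ then follows ``from the Koteljanskii-type factors''. On that basis you analyse only the region $\lvert c\rvert>\sqrt{w_1w_2}$.

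At $k=0$ there are no such factors, so the claim says the Ingleton ratio is at least $1$ whenever $\Sigma_{12}=0$. That is false. Take the correlation matrix with $\Sigma_{12}=0$, $\Sigma_{13}=\Sigma_{14}=\Sigma_{23}=\Sigma_{24}=1/\sqrt3$ and $\Sigma_{34}=\tfrac12$. It is positive definite with determinant $\tfrac1{12}$, and $[13]=[14]=[23]=[24]=\tfrac23$, $[34]=\tfrac34$, $[134]=[234]=\tfrac5{12}$. Hence $[\varphi]=\tfrac{64}{75}<1$. Each of the four Koteljanskii ratios equals $\tfrac65$ there, so $[\varphi_k]=\tfrac{64}{75}(\tfrac65)^{4k}<1$ for all $k\le 0.2$.

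So the branch where $a=0$ is admissible (the paper's $xy<1$ branch) does go below $1$, and you must show it stays above $c_k$. The paper does this with real work. In its reduced coordinates $(s,x,y)$ it proves $[\varphi_0]\ge\tfrac34$ on that branch, and $[\varphi_{1/2}]\ge 1$ by an AM--GM estimate. Since $\log[\varphi_k]$ is affine in $k$, interpolation gives $[\varphi_k]\ge(\tfrac43)^{2k-1}\ge c_k$ for $0\le k\le\tfrac12$. Monotonicity in $k$ then gives $[\varphi_k]\ge 1$ for $k\ge\tfrac12$. Your plan has nothing in place of this.

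The symmetrisation, which you correctly call the main obstacle, is also unsupported, and the target you propose is not what the paper reaches. You never deal with signs. The paper first applies sign flips that preserve $\varphi_k$. When $bc\le 0$ or $de\le 0$, it passes to the Hadamard square $\Sigma^{\circ 2}$, which is positive definite by Schur's product theorem; this step needs the specific estimate $(1+f^2)\,g_k(b,c,f)\,g_k(d,e,f)\le 1$.

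Only for nonnegative entries does the paper symmetrise, and only within each row: $(b,c)\mapsto(\sqrt{bc},\sqrt{bc})$ and $(d,e)\mapsto(\sqrt{de},\sqrt{de})$. It shows the value does not increase using $\Sigma[134]\le\Sigma'[134]$ and a Cauchy--Schwarz/triangle-inequality comparison of the admissible $a$-intervals. It never imposes your slice $x_1=x_2=y_1=y_2$. The remaining asymmetry between rows $1$ and $2$ is handled analytically through $\tfrac s4(\sqrt x+\sqrt y)^2\ge s\sqrt{xy}$.

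You give no mechanism that maps an arbitrary matrix to your one-parameter symmetric slice without increasing the value, so that step is an assertion rather than an argument. The extremal curve is likewise only promised. The paper's curve is $b=c=d=e=t$, $a=t^{4-1/(k+1)}$, $f=t^{1/(k+1)}$ with $t\to 1^-$.
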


The proof \Cref{thm: varphi_k} is technical and we defer it to \Cref{sec: 1627 sym,sec: 1627 permsym,sec: 1627 proof}. Each of these sections indicates a file from our Lean formalization which proves the results of that section. To verify the proof, it suffices to compile the Lean project and to match the definitions and statements of the paper with their formalized forms (\textsf{GMM.Defs} and \textsf{GMM.Infimum}, respectively).
Before presenting the proof, we make some noteworthy observations. Our first is that \Cref{thm: varphi_k} actually answers in the positive the conjecture of Hall and Johnson \cite[Example~2]{HJ}.

\begin{corollary} \label{thm: 1627}
The infimum of the Ingleton functional $[\varphi]$ over $\PD_4$ is $\sfrac{16}{27}$. It is not attained.
\end{corollary}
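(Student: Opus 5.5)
The infimum value is the case $k=0$ of \Cref{thm: varphi_k}, and non-attainment needs a separate short argument.

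For the value, I will evaluate the formula of \Cref{thm: varphi_k} at $k=0$. First I check that $0<k_*$. The defining relation $(4k+4)^{4k+4} = 16(4k+3)^{4k+3}$ at $k=0$ compares $4^4=256$ with $16\cdot 27=432$. So $k=0$ lies in the first regime, and the infimum is
\[
\frac{4^4}{16\cdot 3^3} = \frac{256}{432} = \frac{16}{27}.
\]
Strictly, one should confirm that $k=0$ is on the correct side of $k_*$. Set $g(k) = (4k+4)\log(4k+4) - \log 16 - (4k+3)\log(4k+3)$. Then $g(0)<0$, and $k_*$ is the unique positive root of $g$, so $g<0$ on $[0,k_*)$. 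This is consistent with the theorem, which I take as given.

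For non-attainment, the natural argument uses the monotonicity of $\Sigma[\varphi_k]$ in $k$. Suppose some $\Sigma_0\in\PD_4$ had $\Sigma_0[\varphi] = \sfrac{16}{27}$. Every Koteljanskii ratio is at least $1$, so $\Sigma_0[\varphi_k] = \Sigma_0[\varphi]\cdot R^k$, where $R = \Sigma_0\bigl([1:3\mid 4][1:4\mid 3][2:3\mid 4][2:4\mid 3]\bigr)\ge 1$. I split into two cases.

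If $R>1$, I compare growth rates at $k=0$. The function $f(k) = \frac{(4k+4)^{4k+4}}{16(4k+3)^{4k+3}}$ has logarithmic derivative $4\log\frac{4k+4}{4k+3}$, which equals $4\log\frac43$ at $k=0$. Hence $f(k) = \sfrac{16}{27}\,e^{4\log(4/3)k+O(k^2)}$. This does not immediately contradict $\Sigma_0[\varphi_k] = \sfrac{16}{27}R^k \ge f(k)$, because the right-hand side need not be violated. So the monotonicity route alone is inconclusive here.

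The cleaner route is to use the structure of the proof of \Cref{thm: varphi_k}. That proof presumably reduces to a one- or two-parameter family via the symmetries of $\varphi$ and then bounds the objective by an expression whose infimum $\sfrac{16}{27}$ is approached only as the parameters tend to the boundary of $\PD_4$. In the paper's reduction the extremizing sequence should have some principal minor tending to $0$, as the introduction indicates.

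I would therefore argue as follows. The bound in \Cref{thm: varphi_k} comes from a chain of inequalities $\Sigma[\varphi]\ge \dots\ge \sfrac{16}{27}$. At least one step in that chain, such as a Koteljanskii or AM--GM step, is an equality only on a locus where a principal minor vanishes, which is impossible in the open cone $\PD_4$.

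The main obstacle is that this equality analysis depends on details of the deferred proof. If I had to avoid it, I would try a local perturbation instead. At a hypothetical minimizer $\Sigma_0$, the gradient of $\log\Sigma[\varphi]$ must vanish. I would then show, via the reduction to the symmetric family, that the reduced one-variable function $x\mapsto \dots$ is strictly decreasing toward the boundary, so it has no interior critical point attaining $\sfrac{16}{27}$.
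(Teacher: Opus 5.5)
Your computation of the value is correct and is how the paper obtains it: specialize \Cref{thm: varphi_k} to $k=0$ to get $4^4/(16\cdot 3^3)=16/27$. Since $k_*$ is by definition positive, $0<k_*$ needs no separate check. You are also right that non-attainment cannot be deduced from the \emph{statement} of \Cref{thm: varphi_k}. The logarithm of the bound is concave in $k$ with slope $4\log\frac43$ at $k=0$, so a hypothetical minimizer with $R\ge(4/3)^4$ would be compatible with the theorem for every $k\ge 0$. The genuine gap is that you never supply the missing argument. Your claim that some step of the chain ``is an equality only where a principal minor vanishes'' is asserted without naming the step or checking that it occurs in every case of the proof. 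The perturbation route is left as a placeholder, and the vanishing-gradient condition is beside the point anyway, because the reductions are inequalities rather than reparametrizations.

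What is needed is a strict form of the bound on the reduced family, which the paper's proof supplies implicitly. By \Cref{prop: normal form 2} and \Cref{prop - full reduction}, every $\Sigma\in\PD_4$ has $\Sigma[\varphi]\ge\Sigma'[\varphi]$ for some $\Sigma'\in\PD_4'$. So it suffices to show $\Sigma'[\varphi]>16/27$ on $\PD_4'$.
If $xy<1$, \Cref{lem - keyineq2} gives $\Sigma'[\varphi]\ge 3/4>16/27$.
If $xy\ge 1$, the chain in \Cref{prop - main2} at $k=0$ gives two bounds. When $t\le 1$, it gives $\Sigma'[\varphi]\ge\frac{(1+t)^4}{16(1-s)t}\ge\frac{1}{1-s}\cdot\frac{16}{27}$, since the minimum of $(1+t)^4/(16t)$ is at $t=1/3$. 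When $t\ge 1$, it gives $\Sigma'[\varphi]\ge\frac{t}{1-s}\ge\frac{1}{1-s}$.
Here $s=\frac{1-f}{2}>0$ because $\Sigma'[34]=1-f^2>0$, so both bounds strictly exceed $16/27$. The $+\varepsilon$ in $a'$ gives a second source of strictness, since $[\varphi]$ increases with the square of the $(1,2)$ entry.
So your heuristic is right, but the step that carries it is the final estimate $1-s\le 1$ of \Cref{prop - main2}. That estimate is tight only when the principal minor $[34]$ vanishes. Together with the $3/4$ margin on the region $xy<1$, this is the argument you needed to write down.
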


The next observation requires the following lemma, which uses tools from number theory.

\begin{lemma}
The number $k_*$ from \Cref{thm: varphi_k} is transcendental.
\end{lemma}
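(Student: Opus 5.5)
Set $x = 4k+3$, so that $4k+4 = x+1$. The defining equation $(4k+4)^{4k+4} = 16(4k+3)^{4k+3}$ becomes $(x+1)^{x+1} = 16\,x^{x}$. Equivalently,
\[
  \Bigl(1+\tfrac1x\Bigr)^{x}(x+1) = 16 .
\]
Since $k_*$ is algebraic if and only if $x_* = 4k_*+3$ is, it suffices to show that $x_*$ is transcendental. We argue by contradiction and assume $x_*$ is algebraic. Note $x_* \approx 5.393 > 0$, and $x_*$ is irrational; the latter will be checked below.

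The plan is to invoke the Gelfond--Schneider theorem: if $a$ is algebraic with $a \ne 0,1$ and $b$ is algebraic irrational, then $a^b$ is transcendental. Rewrite the equation as
\[
  \Bigl(\frac{x+1}{x}\Bigr)^{x} = \frac{16}{x+1}.
\]
If $x_*$ is algebraic, then $a = (x_*+1)/x_*$ is algebraic, positive, and different from $0$ and $1$, and the right-hand side $16/(x_*+1)$ is algebraic. If $x_*$ is moreover irrational, Gelfond--Schneider says $a^{x_*}$ is transcendental, which contradicts the identity. The branch of the power is harmless here: $a>0$ and we use the real principal value $\exp(x_*\log a)$, which is exactly the quantity Gelfond--Schneider addresses.

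The remaining step, and the only real obstacle, is to rule out rational $x_*$. Suppose $x_* = p/q$ in lowest terms with $p,q>0$. The equation becomes
\[
  \Bigl(\frac{p+q}{q}\Bigr)^{(p+q)/q} = 16\Bigl(\frac{p}{q}\Bigr)^{p/q}.
\]
Raise both sides to the power $q$:
\[
  \frac{(p+q)^{p+q}}{q^{p+q}} = 2^{4q}\,\frac{p^{p}}{q^{p}}, \qquad\text{that is,}\qquad (p+q)^{p+q} = 2^{4q}\,p^{p}\,q^{q}.
\]
Since $\gcd(p+q,p) = \gcd(p+q,q) = 1$, the factor $(p+q)^{p+q}$ is coprime to $p^p q^q$. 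Hence $p^p q^q = 1$, which forces $p = q = 1$, and then the equation reads $2^2 = 2^4$, a contradiction. (Any prime dividing $p$ must divide $(p+q)^{p+q}$, impossible unless $p=1$; similarly $q=1$.) This shows $x_*$ is irrational.

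Finally, we confirm that $k_*$ is well defined, i.e. that the positive root is unique, as asserted in \Cref{thm: varphi_k}; this may also be checked directly. Consider
\[
  g(x) = (x+1)\log(x+1) - x\log x - 4\log 2 .
\]
Its derivative is $g'(x) = \log\frac{x+1}{x} > 0$, so $g$ is strictly increasing. A strictly increasing function has at most one root, so the root $x_* > 3$ (corresponding to $k_* > 0$) is unique. Combining the three steps, $x_*$ and hence $k_*$ is transcendental.
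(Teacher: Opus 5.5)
Your proof is correct and follows essentially the same route as the paper: the substitution $4k+3$, the coprimality argument on $(p+q)^{p+q}=16^q p^p q^q$ to rule out a rational root, and Gelfond--Schneider applied to $\bigl(\frac{x_*+1}{x_*}\bigr)^{x_*}$. The only cosmetic difference is that you divide by $x_*+1$ where the paper multiplies by it.
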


\begin{proof}
For simplicity, we apply the affine substitution $u = 4k+3$. Consider the function $f(u) = \frac{(u+1)^{u+1}}{u^u} = (u+1)\!\left(\frac{u+1}{u}\right)^u$. For positive $u$, this is continuous, positive, strictly increasing, and its range is $(1,\infty)$. Hence, there is a unique $u_* > 0$ for which $f(u_*) = 16$. We prove $u_*$ is transcendental, from which it follows that $k_* = \frac14(u_* - 3)$ is also transcendental.

First assume that $u_*$ is rational. Write $u_*=\sfrac{p}{q}$, where $p,q\in\mathbb{N}$ and $\gcd(p,q)=1$. Then
\[
  \left(\frac{p+q}{q}\right)^{(p+q)/q} = 16\left(\frac{p}{q}\right)^{p/q}.
\]
Raising both sides to the power $q$, and then multiplying by $q^{p+q}$, this becomes
\[
  (p+q)^{p+q}=16^q p^p q^q.
\]
Now $\gcd(p+q,p)=\gcd(q,p)=1$ and $\gcd(p+q,q)=\gcd(p,q)=1$, so $\gcd(p+q,pq)=1$. Thus, no prime divisor of $p$ or $q$ can divide the left-hand side, while every prime divisor of $p$ or $q$ divides the right-hand side. Therefore, we must have $p=q=1$, which implies that $u_*=1$. This contradicts $f(1) = 4 \neq 16$, so $u_*$ is irrational.

Thus, if $u_*$ is algebraic (and irrational by the previous paragraph) then $(u_* + 1)/u_*$ is algebraic, nonzero, and different from~$1$. Hence, by the Gelfond--Schneider theorem \cite{Gelfond1934,Schneider1935}, $\big(\frac{u_*+1}{u_*}\big)^{\!u_*}$ is transcendental. Multiplying this by the non-zero algebraic number $u_*+1$ keeps it transcendental, contradicting the fact that it should equal~$16$. Thus, $u_*$, and therefore $k_*$, are transcendental.
\end{proof}

The above lemma gives an obstruction to any rational-algebraic description of the cone of absolutely bounded functionals. The family $\varphi_k$ is a rational affine line of functionals, and \Cref{thm: varphi_k} shows that it enters $\mathcal{A}_4$ exactly at $k=k_*$, a transcendental point.

\begin{corollary}
The convex cones $\CC A_n$, for $n \geq 4$, are not semialgebraic over $\QQ$, i.e., none of them can be described by finitely many polynomial equations and inequalities with rational coefficients.
\end{corollary}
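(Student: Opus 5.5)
Suppose, for contradiction, that $\CC A_4$ is semialgebraic over $\QQ$: $\CC A_4 \subseteq \RR^{16}$ is a finite Boolean combination of sets $\{x : p(x) > 0\}$ and $\{x : p(x) = 0\}$ with $p \in \QQ[x_1,\dots,x_{16}]$.

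Intersect $\CC A_4$ with the affine line $\ell = \{\varphi_k : k \in \RR\}$. This line is rational. Indeed,
\[
\varphi_k = \varphi + k\,(\Delta_{13|4}+\Delta_{14|3}+\Delta_{23|4}+\Delta_{24|3}),
\]
where both $\varphi$ and the direction vector are integer vectors. So the pullback of each defining polynomial under $k \mapsto \varphi_k$ is a univariate polynomial in $\QQ[k]$. Hence $T := \{k \in \RR : \varphi_k \in \CC A_4\}$ is a semialgebraic subset of $\RR$ defined over $\QQ$. Every such set is a finite union of points and intervals whose endpoints are real roots of nonzero rational polynomials, or $\pm\infty$. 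In particular, every finite boundary point of $T$ is algebraic.

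Next we identify $T$ using \Cref{thm: varphi_k}.
\begin{itemize}
\item For $k \ge k_*$ the infimum of $[\varphi_k]$ is $1$, so $\varphi_k \in \CC A_4$.
\item For $0 \le k < k_*$ the infimum is $\frac{(4k+4)^{4k+4}}{16(4k+3)^{4k+3}} < 1$, so $\varphi_k \notin \CC A_4$.
\end{itemize}
To see the strict inequality, note that $f(u) = (u+1)^{u+1}/u^u$ is strictly increasing, so $f(4k+3) < f(u_*) = 16$ when $k < k_*$.

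For $k < 0$ we only need to know $T$ near $k_*$, so no further analysis is required. It follows that $T \cap [0,\infty) = [k_*,\infty)$, and hence $k_*$ is a boundary point of $T$. By the previous paragraph $k_*$ must then be algebraic. This contradicts the preceding lemma, which shows $k_*$ is transcendental. Therefore $\CC A_4$ is not semialgebraic over $\QQ$.

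It remains to pass to $n \ge 4$. As used in \Cref{thm: non polyhedral}, $\CC A_n$ is a coordinate projection of $\CC A_{n+1}$, and by induction $\CC A_4$ is a coordinate projection of $\CC A_n$. By Tarski–Seidenberg quantifier elimination, which holds over $\QQ$ because elimination preserves the coefficient field, projections of $\QQ$-semialgebraic sets are $\QQ$-semialgebraic. So if some $\CC A_n$ were semialgebraic over $\QQ$, then so would be $\CC A_4$, a contradiction.

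This projection step is the one point that needs care: we must make sure the projection identification really holds, i.e.\ that $\alpha \in \CC A_4$ if and only if its extension by zero lies in $\CC A_n$. This follows because principal submatrices of $\PD_n$ matrices exhaust $\PD_4$. With that fact in hand, we could alternatively intersect $\CC A_n$ with the rational line of zero-extensions of $\varphi_k$, which gives the contradiction without invoking Tarski–Seidenberg.
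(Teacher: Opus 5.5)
Your argument is correct and is essentially the paper's: the rational line $k \mapsto \varphi_k$ enters $\CC A_4$ exactly at the transcendental point $k_*$. Your observation that boundary points of a $\QQ$-semialgebraic subset of $\RR$ are roots of rational polynomials is a quantifier-free version of the paper's appeal to Tarski--Seidenberg and the transfer principle, and it also avoids needing to know $T$ for $k<0$.

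For $n \ge 4$, the zero-extension identity you verify shows that $\CC A_4 = \iota^{-1}(\CC A_n)$ is a coordinate \emph{section}, not a projection. The coordinate projection is strictly larger: for example $[1][5]/[15] \in \CC A_5$ projects to the functional $[1] \notin \CC A_4$. (The paper uses the same loose ``projection'' wording.) So replace the Tarski--Seidenberg sentence with your final argument via the zero-extended rational line, which needs nothing beyond substitution.
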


\begin{proof}
Another way to define $k_*$ is as the value
\[
  \inf \{ k \in \RR : \varphi_k \in \CC A_4 \}.
\]
If $\CC A_4$ is semialgebraic over~$\QQ$, this definition can be written as a formula in the first-order theory of $\RR$ with rational coefficients. In this case, the Tarski--Seidenberg theorem and Tarski's transfer principle \cite[Chapter~3]{RAGOpt} imply that $k_*$ is a real algebraic number, which is a contradiction. Since $\CC A_n$ is a coordinate projection of $\CC A_{n+1}$ and projections of semialgebraic sets are semialgebraic, the negative result for $\CC A_4$ extends to all $\CC A_n$ with $n \geq 4$.
\end{proof}

Since $\varphi_k$ is balanced, it is invariant under the scaling $\Sigma \mapsto D \Sigma D$ with a positive definite diagonal matrix~$D$. Hence, it suffices to consider $\PD_4$ matrices with ones on the diagonal, leaving just six unknown off-diagonal entries.
For each fixed $k \in \QQ$ the objective function $[\varphi_k]$ and the constraints defining $\PD_4$ are semialgebraic, so the infimum is computable via cylindrical algebraic decomposition. %
This includes the result of~\Cref{thm: 1627}. However, despite involving only six variables, we were not able to obtain the $\sfrac{16}{27}$ bound purely computationally, %
even after a month of computation in Mathematica. Our proof strategy relies crucially on \emph{symmetry reduction} using positivity-preserving transformations, in particular the Hadamard product \cite[]{Khare2022}.

\begin{proposition} \label{lemma: symmetry reduction}
For any $k \ge 0$ and any $\Sigma \in \PD_4$ there exists a $\Sigma' \in \PD_4$ with nonnegative entries of the form
\begin{gather}
  \label{eq: symmetry reduced}
  \Sigma' = \begin{pmatrix}
    1 & a & b & b \\
    a & 1 & d & d \\
    b & d & 1 & f \\
    b & d & f & 1
  \end{pmatrix}
\end{gather}
such that $\Sigma'[\varphi_k] \le \Sigma[\varphi_k]$.
\end{proposition}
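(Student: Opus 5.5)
The plan rests on the symmetries of $[\varphi_k]$ together with positivity-preserving operations on $\PD_4$. First, by balancedness I may assume $\Sigma$ is a correlation matrix with off-diagonal entries $a=\Sigma_{12}$, $b=\Sigma_{13}$, $c=\Sigma_{14}$, $d=\Sigma_{23}$, $e=\Sigma_{24}$, $f=\Sigma_{34}$. The formula
\[
  [\varphi_k] = \frac{([13][14][23][24])^{k+1}[34]^{4k+1}}{[12]\,([3][4][134][234])^{2k+1}}
\]
is visibly invariant under the transposition $\pi=(3\,4)$, which swaps $[13]\leftrightarrow[14]$, $[23]\leftrightarrow[24]$ and $[134]\leftrightarrow[234]$. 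It is also invariant under $(1\,2)$. The target form \eqref{eq: symmetry reduced} is exactly a $(3\,4)$-symmetric correlation matrix with nonnegative entries, so there are two reductions: symmetrize under $\pi$, and make all entries nonnegative.

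For the sign reduction I would first use congruences $\Sigma\mapsto D\Sigma D$ with $D=\mathrm{diag}(\pm1)$. These preserve all principal minors, so I can normalize, say, $b,c,d\ge 0$. The remaining signs enter only through the $3\times3$ minors, via cycle products. For a unit-diagonal matrix, $[ijk]=1-x^2-y^2-z^2+2xyz$. The only minors of $\Sigma$ depending on signs are $[134]=1-b^2-c^2-f^2+2bcf$ and $[234]=1-d^2-e^2-f^2+2def$; these appear only in the denominator, while $[12]$ and the $2\times 2$ numerator minors depend only on squares. Hence replacing every entry by its absolute value can only decrease $[134]$ and $[234]$, and so can only increase the functional. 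That is the wrong direction.

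So I would instead choose the signs so that both cycles $bcf$ and $def$ become nonpositive, and then flip the sign of $f$, as is possible whenever one can arrange $bc\ge0$ and $de\ge 0$. The positivity of the flipped matrix must still be checked. Controlling the sign patterns compatibly with positive definiteness is where I would use the Hadamard product. The Schur product theorem gives $\Sigma\circ\Sigma'\in\PD_4$ for $\Sigma,\Sigma'\in\PD_4$, and in particular $\Sigma\circ\Sigma$ has entries $a^2,\dots,f^2\ge 0$.

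For the symmetrization step I would take $T=\Sigma\circ \pi\Sigma\pi$. This is again a positive definite correlation matrix, its $(3\,4)$-symmetric positions coincide ($T_{13}=T_{14}=bc$ and $T_{23}=T_{24}=de$), and $T_{12}=a^2$, $T_{34}=f^2$. To also get nonnegativity, I might instead use $\Sigma\circ\pi\Sigma\pi$ after first replacing $\Sigma$ by a sign-normalized version with $bc,de\ge0$. Alternatively I would use the Hadamard square root, or geometric-mean entries $\sqrt{|bc|}$, which is not automatically positive definite and would need its own argument. The crux is then an inequality of the form
\[
  T[\varphi_k]\le \Sigma[\varphi_k]\cdot(\pi\Sigma\pi)[\varphi_k]=\Sigma[\varphi_k]^2,
\]
or, better, a version $\Sigma''[\varphi_k]\le\Sigma[\varphi_k]$ for a suitably rescaled symmetric matrix $\Sigma''$. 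I would try to prove it factor by factor. The numerator factors $[ij]=1-x_{ij}^2$ behave well because $1-x^2y^2\le\ldots$ splits into squares. For the denominator factors $[134]$ and $[234]$ I would seek lower bounds, via Oppenheim-type inequalities $\det(A\circ B)\ge\det A\prod_i B_{ii}$ on $3\times3$ blocks, or via the explicit cubic formulas in six variables.

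I expect the main obstacle to be this comparison of $[\varphi_k]$ before and after the Hadamard symmetrization, because numerator and denominator both change and the exponents depend on $k$. My first attempt would be to write $\log\Sigma[\varphi_k]$ as a sum of $k$-independent pieces, namely $\log[\varphi]$ and the four Koteljanskii terms, and to prove the desired monotonicity separately for each conditional mutual information term and for $[\varphi]$ itself under the operation. If a clean inequality fails, I would fall back on restricting first to the locus where the minimum can occur, namely near the boundary of $\PD_4$, and comparing there via explicit polynomial inequalities that are amenable to CAD.
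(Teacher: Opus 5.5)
There is a genuine gap at the central step. The Hadamard symmetrization $T=\Sigma\circ\pi\Sigma\pi$ does not decrease $[\varphi_k]$, and the inequality $T[\varphi_k]\le\Sigma[\varphi_k]^2$ that you call the crux is false.

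Take $k=0$ and the curve \eqref{eq: inf witness}, so $a=t^3$ and $b=c=d=e=f=t$. Then $\Sigma[\varphi]=\frac{(1+t)^5}{(1+t^3)(1+t+t^2)(1+2t)^2}$. Since $\Sigma$ is already $(3\,4)$-symmetric, $T=\Sigma^{\circ2}$ is the same curve at parameter $t^2$. At $t=0.9$ this gives $\Sigma[\varphi]\approx0.674$ but $T[\varphi]\approx0.749$. By continuity the same failure occurs for nearby non-symmetric $\Sigma$. Your inequality would even give $T[\varphi]\le 0.674^2<16/27$, contradicting the infimum.

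The reason is structural. Hadamard products shrink off-diagonal entries toward the identity, where $[\varphi_k]=1$. Small values live near the boundary of $\PD_4$, close to a rank-one matrix. Splitting $\log[\varphi_k]$ into $\log[\varphi]$ and Koteljanskii terms cannot rescue this, since $k=0$ already fails.

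What is missing is the route you mention only in passing. Replace $b,c$ by $\sqrt{bc}$ and $d,e$ by $\sqrt{de}$, keep $f$, and, crucially, re-choose the $(1,2)$ entry as the admissible value of smallest modulus. That value is $0$ if $2(bc+de)<1+f$, and otherwise just above $\frac{2\sqrt{bcde}-\sqrt{(1+f-2bc)(1+f-2de)}}{1+f}$.

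The $\{1,3,4\}$ part then decreases by AM--GM, $\Delta_1=\Sigma[134]\le\delta_1=(1-f)(1+f-2bc)$. This combines with the identities $(1-b^2)(1-c^2)=\Delta_1+(bc-f)^2$ and $(1-bc)^2=\delta_1+(bc-f)^2$, and the $\{2,3,4\}$ part is handled the same way.

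Positive definiteness of the new matrix and $|a'|\le|a|$ (which handles the $[12]$ factor) come from the interval for the $(1,2)$ entry forced by the determinant. That interval is compared with the one for $\Sigma$ through a Cauchy--Schwarz/triangle-inequality estimate. None of this appears in your proposal.

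Your sign reduction also does not work as written. You have the direction backwards: since $2bcf\le2|bcf|$, taking absolute values \emph{increases} $[134]$ and $[234]$ and hence \emph{decreases} $[\varphi_k]$. The real obstruction is that the entrywise absolute value of $\Sigma$ need not be positive definite.

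Moreover, $bcf$ and $def$ are invariant under diagonal sign congruences, so you cannot choose signs to make them nonpositive. Flipping $f$ alone is not a congruence either.

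The paper proceeds as follows. A congruence gives $f\ge0$. If $bc>0$ and $de>0$, a further congruence makes $b,c,d,e\ge0$. Otherwise it passes to $\Sigma^{\circ2}$ and proves $\Sigma^{\circ2}[\varphi_k]\le\Sigma[\varphi_k]$ via $(1+f^2)\,g_k(b,c,f)\,g_k(d,e,f)\le1$. This needs a real argument, because squaring also enlarges the $2\times2$ numerator minors. As the example above shows, it is only available in the sign-incompatible case.
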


We note that this reduction to four variables is enough for Mathematica's implementation of CAD to verify that $\sfrac{16}{27} < \Sigma'[\varphi]$ for all $\Sigma'$ of the form~\eqref{eq: symmetry reduced}, and thus for all~$\PD_4$. This computation is available in our Zenodo record (\textsf{ingleton-1627.nb}). %

Before turning to the proof of \Cref{thm: varphi_k}, we introduce a family of positive definite matrices whose values of $[\varphi_k]$ approach its infimum. Besides motivating the proof, this family provides the sharp upper bound on the infimum. Hence, for $0\le t<1$, consider the curve of~matrices
\begin{equation}
  \label{eq: inf witness}
  \begin{pmatrix}
    1 & t^\alpha & t & t \\
    t^\alpha & 1 & t & t \\
    t & t & 1 & t^\beta \\
    t & t & t^\beta & 1
  \end{pmatrix},
\end{equation}
where $\alpha=4-\frac{1}{k+1}$ and $\beta= \frac{1}{k+1}$. Its leading principal minors are all positive since $[1]=1$ and
\begin{gather*}
    [12] = 1-t^{2\alpha}, \qquad [123] = (1-t^\alpha)(1+t^\alpha-2t^2),\\
  [1234] = (1-t^\alpha)(1-t^\beta)(1+t^\alpha+t^\beta -3t^4).
\end{gather*}
Moreover, the objective function equals
\[
 [\varphi_k] = \frac{(1+t^{\beta})^{4k+1}}{1+t^{\alpha}}\cdot\frac{1-t^{2}}{1-t^{\alpha}}\cdot\frac{1-t^{2}}{1-t^{\beta}}\cdot\left(\frac{1-t^{2}}{1+t^{\beta}-2t^{2}}\right)^{\!4k+2}.
\]
Applying l'Hôpital's rule on the second, third and fourth factor, we find that
\begin{align*}
    \lim_{t\to 1^-} [\varphi_k] = \lim_{t\to 1^-} \frac{(1+t^{\beta})^{4k+1}}{1+t^{\alpha}}\cdot\frac{2t^{2-\alpha}}{\alpha}\cdot\frac{2t^{2-\beta}}{\beta}\cdot\left(\frac{2}{4-\beta t^{\beta-2}}\right)^{\!4k+2} \!= \frac{(4k+4)^{4k+4}}{16 (4k+3)^{4k+3}}.
\end{align*}
Moreover, we trivially verify that $\operatorname{id}_4[\varphi] = 1$. Hence, it follows that the infimum of $[\varphi_k]$ over $\PD_4$ is bounded above by
\[
\min\bigg\{1,\, \frac{(4k+4)^{4k+4}}{16 (4k+3)^{4k+3}} \bigg\}.
\]
The remainder of this paper is dedicated to proving that this bound is indeed the infimum $[\varphi_k]$ over $\PD_4$, proving \Cref{thm: varphi_k}.

\begin{figure}
\includegraphics[width=0.5\linewidth]{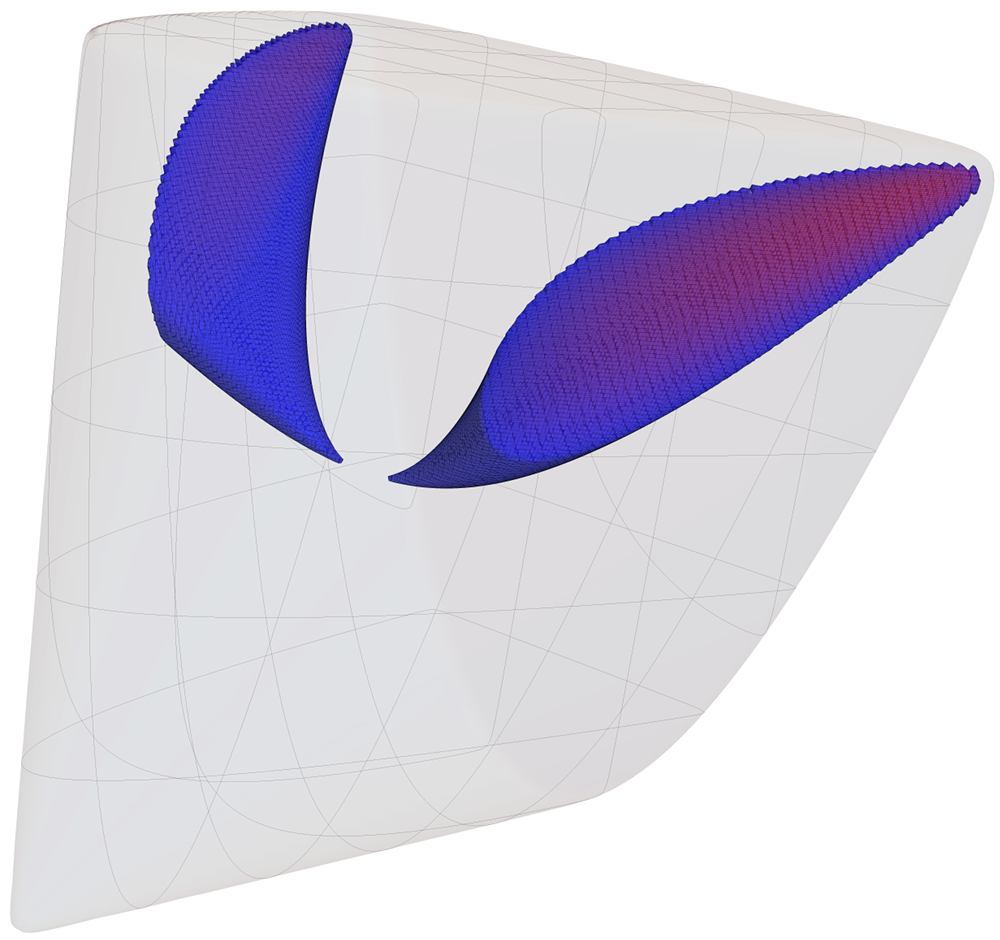}
\caption{\emph{The Wings of Ingleton} --- The gray convex body is the set of all positive definite $4 \times 4$ matrices of the form \eqref{eq: inf witness} in $(t^\alpha,t,t^\beta)$-space. The colored region consists of all points on which $[\varphi] < 1$. Points are colored on a linear gradient according to their value under $[\varphi]$ from blue (close to $1$) to red (close~to~$\sfrac{16}{27}$).}
\label{fig:Wings}
\end{figure}

\subsection{Sign symmetry}\label{sec: 1627 sym}

({\itshape\textsf{Lean: GMM.Reductions.Positivity}}).
We begin the proof of \Cref{lemma: symmetry reduction} with a sign reduction. The goal of this section is to show that, for the purpose of minimizing $\varphi_k$, we may restrict our attention to correlation matrices whose entries $b,c,d,e,f$ are all nonnegative. The argument has two parts. First, the invariance of $\varphi_k$ under diagonal sign changes allows us to assume $f\geq 0$. Second, if the remaining entries do not already have compatible signs, we may replace the matrix by its \emph{Hadamard square}, namely the matrix obtained by squaring each entry. %

\begin{lemma} \label{lemma: normal form 1}
For every $\Sigma \in \PD_4$, there exists $\Sigma' \in \PD_4$ of the form
\begin{equation}
  \label{eq: unit diagonal}
  \Sigma' = \begin{pmatrix}
    1 & a & b & c \\
    a & 1 & d & e \\
    b & d & 1 & f \\
    c & e & f & 1
  \end{pmatrix}
\end{equation}
with $f \geq 0$, and such that $\Sigma[\varphi_k] = \Sigma'[\varphi_k]$ for all $k\geq 0$.
\end{lemma}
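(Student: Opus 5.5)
The plan is to use two invariances of principal minors: diagonal scaling and diagonal sign changes. Both come from congruence by a diagonal matrix.

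\emph{Step 1: reduce to unit diagonal.} Since $\varphi_k$ is balanced for every $k \ge 0$, the value $\Sigma[\varphi_k]$ is unchanged under $\Sigma \mapsto D\Sigma D$ for a positive diagonal $D$. Concretely, $(D\Sigma D)[K] = \Sigma[K]\prod_{i\in K} d_i^2$. Hence the factor attached to $d_i$ in $(D\Sigma D)[\varphi_k]$ is $d_i^{2\sum_{K\ni i}\varphi_k(K)} = 1$. Balancedness can be checked directly from the explicit product formula for $[\varphi_k]$: it is a product of a power of $[\varphi]$ with Koteljanskii ratios, each of which is balanced. Choosing $d_i = \Sigma_{ii}^{-1/2}$ produces a correlation matrix of the form \eqref{eq: unit diagonal}. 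The operation does not depend on $k$, so the equality holds for all $k$ simultaneously.

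\emph{Step 2: make $f \ge 0$.} If the entry $f = \Sigma_{34}$ of the correlation matrix is negative, conjugate by $S = \mathrm{diag}(1,1,1,-1)$. For any $K$, the submatrix $(S\Sigma S)_K$ equals $S_K\Sigma_K S_K$ with $\det S_K = \pm 1$, so every principal minor is unchanged. In particular $S\Sigma S \in \PD_4$, and $[\varphi_k]$ takes the same value for every $k$, since it depends only on principal minors. This conjugation flips the signs of the entries in row and column $4$ other than the diagonal, namely $c$, $e$, $f$, and so gives $f \ge 0$. The diagonal remains all ones.

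No real obstacle is expected. The only point needing care is confirming that $\varphi_k$ is balanced, which is a one-line count from its defining product. Even this is avoidable in Step 2, because sign changes preserve every principal minor individually.
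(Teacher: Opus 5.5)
Your proposal is correct and follows the paper's proof: you first rescale by a positive diagonal congruence, using that $\varphi_k$ is balanced, to reach unit diagonal, and then apply a sign change to make $f \ge 0$. Your conjugation by $\mathrm{diag}(1,1,1,-1)$ is exactly the paper's admissible sign pattern with $\sigma_a=\sigma_b=1$ and $\sigma_f=-1$ (so $\sigma_c=\sigma_e=-1$ and $\sigma_d=1$), and your remark that a diagonal $\pm1$ congruence fixes every principal minor supplies the justification the paper leaves as ``one can check''.
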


\begin{proof}
Since $\varphi_k$ is balanced, it is invariant under the positivity-preserving action $\Sigma \mapsto D \Sigma D$, where $D$ is a positive definite diagonal matrices. Thus, we may assume that $\Sigma$ has unit diagonal, i.e., is of the form \eqref{eq: unit diagonal}. %
The functional $\varphi_k$ and the $\PD$ constraints are also preserved under certain sign flips applied to the entries of the matrix. Consider
\[
  \Sigma'' = \begin{pmatrix}
    1 & \sigma_a a & \sigma_b b & \sigma_c c \\
    \sigma_a a & 1 & \sigma_d d & \sigma_e e \\
    \sigma_b b & \sigma_d d & 1 & \sigma_f f \\
    \sigma_c c & \sigma_e e & \sigma_f f & 1
  \end{pmatrix},
\]
where each $\sigma_x \in \{\pm1\}$. One can check that we may choose $\sigma_a, \sigma_b, \sigma_f$ freely, and the following choices preserve the value of $\varphi_k$ and the $\PD$ constraints:
\begin{gather}
  \label{eq:SignFlips}
  \sigma_c = \sigma_b \sigma_f, \quad
  \sigma_d = \sigma_a \sigma_b, \quad
  \sigma_e = \sigma_a \sigma_b \sigma_f.
\end{gather}
In particular, we may assume $f \geq 0$ by flipping its sign if necessary.
\end{proof}

From now on, we assume that $\Sigma \in \PD_4$ is of the form \eqref{eq: unit diagonal} with $f \geq 0$. The following auxiliary function will be needed:
\begin{equation}\label{eq - gk}
    g_k(x,y,z) := \frac{(1+x^2)^{k+1}(1+y^2)^{k+1}(1+2xyz-x^2-y^2-z^2)^{2k+1}(1+z^2)^{2k}}{(1+2x^2y^2z^2-x^4-y^4-z^4)^{2k+1}}.
\end{equation}

\begin{lemma}\label{lem - g_k}
Suppose that $\Sigma \in \PD_4$ and $f\geq 0$. Then, for all $k \geq 0$,
\[
  0\leq g_k(b,c,f)\leq 1 \quad \text{and}\quad 0\leq g_k(d,e,f) \leq 1.
\]
Moreover, if $bc\leq 0$, then $(1+f^2)g_k(b,c,f)\leq 1$.
\end{lemma}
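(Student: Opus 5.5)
The plan is to treat $g_k$ as a statement about the $3\times 3$ correlation matrix
\[
  C(x,y,z)=\begin{pmatrix}1&x&y\\ x&1&z\\ y&z&1\end{pmatrix}.
\]
For $(x,y,z)=(b,c,f)$ it is the principal submatrix of $\Sigma$ on $\{1,3,4\}$, and for $(x,y,z)=(d,e,f)$ it is the one on $\{2,3,4\}$. Write $D=\det C=1+2xyz-x^2-y^2-z^2$ and $E=1+2x^2y^2z^2-x^4-y^4-z^4$. The quantity $E$ is exactly $\det(C\circ C)$, the determinant of the Hadamard square. Nonnegativity of $g_k$ is then immediate. We have $D>0$ because $C$ is a principal submatrix of $\Sigma\in\PD_4$, and $E>0$ by the Schur product theorem, since $C\circ C$ is positive definite. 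All other factors of $g_k$ are positive.

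For the upper bound I split the exponents. Put $A=(1+x^2)(1+y^2)D$ and $B=(1+z^2)^2D$. Then
\[
  g_k=\frac{A}{E}\cdot\Bigl(\frac{AB}{E^2}\Bigr)^{k}.
\]
The exponents match: $D$ appears to the power $2k+1$, $E$ to $2k+1$, $(1+x^2)(1+y^2)$ to $k+1$, and $(1+z^2)$ to $2k$. Since $k\ge0$, it suffices to prove two polynomial inequalities on the open elliptope with $z\ge0$:
\[
  \text{(i)}\ \ A\le E, \qquad \text{(ii)}\ \ AB\le E^2 .
\]
The split is natural because (ii) is not implied by the simpler $B\le E$, which fails already at $x=y=0$. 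At that point (ii) is an equality, since $(1-z^2)(1+z^2)=1-z^4$, while (i) reads $1-z^2\le 1-z^4$. This equality case explains why the infimum can be pushed to the boundary.

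For the refinement when $xy\le0$, note that $\frac{1}{E}(1+z^2)A\cdot(AB/E^2)^k$ is exactly $(1+z^2)g_k$. So it suffices to prove (i') $(1+z^2)A\le E$ when $xyz\le 0$, combined with (ii). Here $z=f\ge0$ is used, so $xy\le0$ gives $xyz\le0$.

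To prove (i), (i') and (ii), I use how the variables enter. $E$ depends on $x,y$ only through $x^2,y^2$ and $w^2=x^2y^2$. $D$ contains $xy$ only in the term $2xyz$, linearly. Hence for fixed $x^2,y^2,z$, the left-hand sides are largest when $xyz\ge0$, while the admissible region (the elliptope, $D>0$) is symmetric under $(x,y)\mapsto(-x,y)$ as long as $D>0$ remains. I would first reduce to $x,y,z\ge 0$, or to $xyz\le0$ for the refined claim. Then I would expand $E-A$ and $E-(1+z^2)A$ and try to write each as an explicit combination of nonnegative terms. Natural candidates are $D$ itself, $(x-y)^2$, $(xy-z)^2$, $z^2(1-x^2)(1-y^2)$, and $(1-x^2)(1-y^2)-(z-xy)^2=D$. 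For (i') I expect a direct SOS-style identity because the sign of $xyz$ helps. For (i), I expect to need $D\ge0$ as a multiplier, that is, a Positivstellensatz-type certificate $E-A=\sigma_0+\sigma_1 D$.

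The main obstacle is (ii). It is a degree-$12$ inequality that is tight along $x=y=0$, so any certificate must vanish to the right order there. I would first try the substitution $x=\cos\theta_1$, $y=\cos\theta_2$, $z=\cos(\theta_1\pm\theta_2)\cdot s+\ldots$, or equivalently parametrize by the partial correlation $\rho=(z-xy)/\sqrt{(1-x^2)(1-y^2)}$. In this form $D=(1-x^2)(1-y^2)(1-\rho^2)$ factors. Then I would check monotonicity of $AB/E^2$ in $\rho$ and reduce to a two-variable inequality on a boundary, where CAD or an explicit SOS decomposition is feasible. This matches how the paper verifies the analogous reduced statements and the Lean formalization.
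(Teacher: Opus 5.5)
\textbf{Gap: the three polynomial inequalities are never proved.} Your reduction is sound. The factorization $g_k=\frac{A}{E}\bigl(\frac{AB}{E^2}\bigr)^k$ is correct, and (i), (ii), and (i'), together with $k\ge 0$, would indeed give the lemma. But you do not prove any of (i), (i'), (ii). For (i) and (i') you only say you expect certificates to exist. For (ii), which you call the main obstacle, you sketch a partial-correlation/monotonicity/CAD strategy and never carry it out. Since the whole content of the lemma lies in these inequalities, the upper bounds remain unproved.

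\textbf{The missing idea: one explicit identity plus symmetry.} One has
\[
E-(1+x^2)(1+y^2)D=(z-xy)^2\bigl((1+x^2)(1+y^2)-(z+xy)^2\bigr).
\]
The second factor is nonnegative by Cauchy--Schwarz, $(z+xy)^2\le (z^2+x^2)(1+y^2)\le(1+x^2)(1+y^2)$. This uses only $|z|\le 1$, so no multiplier $D$ is needed, and it gives (i).

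Now use that $D$ and $E$ are symmetric in $x,y,z$. The same identity with the variables permuted gives $(1+x^2)(1+z^2)D\le E$ and $(1+y^2)(1+z^2)D\le E$. Multiplying these two (both sides are positive, since $D>0$) gives exactly $AB\le E^2$. So (ii) is not a genuine degree-$12$ obstacle. $B$ alone is too large, as you observed, but $AB$ regroups as a product of two instances of (i).

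This is the paper's route. It writes $g_k$ as $\frac{(1+b^2)(1+c^2)\Sigma[134]}{\Sigma^{\circ 2}[134]}$ times the $k$-th powers of its two permuted analogues, each of which is at most $1$.

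For (i'), a direct expansion gives
\[
E-(1+x^2)(1+y^2)(1+z^2)D=(1+x^2+y^2+z^2)(x^2y^2+x^2z^2+y^2z^2+x^2y^2z^2)-2xyz(1+x^2)(1+y^2)(1+z^2).
\]
This is visibly nonnegative when $xyz\le 0$. Combined with the two permuted bounds, it yields $(1+f^2)g_k(b,c,f)\le 1$. Your preliminary sign reduction to $xyz\ge 0$ is then unnecessary.
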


\begin{proof}
It suffices to prove the bounds for $g_k(b,c,f)$, as the proof for $g_k(d,e,f)$ is analogous. For any $b, c, f \in \RR$ we have $g_k(b,c,f)\geq 0$, since
\[
g_k(b,c,f) = \frac{(1+b^2)^{k+1}(1+c^2)^{k+1}(1+f^2)^{2k} (\Sigma[134])^{2k+1}}{(\Sigma^{\circ 2}[134])^{2k+1}}
\]
and both $\Sigma$ and $\Sigma^{\circ2}$ are positive definite. Now, a direct computation gives
\begin{enumerate}
    \item $\displaystyle \Sigma^{\circ 2}[134]-(1+b^2)(1+c^2)\Sigma[134] = (f-bc)^2\big((1+b^2)(1+c^2)-(f+bc)^2\big)$,

    \item $\displaystyle \Sigma^{\circ 2}[134]-(1+b^2)(1+f^2)\Sigma[134] = (c-bf)^2\big((1+b^2)(1+f^2)-(c+bf)^2\big),$

    \item $\displaystyle \Sigma^{\circ 2}[134]-(1+c^2)(1+f^2)\Sigma[134] = (b-cf)^2\big((1+c^2)(1+f^2)-(b+cf)^2\big)$.
\end{enumerate}
In each case, the last factors are nonnegative by Cauchy--Schwarz, since $b,c,f\leq 1$. Hence, dividing by $\Sigma^{\circ 2}[134]>0$, we obtain
\[
\frac{(1+b^2)(1+c^2)\Sigma[134]}{\Sigma^{\circ 2}[134]} \leq 1,\quad \frac{(1+b^2)(1+f^2)\Sigma[134]}{\Sigma^{\circ 2}[134]} \leq 1 \quad\text{ and }\quad \frac{(1+c^2)(1+f^2)\Sigma[134]}{\Sigma^{\circ 2}[134]} \leq 1 .
\]
Therefore, it follows that
\begin{align*}
    g_k(b,c,f) &= \frac{(1 \hspace{-1.2pt}+\hspace{-1.2pt} b^2)(1 \hspace{-1.2pt}+\hspace{-1.2pt} c^2)\Sigma[134]}{\Sigma^{\circ 2}[134]} \biggl( \frac{(1 \hspace{-1.2pt}+\hspace{-1.2pt} b^2)(1 \hspace{-1.2pt}+\hspace{-1.2pt} f^2)\Sigma[134]}{\Sigma^{\circ 2}[134]}\biggr)^{\!\!k}\! \biggl( \frac{(1 \hspace{-1.2pt}+\hspace{-1.2pt} c^2)(1 \hspace{-1.2pt}+\hspace{-1.2pt} f^2)\Sigma[134]}{\Sigma^{\circ 2}[134]}\biggr)^{\!\!k}\! \leq 1.
\end{align*}

Now, assume that $bc\leq 0$. Then we also have
\[
\Sigma^{\circ 2}[134] \geq (1+b^2)(1+c^2)(1+f^2)\Sigma[134].
\]
Indeed, expanding the terms and grouping them, we find that this is equivalent to
\begin{align*}
     (b^{2}+c^{2}+f^{2}+1)(b^{2}c^{2}f^{2}+b^{2}c^{2}+b^{2}f^{2}+c^{2}f^{2}) \geq 2bcf(b^{2}+1)(c^{2}+1)(f^{2}+1).
\end{align*}
Since $f\geq 0$ and $bc \leq 0$, it is clear that the right-hand side is inferior or equal to $0$ and thus that inequality is satisfied. Therefore, it follows in that case that
\begin{align*}
    g_k(b,c,f) &=  \frac{(1 \hspace{-1.2pt}+\hspace{-1.2pt} b^2)(1 \hspace{-1.2pt}+\hspace{-1.2pt} c^2)\Sigma[134]}{\Sigma^{\circ 2}[134]} \biggl( \frac{(1 \hspace{-1.2pt}+\hspace{-1.2pt} b^2)(1 \hspace{-1.2pt}+\hspace{-1.2pt} f^2)\Sigma[134]}{\Sigma^{\circ 2}[134]}\biggr)^{\!\!k}\! \biggl( \frac{(1 \hspace{-1.2pt}+\hspace{-1.2pt} c^2)(1 \hspace{-1.2pt}+\hspace{-1.2pt} f^2)\Sigma[134]}{\Sigma^{\circ 2}[134]}\biggr)^{\!\!k}\! \leq \frac{1}{1+f^2}
\end{align*}
for all $k\geq 0$.
\end{proof}

For any $\Sigma, \Sigma' \in \PD_n$, Schur's product theorem~\cite[Section~5.2]{MatrixAnalysis} ensures that the \emph{Hadamard product} $\Sigma \circ \Sigma' = [\sigma_{ij} \sigma'_{ij}]_{1\le i,j\le n}$ is also positive definite. In particular, the Hadamard square
\[
  \Sigma^{\circ 2}:=\Sigma\circ \Sigma = \begin{pmatrix}
    1&a^2&b^2&c^2\\a^2&1&d^2&e^2\\b^2&d^2&1&f^2\\c^2&e^2&f^2&1
  \end{pmatrix}
\]
is positive definite and has all entries non-negative.

\begin{lemma}\label{lem - positive_ineq}
Suppose that $\Sigma \in \PD_4$, $f\geq 0$, and that either $bc \leq 0$ or $de\leq 0$. Then, for all $k \geq 0$, we have $\Sigma[\varphi_k] \geq \Sigma^{\circ 2}[\varphi_k]$.
\end{lemma}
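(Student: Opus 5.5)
The plan is to compute the ratio $\Sigma^{\circ 2}[\varphi_k]/\Sigma[\varphi_k]$ explicitly and show it is at most $1$ using \Cref{lem - g_k}. By Schur's product theorem $\Sigma^{\circ 2}\in\PD_4$, so $\Sigma^{\circ 2}[\varphi_k]$ is well defined and positive. Throughout, $\Sigma$ is of the form \eqref{eq: unit diagonal} with $f\ge 0$, so both $\Sigma$ and $\Sigma^{\circ 2}$ have unit diagonal. Hence all factors $[3],[4]$ equal $1$ for both matrices.

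First I handle the $2\times 2$ minors. For an off-diagonal entry $x$ in position $\{i,j\}$ we have $\Sigma[ij]=1-x^2$ and $\Sigma^{\circ 2}[ij]=1-x^4=(1-x^2)(1+x^2)$. The quotient of each $2\times 2$ minor is therefore just $1+x^2$. Using the expanded form
\[
  [\varphi_k]=\frac{([13][14][23][24])^{k+1}[34]^{4k+1}}{[12]\,([3][4][134][234])^{2k+1}},
\]
the $2\times 2$ part of the ratio contributes
\[
  \frac{(1+b^2)^{k+1}(1+c^2)^{k+1}(1+d^2)^{k+1}(1+e^2)^{k+1}(1+f^2)^{4k+1}}{1+a^2}.
\]
The $3\times 3$ part contributes $\bigl(\Sigma[134]/\Sigma^{\circ2}[134]\bigr)^{2k+1}\bigl(\Sigma[234]/\Sigma^{\circ2}[234]\bigr)^{2k+1}$.

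Next I compare this with the auxiliary function \eqref{eq - gk}. In the proof of \Cref{lem - g_k} it was noted that
\[
  g_k(b,c,f)=(1+b^2)^{k+1}(1+c^2)^{k+1}(1+f^2)^{2k}\bigl(\Sigma[134]/\Sigma^{\circ 2}[134]\bigr)^{2k+1},
\]
and analogously for $g_k(d,e,f)$ with the minors on $\{2,3,4\}$. Together these two factors account for $(1+f^2)^{4k}$, leaving one extra factor $1+f^2$. This gives the identity
\[
  \frac{\Sigma^{\circ 2}[\varphi_k]}{\Sigma[\varphi_k]}=\frac{(1+f^2)\,g_k(b,c,f)\,g_k(d,e,f)}{1+a^2}.
\]
The only real work is to verify this bookkeeping of exponents carefully.

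Finally I apply \Cref{lem - g_k}. By the symmetry $b,c\leftrightarrow d,e$, which corresponds to swapping indices $1$ and $2$, I may assume $bc\le 0$. Then $(1+f^2)g_k(b,c,f)\le 1$ and $0\le g_k(d,e,f)\le 1$. Moreover $1/(1+a^2)\le 1$. Hence the ratio is at most $1$, which gives $\Sigma^{\circ2}[\varphi_k]\le\Sigma[\varphi_k]$. The case $de\le 0$ is identical with the roles of the two $g_k$ factors exchanged. The main obstacle is only the exponent bookkeeping in the identity above, since the substantive inequalities are already contained in \Cref{lem - g_k}.
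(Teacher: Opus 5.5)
Your proof is correct and follows the paper's argument. You write $\Sigma^{\circ 2}[\varphi_k]/\Sigma[\varphi_k]$ as $(1+f^2)\,g_k(b,c,f)\,g_k(d,e,f)/(1+a^2)$ and then invoke \Cref{lem - g_k} after reducing to $bc\le 0$. You also handle the factor $1/(1+a^2)\le 1$ explicitly, which is slightly more careful than the paper: it drops this factor and calls the resulting condition ``equivalent'', when that condition is only sufficient.
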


\begin{proof}
After simplifying using the elementary identity $1-x^4=(1-x^2)(1+x^2)$, we find that we have $\Sigma[\varphi_k] \geq \Sigma^{\circ 2}[\varphi_k]$ if and only if
\begin{equation}\label{eq - positive_ineq}
    \begin{gathered}
    \frac{1}{(1+2bcf-b^2-c^2-f^2)^{2k+1}(1+2def-d^2-e^2-f^2)^{2k+1}} \\[-6pt]
    \rotatebox{-90}{$\geq$} \\
    \frac{\bigl((1+b^2)(1+c^2)(1+d^2)(1+e^2)\bigr)^{\!k+1}(1+f^2)^{4k+1}}{(1+a^2)(1+2b^2c^2f^2-b^4-c^4-f^4)^{2k+1}(1+2d^2e^2f^2-d^4-e^4-f^4)^{2k+1}}.
    \end{gathered}
\end{equation}
Since $\Sigma$ is positive definite, all parenthesized factors in \eqref{eq - positive_ineq} are positive. Hence, defining $g_k$ as in \eqref{eq - gk}, we have that the inequality \eqref{eq - positive_ineq} is equivalent to
\[
(1+f^2) g_k(b,c,f) \cdot g_k(d,e,f) \leq 1.
\]
But this is always satisfied by \Cref{lem - g_k}, since we may assume without loss of generality that $bc \le 0$. Therefore, we always have $\Sigma[\varphi_k] \geq \Sigma^{\circ 2}[\varphi_k]$ for all $k\geq 0$.
\end{proof}

Combining the previous lemmas yields the following proposition.

\begin{proposition} \label{prop: normal form 2}
For every $\Sigma \in \PD_4$ there exists $\Sigma'$ of the form \eqref{eq: unit diagonal} with $b, c, d, e, f \geq 0$ such that $\Sigma[\varphi_k] \geq \Sigma'[\varphi_k]$ for all $k \geq 0$.
\end{proposition}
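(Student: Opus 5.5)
Starting from arbitrary $\Sigma \in \PD_4$, we first apply \Cref{lemma: normal form 1} to obtain a matrix of the form \eqref{eq: unit diagonal} with $f \ge 0$ and the same value of $[\varphi_k]$ for every $k \ge 0$. Call this matrix $\Sigma_1$ and split into two cases according to the signs of the products $bc$ and $de$.

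\emph{Case 1: $bc \le 0$ or $de \le 0$.} \Cref{lem - positive_ineq} gives $\Sigma_1[\varphi_k] \ge \Sigma_1^{\circ 2}[\varphi_k]$ for all $k \ge 0$. By Schur's product theorem, $\Sigma_1^{\circ2}$ is positive definite, has unit diagonal and has all entries nonnegative. We therefore take $\Sigma' = \Sigma_1^{\circ 2}$, whose entries $b^2, c^2, d^2, e^2, f^2$ are all $\ge 0$ (indeed $a^2 \ge 0$ too, though this is not required).

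\emph{Case 2: $bc > 0$ and $de > 0$.} Here $b$ and $c$ share a sign, and so do $d$ and $e$. We use the sign flips \eqref{eq:SignFlips} from the proof of \Cref{lemma: normal form 1}, keeping $\sigma_f = 1$ so that $f \ge 0$ is preserved. Then $\sigma_c = \sigma_b$ and $\sigma_e = \sigma_d = \sigma_a\sigma_b$.
\begin{itemize}
\item Choose $\sigma_b = \operatorname{sign}(b)$. This makes $b$ and $c$ nonnegative simultaneously, since they had the same sign.
\item Choose $\sigma_a$ so that $\sigma_a\sigma_b = \operatorname{sign}(d)$. This makes $d$ and $e$ nonnegative simultaneously.
\end{itemize}
The resulting $\Sigma'$ has $b,c,d,e,f \ge 0$. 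Because $\sigma_a$ and $\sigma_b$ are free, the lemma guarantees that $\Sigma'$ remains positive definite and satisfies $\Sigma'[\varphi_k] = \Sigma_1[\varphi_k]$. The sign of $a$ is left unconstrained, which is consistent with the statement.

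The main point to check is that the sign flips really preserve both positive definiteness and $[\varphi_k]$. This holds because \eqref{eq:SignFlips} is exactly the conjugation $\Sigma \mapsto S\Sigma S$ by a diagonal $\pm1$ matrix $S$ (take $s_1 = 1$, $s_2 = \sigma_a$, $s_3 = \sigma_b$, $s_4 = \sigma_b\sigma_f$). Such a conjugation fixes every principal minor, so it preserves both properties.

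The other point is that the inequality must hold uniformly in $k$, with one $\Sigma'$ working for all $k \ge 0$ at once. This is automatic, because neither construction depends on $k$: \Cref{lem - positive_ineq} holds for all $k \ge 0$ simultaneously, and the sign flips preserve $[\varphi_k]$ exactly for every $k$.

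Chaining the steps, $\Sigma[\varphi_k] = \Sigma_1[\varphi_k] \ge \Sigma'[\varphi_k]$ for all $k \ge 0$, which proves the proposition.
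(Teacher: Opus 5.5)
Your proof is correct and follows the paper's argument. You reduce to $f \ge 0$ via \Cref{lemma: normal form 1}, use the sign flips when $bc>0$ and $de>0$, and otherwise pass to the Hadamard square via \Cref{lem - positive_ineq}. Identifying \eqref{eq:SignFlips} with conjugation by $S=\operatorname{diag}(1,\sigma_a,\sigma_b,\sigma_b\sigma_f)$ and fixing $\sigma_b$ before $\sigma_a$ is slightly cleaner than the paper's wording, which glosses over the fact that flipping $b,c$ with $\sigma_a=1$ also flips $d,e$.
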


\begin{proof}
By \Cref{lemma: normal form 1}, we can assume that $\Sigma$ is of the form \eqref{eq: unit diagonal} with $f \geq 0$. If $bc>0$ and $de>0$, then $b$ and $c$ have the same sign and $d$ and $e$ have the same sign. As seen in the proof of \Cref{lemma: normal form 1}, we may simultaneously flip the signs of $b$ and $c$ to ensure that they are all positive. We may also simultaneously flip the signs of $a$, $d$, and $e$ to ensure that $d, e > 0$. These signs flips do not change the value of $\varphi_k$ or the $\PD$ constraints.
Otherwise $bc\leq 0$ or $de\leq 0$. In~this case \Cref{lem - positive_ineq} shows that $\Sigma' = \Sigma^{\circ2}$ satisfies all claimed properties.
\end{proof}

\subsection{Permutation symmetry}\label{sec: 1627 permsym}

({\itshape\textsf{Lean: GMM.Reductions.Permutation}}).
The previous section reduces our considerations to matrices
\begin{equation}
  \label{eq: normal form 2}
  \Sigma = \begin{pmatrix}
    1 & a & b & c \\
    a & 1 & d & e \\
    b & d & 1 & f \\
    c & e & f & 1
  \end{pmatrix}~ \text{ with $b, c, d, e, f \geq 0$}.
\end{equation}
The aim of this section is to show that we may even assume $b = c$ and $d = e$.

The following is a cylindrical algebraic decomposition of $\PD_4$ (assuming unit diagonals) with variable ordering $f, b, d, c, e, a$:
\begin{gather}
  -1 < f < 1, \quad -1 < b < 1, \quad -1 < d < 1, \nonumber\\
  bf - \sqrt{(1-b^2)(1-f^2)} < c < bf + \sqrt{(1-b^2)(1-f^2)}, \nonumber\\
  df - \sqrt{(1-d^2)(1-f^2)} < e < df + \sqrt{(1-d^2)(1-f^2)}, \nonumber\\
  \frac{bd + ce - bef - cdf - \sqrt{\Delta_1\Delta_2}}{1-f^2} < a < \frac{bd + ce - bef - cdf + \sqrt{\Delta_1\Delta_2}}{1-f^2}, \label{eq: a bounds}
\end{gather}
where
\begin{equation*}
  \Delta_1 \defas \Sigma[134] = 1 + 2bcf - b^2 - c^2 - f^2 \quad\text{and}\quad
  \Delta_2 \defas \Sigma[234] = 1 + 2def - d^2 - e^2 - f^2.
\end{equation*}
If the entries $b, c, d, e, f$ are fixed, then $\varphi_k$ is minimized if the term $\Sigma[12] = 1-a^2$ in the denominator is maximized. Hence, $|a|$ must be as close to zero as the inequality~\eqref{eq: a bounds} allow.

\begin{lemma}\label{lem - PD}
Let $\Sigma \in \PD_4$ of the form \eqref{eq: normal form 2} be arbitrary. Define
\[
  \Sigma' \defas \begin{pmatrix}
  1  & a' & \sqrt{bc} & \sqrt{bc} \\
  a' & 1  & \sqrt{de } &\sqrt{de} \\
  \sqrt{bc} & \sqrt{de} & 1 & f \\
  \sqrt{bc} & \sqrt{de} & f & 1
  \end{pmatrix},
\]
where
\[
  a' \defas \begin{cases}
    0 \qquad & \text{if }~~ 2(bc+de)<1+f,\\
    \frac{2\sqrt{bcde}-\sqrt{\left(1+f-2bc\right)\left(1+f-2de\right)}}{1+f}+\eps & \text{if }~~ 2(bc+de)\geq 1+f.
  \end{cases}
\]
Then $\Sigma'$ is positive definite for all $\eps > 0$ small enough.
\end{lemma}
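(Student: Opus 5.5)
The plan is to apply the cylindrical algebraic decomposition \eqref{eq: a bounds} to $\Sigma'$ itself, with $b=c=\beta\defas\sqrt{bc}$, $d=e=\delta\defas\sqrt{de}$ and the same~$f$. Since $\Sigma'$ has unit diagonal, it is positive definite if and only if four conditions hold. The first is $-1<f<1$, together with $0\le\beta,\delta<1$. The second is that $\Sigma'[134]>0$. The third is that $\Sigma'[234]>0$. The fourth is that $a'$ lies strictly between the two bounds of~\eqref{eq: a bounds}. For the symmetric matrix these quantities factor nicely:
\[
  \Sigma'[134] = 1+2\beta^2 f-2\beta^2-f^2 = (1-f)(1+f-2bc),
\]
and similarly $\Sigma'[234]=(1-f)(1+f-2de)$. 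The center of the $a$-interval is $\frac{2\beta\delta(1-f)}{1-f^2}=\frac{2\sqrt{bcde}}{1+f}$. Its half-width is $\frac{(1-f)\sqrt{(1+f-2bc)(1+f-2de)}}{1-f^2}=\frac{\sqrt{(1+f-2bc)(1+f-2de)}}{1+f}$. So the left endpoint of the interval is exactly $a'-\eps$ in the second case of the definition.

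First I will check the conditions on the entries and the two $3\times 3$ minors using the original $\Sigma$. Positive definiteness of $\Sigma$ gives $f<1$, and $f\ge0$ by hypothesis. It also gives $\Sigma[134]=1+2bcf-b^2-c^2-f^2>0$. By AM--GM, $b^2+c^2\ge 2bc$, so $0<\Sigma[134]\le 1+2bcf-2bc-f^2=(1-f)(1+f-2bc)$. Hence $1+f-2bc>0$, and likewise $1+f-2de>0$. This settles the first three conditions, and also $\beta,\delta<1$, because $2bc<1+f<2$.

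Next I will check that $a'$ lies in the open interval. Write $C=\frac{2\sqrt{bcde}}{1+f}$ and $R=\frac{\sqrt{(1+f-2bc)(1+f-2de)}}{1+f}>0$. In the second case $a'=C-R+\eps$, which lies in $(C-R,C+R)$ as soon as $0<\eps<2R$. In the first case $a'=0$, and we need $|C|<R$. Since $C\ge0$, this means $4bcde<(1+f-2bc)(1+f-2de)$. Expanding, this is equivalent to $(1+f)\bigl(1+f-2(bc+de)\bigr)>0$, which is exactly the case hypothesis $2(bc+de)<1+f$. The same computation shows that in the second case $C-R\ge0$, so the case split is precisely whether $0$ lies in the admissible interval. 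This is consistent with the remark that $|a|$ should be as small as possible.

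The main obstacle is the algebra: the factorizations of $\Sigma'[134]$ and of the interval endpoints, and the expansion showing that $C^2<R^2$ is equivalent to the first case. I will carry these out by direct expansion. No deeper tool is needed once the decomposition \eqref{eq: a bounds} is accepted as a characterization of $\PD_4$ with unit diagonal.
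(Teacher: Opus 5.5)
Your proposal is correct and is essentially the paper's proof. You bound $\Sigma'[134]=(1-f)(1+f-2bc)$ and $\Sigma'[234]=(1-f)(1+f-2de)$ from below by $\Sigma[134],\Sigma[234]>0$ via AM--GM, then specialize \eqref{eq: a bounds} to $\Sigma'$ and treat the two cases as the paper does; your $0<\varepsilon<2R$ matches the paper's $2\sqrt{\delta_1\delta_2}/(1-f^2)$, and your identity $(1+f-2bc)(1+f-2de)-4bcde=(1+f)\bigl(1+f-2(bc+de)\bigr)$ spells out the step the paper only calls a simplification.
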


\begin{proof}
The matrix $\Sigma'$ is positive definite if and only if $\Sigma'[34], \Sigma'[234], \Sigma'[1234] > 0$. The first positivity follows from $\Sigma'[34] = \Sigma[34] > 0$. By the AM-GM inequality $de = \sqrt{d^2e^2} \le \frac{d^2+e^2}{2}$. Thus,
\[
  0 < \Sigma[234] = 1 + 2def - d^2 - e^2 - f^2 \le 1 + 2def - 2de - f^2 = \Sigma'[234].
\]
Define $\delta_1 \defas \Sigma'[134] = (1-f)(1+f-2bc)$ and $\delta_2 \defas \Sigma'[234] = (1-f)(1+f-2de)$. The above argument shows that they are both positive. By specializing \eqref{eq: a bounds} to $\Sigma'$, we see that $\Sigma'[1234] > 0$ is equivalent to
\begin{gather}
  \frac{2(1-f)\sqrt{bcde}-\sqrt{\delta_1\delta_2}}{1-f^{2}} < a' < \frac{2(1-f)\sqrt{bcde}+\sqrt{\delta_1\delta_2}}{1-f^{2}} \label{eq: det Sigma'}.
\end{gather}
There are two cases to distinguish depending on the value of $a'$:
\begin{itemize}
\item If $2(bc+de) < 1+f$, we set $a' = 0$ and \eqref{eq: det Sigma'} is equivalent to $\frac{2(1-f)\sqrt{bcde}-\sqrt{\delta_1\delta_2}}{1-f^{2}} < 0$. This simplifies to $2(bc + de) < 1+f$, which is our assumption.

\item If $2(bc + de) \ge 1+f$ then the definition of $a'$ is such that \eqref{eq: det Sigma'} holds for all $0 < \eps < \frac{2\sqrt{\delta_1\delta_2}}{1-f^2}$. \qedhere
\end{itemize}
\end{proof}

In the following, we define $\Delta_1$, $\Delta_2$, $\delta_1$ and $\delta_2$ as we did previously, namely
\[
\Delta_1 = \Sigma[134],\quad \Delta_2=\Sigma[234],\quad \delta_1=\Sigma'[134]\quad\text{and}\quad  \delta_2=\Sigma'[234].
\]

\begin{lemma}\label{lem - ineq 2}
    Let $\Sigma \in \PD_4$ of the form \eqref{eq: normal form 2} satisfy $2(bc+de)\geq 1+f$. Then
    \[
    |bd+ce-bef-cdf|-\sqrt{\Delta_1\Delta_2} \geq 2(1-f)\sqrt{bcde}-\sqrt{\delta_1\delta_2} \geq 0.
    \]
\end{lemma}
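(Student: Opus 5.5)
The plan is to prove the two inequalities separately. The right-hand one is a direct computation using the hypothesis $2(bc+de)\ge 1+f$. The left-hand one needs no hypothesis: I will compare $\Sigma$ and $\Sigma'$ through the defects $\delta_1-\Delta_1$ and $\delta_2-\Delta_2$, then apply one Cauchy--Schwarz step and one elementary factorisation.

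\emph{Right inequality.} Since $b,c,d,e\ge 0$ and $0\le f<1$, the left side $2(1-f)\sqrt{bcde}$ is nonnegative. Also $\delta_1,\delta_2>0$ by \Cref{lem - PD}. So it suffices to compare squares:
\[
4(1-f)^2bcde \ \ge\ (1-f)^2(1+f-2bc)(1+f-2de).
\]
Dividing by $(1-f)^2$ and expanding, the terms $4bcde$ cancel, and the inequality reduces to $(1+f)\bigl(2(bc+de)-(1+f)\bigr)\ge 0$. This is exactly the hypothesis.

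\emph{Left inequality.} Set $N\defas bd+ce-bef-cdf$. Since $|N|\ge N$, it suffices to prove
\[
N-2(1-f)\sqrt{bcde}\ \ge\ \sqrt{\Delta_1\Delta_2}-\sqrt{\delta_1\delta_2}.
\]
First, a direct expansion gives $\delta_1-\Delta_1=(b-c)^2$ and $\delta_2-\Delta_2=(d-e)^2$. Write $u=b-c$ and $v=d-e$. By Cauchy--Schwarz applied to the vectors $\bigl(\sqrt{\delta_1-u^2},|u|\bigr)$ and $\bigl(\sqrt{\delta_2-v^2},|v|\bigr)$,
\[
\sqrt{(\delta_1-u^2)(\delta_2-v^2)}+|uv|\le\sqrt{\delta_1\delta_2}.
\]
Hence the right-hand side of the reduced inequality is at most $-|b-c|\,|d-e|$.

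Second, on the left I complete squares with AM--GM-type identities. These are legitimate because all of $b,c,d,e$ are nonnegative:
\[
bd+ce-2\sqrt{bcde}=\bigl(\sqrt{bd}-\sqrt{ce}\bigr)^2,\qquad bef+cdf-2f\sqrt{bcde}=f\bigl(\sqrt{be}-\sqrt{cd}\bigr)^2 .
\]
So the left-hand side equals $(\sqrt{bd}-\sqrt{ce})^2-f(\sqrt{be}-\sqrt{cd})^2$. It therefore suffices to show
\[
\bigl(\sqrt{bd}-\sqrt{ce}\bigr)^2+|b-c|\,|d-e|\ \ge\ f\bigl(\sqrt{be}-\sqrt{cd}\bigr)^2 .
\]
Because $0\le f<1$, it is enough to prove this with $f$ replaced by $1$. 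Expanding both squares, the terms $-2\sqrt{bcde}$ cancel. What remains is
\[
bd+ce-be-cd+|b-c||d-e|\ge 0,
\]
that is, $(b-c)(d-e)+|(b-c)(d-e)|\ge 0$, which is trivially true.

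The main obstacle is finding the right way to split the comparison between the square-root terms and the polynomial terms. Neither comparison holds on its own with the correct sign. The key observations are that the defects $\delta_i-\Delta_i$ are perfect squares, so Cauchy--Schwarz yields the gain $|b-c||d-e|$, and that this gain exactly absorbs the possibly negative part $(b-c)(e-d)$ arising from the squared differences. The remaining verification is routine algebra, in particular checking the identities $\delta_1-\Delta_1=(b-c)^2$ and $\delta_2-\Delta_2=(d-e)^2$.
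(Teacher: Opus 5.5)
Your proof is correct and follows essentially the same route as the paper. It uses the same identities $\delta_1-\Delta_1=(b-c)^2$ and $\delta_2-\Delta_2=(d-e)^2$, the same Cauchy--Schwarz gain $|b-c|\,|d-e|$, and your completing-the-square step is a repackaging of the paper's triangle inequality plus AM--GM: both amount to $N+|b-c|\,|d-e|-2(1-f)\sqrt{bcde}=(1-f)\bigl(\sqrt{be}-\sqrt{cd}\bigr)^2+\bigl((b-c)(d-e)+|(b-c)(d-e)|\bigr)\ge 0$, and the right-hand inequality is the same reduction to $2(bc+de)\ge 1+f$, which you spell out by squaring.
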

\begin{proof}
    A direct computation yields $\Delta_1 = \delta_1-(b-c)^2$ and $\Delta_2 = \delta_2 -(d-e)^2$. Hence, an application of Cauchy--Schwarz reveals that
    \begin{align*}
        \sqrt{\Delta_1}\cdot \sqrt{\Delta_2} + |b-c|\cdot|d-e| \leq \sqrt{\Delta_1+(b-c)^2}\sqrt{\Delta_2+(d-e)^2} = \sqrt{\delta_1\delta_2}
    \end{align*}
    Moreover, by the triangle inequality
    \begin{align*}
        (be+cd)(1-f) &\leq |(be+cd)(1-f)+(b-c)(d-e)|+|b-c||d-e| \\
        &= |bd+ce-bef-cdf| + |b-c||d-e|.
    \end{align*}
    Therefore, it follows that
    \begin{align*}
        |bd+ce-bef-cdf|-\sqrt{\Delta_1\Delta_2} & \geq (be+cd)(1-f) - |b-c||d-e| - \sqrt{\Delta_1\Delta_2} \\
        & \geq (be+cd)(1-f)-\sqrt{\delta_1\delta_2}.
    \end{align*}
    The first inequality then follows from the AM-GM inequality, since $be+cd \geq  2\sqrt{bcde}.$ The second one is almost immediate, as it is equivalent to $2(bc+de)\ge1+f$, which is our hypothesis.
\end{proof}

\begin{corollary}\label{cor - min_a}
    Let $\Sigma \in \PD_4$ of the form \eqref{eq: normal form 2} satisfy $2(bc+de) \geq 1+f$. Moreover, let
    \[
    L:=\frac{bd+ce-bef-cdf-\sqrt{\Delta_1\Delta_2}}{1-f^{2}} \quad\text{ and }\quad U:=\frac{bd+ce-bef-cdf+\sqrt{\Delta_1\Delta_2}}{1-f^{2}}.
    \]
    Then $L$ and $U$ have the same sign, and the value of $a\in[L,U]$ minimizing $a^2$ is
    \[
     a= \frac{bd+ce-bef-cdf-\epsilon\sqrt{\Delta_1\Delta_2}}{1-f^{2}},
    \]
    where $\epsilon = \operatorname{sign}(bd+ce-bef-cdf)$. In that case, we have
    \[
    \epsilon \cdot a = \frac{|bd+ce-bef-cdf|-\sqrt{\Delta_1\Delta_2}}{1-f^{2}}.
    \]
\end{corollary}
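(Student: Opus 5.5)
The plan is to use \Cref{lem - ineq 2} to see that $L$ and $U$ have the same sign, and then locate the point of $[L,U]$ nearest $0$. Write $m \defas bd+ce-bef-cdf$ and $s \defas \sqrt{\Delta_1\Delta_2} \ge 0$, and note that $1-f^2>0$. Then
\[
L = \frac{m-s}{1-f^2}, \qquad U = \frac{m+s}{1-f^2}.
\]
Under the hypothesis $2(bc+de)\ge 1+f$, \Cref{lem - ineq 2} gives $|m| - s \ge 0$, that is, $|m| \ge s$. Consequently $m-s$ and $m+s$ have the same sign as $m$, or one of them vanishes. Hence $L$ and $U$ have the same sign, i.e.\ $LU \ge 0$, so $0$ lies in the interior of $[L,U]$ only if the interval is degenerate.

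Next I would split according to $\epsilon = \operatorname{sign}(m)$.

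\textbf{Case $m>0$.} Here $0 \le L \le U$, so the minimizer of $a^2$ over $[L,U]$ is the left endpoint $L = (m-s)/(1-f^2)$. This matches the claimed formula with $\epsilon = 1$.

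\textbf{Case $m<0$.} Here $L \le U \le 0$, so the minimizer is the right endpoint $U = (m+s)/(1-f^2)$. This matches the formula with $\epsilon=-1$, since $m - \epsilon s = m + s$.

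\textbf{Case $m=0$.} Then $|m|\ge s$ forces $s=0$, so $L=U=0$. Every convention for the sign gives $a=0$, which again agrees with the formula.

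Finally, multiplying the formula for $a$ by $\epsilon$ gives
\[
\epsilon a = \frac{\epsilon m - s}{1-f^2} = \frac{|m| - s}{1-f^2},
\]
because $\epsilon m = |m|$ and $\epsilon^2 = 1$ (or, when $m=0$, because $s=0$). This is the final identity of the statement.

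The argument is routine; there is no real obstacle. The only point needing care is the boundary case $|m| = s$, where one endpoint is zero. The formula still selects the correct endpoint there. One may also remark that this gives the infimum of $a^2$ over the open interval $(L,U)$, which is the relevant version for positive definiteness in \eqref{eq: a bounds}, since the bound is approached as $a$ tends to the endpoint.
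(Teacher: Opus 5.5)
Your proof is correct and follows essentially the same route as the paper: both use \Cref{lem - ineq 2} to get $|m|\ge\sqrt{\Delta_1\Delta_2}$, hence $LU\ge 0$, and then pick the endpoint nearest zero according to the sign of $m$. The paper does this by noting that $U^2-L^2$ has the sign of $m$, while you order the endpoints directly. Your case $m=0$ is actually vacuous, since $\Delta_1\Delta_2>0$ for positive definite $\Sigma$ forces $|m|\ge\sqrt{\Delta_1\Delta_2}>0$.
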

\begin{proof}
    Note that $L$ and $U$ have the same sign if and only if $LU\geq 0$. We easily compute via the elementary identity $(x-y)(x+y)=x^2-y^2$ that $LU\geq 0$ if and only if
    \[
    (bef+cdf-bd-ce)^{2}\geq (1+2bcf-b^{2}-c^{2}-f^{2})(1+2def-d^{2}-e^{2}-f^{2}).
    \]
    Hence, it follows from \Cref{lem - ineq 2} that $LU\geq0$, which establishes our first claim. Now, since $L$ and $U$ have the same sign, any number $a$ such that $L\leq a\leq U$ satisfies
    \[
    \min\{L^2,U^2\} \le a^2 \le \max\{L^2,U^2\}.
    \]
    Hence, the smallest $a^2$ such that $L<a<U$ is $\min\{L^2,U^2\}$. Observe that
    \[
    U^2-L^2 = 2(bd+ce-bef-cdf)\sqrt{\Delta_1\Delta_2}.
    \]
    Since $\Delta_1\Delta_2>0$, the sign of $U^2-L^2$ only depends on the sign of $bd+ce-bef-cdf$. In particular, the minimum is realized by $L^2$ if $bd+ce-bef-cdf>0$, and by $U^2$ otherwise, that is
    \begin{equation*}
        a= \frac{bd+ce-bef-cdf-\epsilon\sqrt{\Delta_1\Delta_2}}{1-f^{2}}. \qedhere
    \end{equation*}
\end{proof}

\begin{proposition}\label{prop - full reduction}
    Let $\Sigma \in \PD_4$ of the form \eqref{eq: normal form 2} be arbitrary, and define
    \[
    \Sigma':=\begin{pmatrix}
        1&a'&\sqrt{bc}&\sqrt{bc}\\a'&1&\sqrt{de}&\sqrt{de}\\\sqrt{bc}&\sqrt{de}&1&f\\\sqrt{bc}&\sqrt{de}&f&1
    \end{pmatrix},
    \]
    where
    \[
    a':=\begin{cases}
        0 \qquad &\text{if }~~ 2(bc+de)<1+f;\\
        \frac{2\sqrt{bcde}-\sqrt{\left(1+f-2bc\right)\left(1+f-2de\right)}}{1+f}+\varepsilon &\text{if }~~ 2(bc+de)\geq 1+f.
    \end{cases}
    \]
    Then, $\Sigma'\in \PD_4$ and, for all $k\geq 0$, $\Sigma[\varphi_k] \geq \Sigma'[\varphi_k]$ whenever $\varepsilon>0$ is sufficiently small. %
\end{proposition}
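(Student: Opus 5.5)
Positive definiteness of $\Sigma'$ is exactly \Cref{lem - PD}, so the plan concerns only the inequality $\Sigma[\varphi_k]\ge\Sigma'[\varphi_k]$. We write
\[
  [\varphi_k] = \frac{([13][14][23][24])^{k+1}\,[34]^{4k+1}}{[12]\,([3][4][134][234])^{2k+1}},
\]
and compare factor by factor. The diagonal is one in both matrices, and $[34]=1-f^2$ is unchanged. For the $2\times2$ minors, AM--GM gives
\[
  [13][14] = (1-b^2)(1-c^2) = 1 - b^2 - c^2 + b^2c^2 \le 1 - 2bc + b^2c^2 = (1-bc)^2 = \Sigma'[13]\,\Sigma'[14],
\]
and likewise $[23][24]\le\Sigma'[23]\Sigma'[24]$. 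Since these products appear in the numerator, replacing them by the primed values can only make that part of the bound go the wrong way. The proof must therefore absorb this through the $3\times 3$ minors and the $[12]$ term.

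For the $3\times3$ minors, the proof of \Cref{lem - ineq 2} gives $\Delta_1=\delta_1-(b-c)^2\le\delta_1$ and $\Delta_2\le\delta_2$. These sit in the denominator with the large exponent $2k+1$, which points the right way. To get a clean comparison we group each minor with its partners. Set $s=bc$. We aim to show
\[
  \frac{\bigl((1-b^2)(1-c^2)\bigr)^{k+1}}{\Delta_1^{2k+1}} \;\ge\; \frac{(1-s)^{2k+2}}{\delta_1^{2k+1}}
\]
for fixed product $bc$, and the analogous statement for $d,e$. This reduces to a two-variable problem: with $bc$ fixed, minimize $\frac{((1-b^2)(1-c^2))^{k+1}}{(1+2bcf-b^2-c^2-f^2)^{2k+1}}$ over $b^2+c^2=2s+t$, where $t=(b-c)^2\ge0$. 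Writing $(1-b^2)(1-c^2)=(1-s)^2-t$ and $\Delta_1=\delta_1-t$, the left side becomes $h(t)=\frac{((1-s)^2-t)^{k+1}}{(\delta_1-t)^{2k+1}}$. Its logarithmic derivative is
\[
  -\frac{k+1}{(1-s)^2-t}+\frac{2k+1}{\delta_1-t}.
\]
Since $\delta_1=(1-f)(1+f-2s)\le(1-s)^2$ (which is equivalent to $(f-s)^2\ge0$), we have $\delta_1-t\le(1-s)^2-t$. Together with $2k+1\ge k+1$, the derivative is nonnegative, so $h$ is minimized at $t=0$. This step is the main computational check, and I expect it to go through cleanly.

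The remaining term is $[12]$. We need $1-a^2\le 1-a'^2$, that is, $a'^2\le a^2$ up to the $\varepsilon$-perturbation, which must be handled by strictness or continuity. In the case $2(bc+de)<1+f$ we have $a'=0$, so this is trivial. In the other case, \Cref{cor - min_a} shows that $|a|\ge\frac{|bd+ce-bef-cdf|-\sqrt{\Delta_1\Delta_2}}{1-f^2}$. By \Cref{lem - ineq 2}, this is at least
\[
  \frac{2(1-f)\sqrt{bcde}-\sqrt{\delta_1\delta_2}}{1-f^2}\;\ge\;0.
\]
Simplifying with $\delta_i=(1-f)(\cdot)$, this equals $a'-\varepsilon$. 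So $a^2\ge(a'-\varepsilon)^2$, and $a'^2\to(a'-\varepsilon)^2$ as $\varepsilon\to0$.

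This perturbation is the genuinely delicate point. If $\Sigma$ already attains the extremal value of $a$, the inequality could fail by $O(\varepsilon)$. I would handle this by using the slack from the strict inequality $\Sigma[1234]>0$: the true $|a|$ lies strictly inside $(L,U)$, so $|a|>a'-\varepsilon$ with a definite gap, and any $\varepsilon$ smaller than that gap works. Combining the three factor comparisons, each raised to a nonnegative power, then gives $\Sigma[\varphi_k]\ge\Sigma'[\varphi_k]$.
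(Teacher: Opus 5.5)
Your proof is correct and follows the paper's own argument. It uses the same grouping of $[13][14]$ with $[134]^{2k+1}$ and of $[23][24]$ with $[234]^{2k+1}$, and the same combination of \Cref{cor - min_a}, \Cref{lem - ineq 2} and the openness of the interval for $a$ to absorb the $\varepsilon$-perturbation in the $[12]$ factor. The one cosmetic difference is that you prove the grouped inequality with a log-derivative in $t=(b-c)^2$, whereas the paper writes $(1-b^2)(1-c^2)=\Delta_1+(bc-f)^2$ and uses monotonicity in $\Delta_1\le\delta_1$; both rest on the fact that this difference does not depend on $t$.
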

\begin{proof}
    Since $\Sigma$ is positive definite, \Cref{lem - PD} ensures that $\Sigma'$ is also positive definite. Hence, $\varphi_k$ is well defined for $\Sigma'$ and the result follows if we can show that $\Sigma[\varphi_k] \geq \Sigma'[\varphi_k]$ for all $\varepsilon>0$ sufficiently small. Therefore, first observe that, by the AM-GM inequality,
    $$
    \Delta_1 = 1+2bcf-b^{2}-c^{2}-f^{2} \leq 1+2bcf-2bc-f^{2} %
    = \delta_1.
    $$
    Hence, it follows that
    \begin{align*}
        \frac{(1-b^{2})^{k+1}(1-c^{2})^{k+1}}{\Delta_1^{2k+1}} &= \frac{1}{\Delta_1^{k}}\biggl(1+\frac{(bc-f)^{2}}{\Delta_1}\biggr)^{\!k+1} \!\geq  \frac{1}{\delta_1^{k}}\biggl(1+\frac{(bc-f)^{2}}{\delta_1}\biggr)^{\!k+1}\! = \frac{(1-bc)^{2k+2}}{\delta_1^{2k+1}}.
    \end{align*}
    Similarly,
    \begin{equation*}
        \frac{(1-d^{2})^{k+1}(1-e^{2})^{k+1}}{\Delta_2^{2k+1}} \geq \frac{(1-de)^{2k+2}}{\delta_2^{2k+1}}.
    \end{equation*}

For fixed values of $b,c,d,e,f$, the function $\varphi_k$ is always increasing relative to $a^2$. Hence, to minimize the function, we need $a^2$ to be as small as possible. If $2(bc+de)<1+f$, then
\begin{align*}
    \Sigma[\varphi_k] &= \frac{\bigl((1-b^2)(1-c^2)(1-d^2)(1-e^2)\bigr)^{\!k+1}(1-f^2)^{4k+1}}{(1-a^2)\Delta_1^{2k+1}\Delta_2^{2k+1}} \\
    &\geq \frac{(1-bc)^{2k+2}(1-de)^{2k+2}(1-f^2)^{4k+1}}{\delta_1^{2k+1}\delta_2^{2k+1}}
    \,=\, \Sigma'[\varphi_k].
\end{align*}
Now suppose that $2(bc+de)\geq 1+f$. Since $\Sigma\in \PD_4$, we have $\det(\Sigma)>0$, which means that
\[
\frac{bd+ce-bef-cdf-\sqrt{\Delta_1\Delta_2}}{1-f^{2}}<a<\frac{bd+ce-bef-cdf+\sqrt{\Delta_1\Delta_2}}{1-f^{2}}.
\]
\Cref{cor - min_a} thus implies that the infimum of $a^2$ in this open interval is given by
\[
a_0^2:= \left(\frac{bd+ce-bef-cdf-\epsilon\sqrt{\Delta_1\Delta_2}}{1-f^{2}} \right)^{\!\!2},
\]
where $\epsilon= \operatorname{sign}(bd+ce-bef-cdf)$, and it is never attained. Hence, we add a small perturbation $\epsilon\varepsilon$ to $a_0$ to ensure that $|a_0+\epsilon\varepsilon| \leq |a|$ and that the determinant of $\Sigma$ with $a$ replaced by $a_0+\epsilon\varepsilon$ is positive. In that case, \Cref{lem - ineq 2} yields
\[
\epsilon (a_0 + \epsilon\varepsilon) =\frac{|bd+ce-bef-cdf|-\sqrt{\Delta_1\Delta_2}}{1-f^{2}} + \varepsilon \geq \frac{2(1-f)\sqrt{bcde}-\sqrt{\delta_1\delta_2}}{1-f^2} + \varepsilon = a' > 0.
\]
Therefore, for $\varepsilon>0$ small enough, we have $a^2 \geq (a_0+\epsilon\varepsilon)^2 \geq a'^2$, which finally implies that
\begin{align*}
    \Sigma[\varphi_k] &= \frac{\bigl((1-b^2)(1-c^2)(1-d^2)(1-e^2)\bigr)^{\!k+1}(1-f^2)^{4k+1}}{(1-a^2)\Delta_1^{2k+1}\Delta_2^{2k+1}} \\
    &\geq \frac{(1-bc)^{2k+2}(1-de)^{2k+2}(1-f^2)^{4k+1}}{(1-a'^2)\delta_1^{2k+1}\delta_2^{2k+1}}
    \,=\, \Sigma'[\varphi_k]. \qedhere
\end{align*}
\end{proof}

\subsection{Analytic bounds}
\label{sec: 1627 proof}

({\itshape\textsf{Lean: GMM.Infimum}}).
In this section, we finally complete the proof of \Cref{thm: varphi_k}. \Cref{prop - full reduction} ensures that it suffice to consider the minimum of $[\varphi_k]$ over matrices $\Sigma'\in \PD_4'$, where $\PD_4'$ is the set of $4\times 4$ positive definite matrices of the form
\[
\Sigma'=\begin{pmatrix}
    1&a&u&u\\a&1&v&v\\u&v&1&f\\u&v&f&1
\end{pmatrix} \quad\text{ with }\quad a = \begin{cases}
        0 \qquad &\text{if }~~ 2(u^2+v^2)<1+f;\\
        \frac{2uv-\sqrt{(1+f-2u^2)(1+f-2v^2)}}{1+f}+\varepsilon &\text{if }~~ 2(u^2+v^2)\geq 1+f.
    \end{cases}
\]
and $u,v,f\geq 0$. Under this assumption, we may define for simplicity
\[
s:=\frac{1-f}{2}\in(0,1/2],\qquad x:=\frac{u^2}{1-s-u^2}
\quad\text{ and }\quad
y:=\frac{v^2}{1-s-v^2}.
\]
With these changes of variables, we find that $\Sigma'\in \PD_4'$ if and only if $\Sigma'[234],\Sigma'[134]>0$, that is $u^2,v^2<1-s$. Consequently, we have $x,y>0$. Moreover, the condition $2(u^2+v^2)<1+f$ simplifies to $xy<1$, so that the objective function becomes simply, as $\varepsilon\to 0$,
\[
\Sigma'[\varphi_k] =\frac{(1+sx)^{2k+2}(1+sy)^{2k+2}}{4s(1-s)\,h(x,y)},
\]
where
\[
h(x,y) = \begin{cases}
        (1+x)(1+y) \qquad &\text{if }~~ xy<1;\\
        (\sqrt{x}+\sqrt{y})^{2} &\text{if }~~ xy\geq 1.
    \end{cases}
\]
Hence, the reduced problem naturally splits in two cases. On the region $xy\geq 1$, the sharp lower bound is the boundary value already obtained from the curve \eqref{eq: inf witness}. On the complementary region $xy<1$, the relevant lower bound depends on whether $k\leq 1/2$ or $k\geq 1/2$. The next propositions establish these two regional estimates, which are then assembled in the proof of \Cref{thm: varphi_k}.

\begin{proposition}\label{prop - main2}
    The infimum of $[\varphi_k]$ over all $\Sigma'\in \PD_4'$ such that $xy\geq 1$ is $\frac{(4k+4)^{4k+4}}{16(4k+3)^{4k+3}}$.
\end{proposition}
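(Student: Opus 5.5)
The plan is to prove a lower bound by dropping the factor $1-s$ and rescaling away $s$, so that a symmetric two-variable inequality remains. For sharpness I will use the curve \eqref{eq: inf witness}.

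\emph{Upper bound on the infimum.} The curve \eqref{eq: inf witness} already gives $\lim_{t\to1^-}\Sigma(t)[\varphi_k]=\frac{(4k+4)^{4k+4}}{16(4k+3)^{4k+3}}$. So I only need the matching limit inside the region $xy\ge 1$ of the reduced problem. Alternatively, I take $x=y=t$ directly in the formula for $\Sigma'[\varphi_k]$. Then
\[
\Sigma'[\varphi_k]=\frac{(1+st)^{4k+4}}{16\,s(1-s)\,t}.
\]
Minimizing in $t$ gives $st=\frac{1}{4k+3}$, and the value is $\frac{(4k+4)^{4k+4}}{16(4k+3)^{4k+3}(1-s)}$. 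Letting $s\to0^+$, the admissibility condition $xy=t^2=\frac{1}{(4k+3)^2s^2}\ge1$ holds automatically. So the infimum over the region is at most the claimed value.

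\emph{Lower bound.} On $xy\ge1$ we have $\Sigma'[\varphi_k]=\frac{(1+sx)^{m}(1+sy)^{m}}{4s(1-s)(\sqrt x+\sqrt y)^2}$ with $m=2k+2$. Using $1-s\le1$ and substituting $X=sx$, $Y=sy$, this is at least
\[
\frac{(1+X)^m(1+Y)^m}{4(\sqrt X+\sqrt Y)^2}.
\]
The parameter $s$ has disappeared and the constraint $xy\ge1$ is no longer needed. It therefore suffices to prove, for all $X,Y>0$,
\[
(1+X)^m(1+Y)^m\ \ge\ \frac{K}{4}(\sqrt X+\sqrt Y)^2,\qquad K=\frac{(2m)^{2m}}{(2m-1)^{2m-1}}.
\]
Indeed, this gives the bound $K/16=\frac{(4k+4)^{4k+4}}{16(4k+3)^{4k+3}}$. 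On the diagonal $X=Y$ the inequality reads $(1+X)^{2m}\ge KX$. This is the elementary weighted AM-GM fact that $(1+X)^{2m}/X$ is minimized at $X=\frac1{2m-1}$.

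\emph{Reduction to the diagonal (expected main obstacle).} Write $X=a^2$, $Y=b^2$, and fix $\sigma=a+b$ and $p=ab\in(0,\sigma^2/4]$. Then
\[
(1+X)(1+Y)=1+\sigma^2-2p+p^2,
\]
which is decreasing in $p$ for $p<1$. I will split into two cases.

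\begin{itemize}
\item If $\sigma\le2$, the minimum over $p$ of the left side for fixed $\sigma$ occurs at the diagonal $p=\sigma^2/4$. This reduces the claim to the one-variable inequality above.
\item If $\sigma>2$, the minimum is at $p=1$, with value $\sigma^2$. Then I need $\sigma^{2m}\ge\frac K4\sigma^2$, i.e.\ $\sigma^{2m-2}\ge K/4$. Since $\sigma>2$, it suffices to check $4^{m}\ge K$. This is a routine comparison, because $K=2m\bigl(1+\frac1{2m-1}\bigr)^{2m-1}\le 2me$, and this is at most $4^m$ for $m\ge2$.
\end{itemize}

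The delicate points are getting the monotonicity in $p$ with the correct case split and confirming this last constant comparison uniformly in $k\ge0$. I expect this step to be the main obstacle.

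Finally, the bound is never attained, because it requires $s\to0$. Together with the matching upper bound, this proves the proposition.
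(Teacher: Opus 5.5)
Your proposal is correct and is essentially the paper's proof. Your $\sigma=\sqrt{sx}+\sqrt{sy}$ and $p=s\sqrt{xy}$ are the paper's $2\sqrt{t}$ and $r$, and the argument has the same ingredients: the AM-GM constraint $p\le\sigma^2/4$, pushing $p$ toward $1$, the case split at $\sigma=2$ (i.e.\ $t=1$), and the one-variable minimization at $X=\frac{1}{2m-1}$. The differences are cosmetic: you drop the factor $1-s$ at the start rather than the end; you check $4^m\ge K$ via $K<2me$, which is also just the diagonal inequality $(1+X)^{2m}\ge KX$ at $X=1$ and is how the paper's comparison with $H(1)$ handles $t\ge1$; and you write out the sharpness witness $x=y=\frac{1}{(4k+3)s}$, $s\to0^+$, which the paper leaves implicit.
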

\begin{proof}
Let $a:=2k+2$, $r:=s\sqrt{xy}$ and $t:=\frac{s}{4}(\sqrt{x}+\sqrt{y})^2$. By AM-GM, we have $t\ge r$. Moreover,
\begin{equation}\label{eq - function_min}
    \Sigma'[\varphi_k]=\frac{(1+sx)^a(1+sy)^a}{4s(1-s)(\sqrt{x}+\sqrt{y})^2}
=
\frac{\bigl(4t+(1-r)^2\bigr)^a}{16(1-s)t}.
\end{equation}
We seek to minimize \eqref{eq - function_min} under the condition $t\ge r$. For fixed $t$, the quantity
$4t+(1-r)^2$ is minimized by taking $r$ as close as possible to $1$. If $0<t\le 1$, this gives $r=t$. Hence,
\[
\Sigma'[\varphi_k]=
\frac{\bigl(4t+(1-r)^2\bigr)^a}{16(1-s)t} \geq \frac{\bigl(4t+(1-t)^2\bigr)^a}{16(1-s)t} = \frac{(1+t)^{2a}}{16(1-s)t}.
\]
If $t\ge 1$, the minimum is obtained at $r=1$, giving
\[
\Sigma'[\varphi_k]=
\frac{\bigl(4t+(1-r)^2\bigr)^a}{16(1-s)t} \geq \frac{(4t)^{a-1}}{4(1-s)} \geq \frac{4^{a-2}}{1-s},
\]
which is exactly the value of $\frac{(1+t)^{2a}}{16(1-s)t}$ at $t=1$. It thus remains to minimize the function
\[
H(t)=\frac{(1+t)^{2a}}{16(1-s)t},
\qquad 0<t\le 1.
\]
A direct computation yields $\frac{H'(t)}{H(t)} = \frac{(2a-1)t-1}{t(1+t)}$, which implies that the minimum is attained at $t=\frac{1}{2a-1} \in (0,1)$. Moreover, we have $1-s\leq  1$ for $0<s\leq 1/2$. Therefore, it follows that
\[
\inf_{\Sigma'\in\PD_4',\,xy\geq1} \Sigma'[\varphi_k]
=
\frac{\big(1+\frac{1}{2a-1}\big)^{2a}}
{16\cdot \frac{1}{2a-1}}
=
\frac{(2a)^{2a}}{16(2a-1)^{2a-1}} = \frac{(4k+4)^{4k+4}}{16(4k+3)^{4k+3}}.\qedhere
\]
\end{proof}

\begin{lemma}\label{lem - keyineq}
If $x,y> 0$, $0<s\le 1/2$, and $xy<1$, then $\Sigma'[\varphi_{1/2}] \geq 1$.
\end{lemma}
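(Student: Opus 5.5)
The plan is to prove the polynomial inequality hidden in the statement directly. For $k=\tfrac12$ we have $2k+2=3$. On the region $xy<1$ we have $h(x,y)=(1+x)(1+y)$. So the claim $\Sigma'[\varphi_{1/2}]\ge 1$ is equivalent to
\[
  (1+sx)^3(1+sy)^3 \;\ge\; 4s(1-s)\,(1+x)(1+y), \qquad x,y>0,\; xy<1,\; 0<s\le \tfrac12 .
\]
The first step is to record why the constraint $xy<1$ is essential. The one-variable ratio $(1+sx)^3/(1+x)$ is minimized at $x=(1-3s)/(2s)$ (when $s<\tfrac13$), with minimum value $\tfrac{27}{4}s(1-s)^2$. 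Squaring this and comparing with $4s(1-s)$ fails for small $s$. Hence no bound of the form ``each factor separately $\ge 2\sqrt{s(1-s)}$'' can work, and the unconstrained minimizer $x=y$ is large, in particular $xy>1$, exactly when $s$ is small.

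\emph{Step 1: reduce to the diagonal $x=y$.} Pass to logarithmic coordinates $x=e^{p}$, $y=e^{q}$; the constraint becomes the half-plane $p+q<0$. Consider
\[
  F(p,q)=3\log(1+se^{p})+3\log(1+se^{q})-\log(1+e^{p})-\log(1+e^{q}).
\]
It is a sum of two copies of the same one-variable function $\psi(p)=3\log(1+se^p)-\log(1+e^p)$. I would first try to show that $\psi$ is convex on the relevant range, or at least that $F$ restricted to each line $p+q=\text{const}$ is minimized at $p=q$. Either fact gives $F(p,q)\ge 2\psi\bigl((p+q)/2\bigr)$, which replaces $(x,y)$ by $(\sqrt{xy},\sqrt{xy})$ and preserves $xy<1$. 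If global convexity of $\psi$ fails (its second derivative is $3se^p/(1+se^p)^2-e^p/(1+e^p)^2$, which changes sign), I would instead use the fact that $\psi''$ has a single sign change. Then on a line $p+q=c$ with $c<0$ the minimum is either at $p=q$ or at an endpoint $p\to-\infty$. The endpoint case is the one-variable inequality $(1+sx)^3\ge 4s(1-s)(1+x)$, which holds for $x\le 1$, and near $y=0$ we automatically have $x$ bounded only by $1/y$; this case needs a separate check.

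\emph{Step 2: the diagonal inequality.} With $x=y=z\in(0,1)$, taking square roots reduces the claim to
\[
  (1+sz)^3\ \ge\ 2\sqrt{s(1-s)}\,(1+z).
\]
The left side minus the right side is convex in $z$ for fixed $s$. Its derivative at $z=1$ is $3s(1+s)^2-2\sqrt{s(1-s)}$. For small $s$ this is negative, so the minimum over $(0,1]$ is at $z=1$. There the inequality reads $(1+s)^6\ge 16s(1-s)$, a one-variable polynomial inequality on $(0,\tfrac12]$. It holds with room to spare: at $s=\tfrac15$ the left side is about $2.99$ and the right side $2.56$. In the remaining range of $s$, the critical point lies inside $(0,1)$, and I would substitute it back and verify the resulting inequality in $s$ alone, by clearing radicals and checking the sign of a polynomial on $[0,\tfrac12]$, for example via Sturm sequences or an SOS certificate.

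The main obstacle I expect is Step~1: justifying the symmetrization $x=y$ near the boundary $xy\to1$ when $s$ is small, where the margin is thinnest. If the convexity argument does not close cleanly, the fallback is to treat $xy\to 1$ directly. At $xy=1$ we have $(1+x)(1+y)=(\sqrt x+\sqrt y)^2$, so the $xy\ge1$ formula of \Cref{prop - main2} applies there, with the factor $1-s$ kept rather than discarded. That bound is then combined with the interior critical-point analysis.
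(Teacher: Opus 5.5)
Your proposal has a genuine gap: Step~1, the reduction to the diagonal, is false. It fails precisely in the small-$s$ regime you flagged as the hard one.

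With $x=e^p$, the sign of $\psi''(p)$ is the sign of $3s(1+x)^2-(1+sx)^2=s(3-s)x^2+4sx+3s-1$. This is increasing in $x>0$ and equals $-(1-10s+s^2)$ at $x=1$. Hence for $s<5-2\sqrt6\approx 0.101$ the function $\psi$ is strictly concave on the whole range $x\le 1$. For distinct $p,q\le 0$ you then get $F(p,q)<2\psi\bigl(\frac{p+q}{2}\bigr)$, the reverse of what you need. The diagonal point is therefore a local \emph{maximum}, not a minimum, on each hyperbola $xy=\text{const}<1$. Concretely, at $s=\frac1{10}$ the objective is about $1.593$ at $(x,y)=(1,\frac14)$ but about $1.654$ at $(\frac12,\frac12)$.

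Your fallback does not rescue this. Along $p+q=c$, the ends $p\to\pm\infty$ send one of $x,y$ to $+\infty$, where $(1+sy)^3/(1+y)\to\infty$, so no endpoint is a minimizer. For $c<0$ and $s<5-2\sqrt6$, the minimizer on the line is an interior, asymmetric critical point with one coordinate past the inflection point of $\psi$. So your diagonal analysis in Step~2, even if completed, does not bound the general case, and the $xy\to 1$ fallback is too vague to close the gap.

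The idea you are missing is to parametrize each hyperbola $xy=r^2$ by $t=(1+x)(1+y)\ge (1+r)^2$ rather than by $p-q$. Then $(1+sx)(1+sy)=st+B$ with $B=(1-s)(1-sr^2)$, which is affine in $t$. The objective becomes the one-variable function $(st+B)^3/\bigl(4s(1-s)t\bigr)$. Its unconstrained minimizer $t=B/(2s)$ lies off the diagonal for small $s$, which is exactly the phenomenon your Step~1 misses.

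The paper never needs to locate this minimizer. It applies AM--GM in the form $(st+B)^2\ge 4stB$, which cancels the denominator and leaves $(1-sr^2)(st+B)$. This is increasing in $t$, so $t\ge (1+r)^2$ gives the lower bound $(1-sr^2)(1+sr)^2$. Expanding shows this is at least $1$ for $0<r<1$ and $0<s\le\frac12$.
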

\begin{proof}
Let $r:=\sqrt{xy}$ and $t:=(1+x)(1+y)$. Then $0< r<1$ and, by the AM-GM inequality, we have both $t\geq (1+r)^2$ and
\begin{equation}\label{eq - AMGM2}
    \begin{aligned}
    (1+sx)^{2}(1+sy)^{2} = (st+(1-s)(1-sr^2))^2 \ge 4s(1-s)t(1-sr^2).
\end{aligned}
\end{equation}
Therefore, it follows that
\begin{align*}
    \Sigma'[\varphi_{1/2}] &= \frac{(st+(1-s)(1-sr^2))^2}{4s(1-s)t} (st+(1-s)(1-sr^2))\\
    &\geq (1-sr^2)\bigl(s(1+r)^2+(1-s)(1-sr^2)\bigr) \\
    &= 1+sr\bigl((1-r)(2+r)+r^{2}(1-2s)+rs(1-r^{2}s)\bigr) \\
    &\geq 1. \qedhere
\end{align*}
\end{proof}

\begin{proposition}\label{prop - inf 1}
    The infimum of $\varphi_k$ over all $\Sigma'\in \PD_4'$ such that $xy<1$ is $1$ if $k\geq 1/2$.
\end{proposition}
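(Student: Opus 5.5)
The plan has two parts: a lower bound of $1$ on the region $xy<1$ when $k\ge 1/2$, and a family of matrices in that region whose values approach $1$.

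\emph{Lower bound.} On the region $xy<1$ we have $h(x,y)=(1+x)(1+y)$. In the limit $\varepsilon\to 0$ the objective is
\[
\Sigma'[\varphi_k]=\frac{(1+sx)^{2k+2}(1+sy)^{2k+2}}{4s(1-s)(1+x)(1+y)} .
\]
Write $P := (1+sx)(1+sy)\ge 1$, which holds because $s,x,y>0$. Then
\[
\Sigma'[\varphi_k]=\Sigma'[\varphi_{1/2}]\cdot P^{\,2k-1}.
\]
For $k\ge 1/2$ the exponent $2k-1$ is nonnegative, so $P^{2k-1}\ge 1$. \Cref{lem - keyineq} gives $\Sigma'[\varphi_{1/2}]\ge 1$, and therefore $\Sigma'[\varphi_k]\ge 1$ on the whole region. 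The same monotonicity also follows from the paper's remark that $\Sigma[\varphi_k]$ is increasing in $k$, because each Koteljanskii factor is at least $1$. For $\varepsilon>0$ I would argue the same way: on the branch $xy<1$ the entry $a'=0$ does not depend on $\varepsilon$, so the formula is exact and no limit is needed.

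\emph{Sharpness.} The value $1$ has to be approached within $\PD_4'$ with $xy<1$. The identity matrix is the natural candidate, since it gives $\varphi_k=1$. However, $\PD_4'$ as parametrized requires $u,v\ge 0$, $0\le f<1$ and hence $s\in(0,1/2]$, and the identity corresponds to $u=v=f=0$, i.e.\ $s=1/2$ and $x=y=0$. That point lies at $x=y=0$, on the edge of the open region $x,y>0$. I would therefore take $x=y=\eta\to 0^+$ with $s=1/2$. Then
\[
\Sigma'[\varphi_k]=\frac{(1+\eta/2)^{4k+4}}{(1+\eta)^2}\longrightarrow 1 .
\]
If $x=y=0$ is in fact admissible, the value $1$ is simply attained there.

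\emph{Main obstacle.} The only real work is confirming that the factorization $P^{2k-1}$ is exact. This means checking that the exponent $2k+2$ splits as $3+(2k-1)$ in the formula for $\varphi_{1/2}$, and that $P\ge 1$ holds on the domain. Both are immediate, so I expect no serious difficulty.
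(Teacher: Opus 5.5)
Your proposal is correct and follows the paper's proof: the lower bound is the paper's reduction $\Sigma'[\varphi_k]\ge\Sigma'[\varphi_{1/2}]$ (your factor $P^{2k-1}\ge 1$) combined with \Cref{lem - keyineq}. For sharpness, the paper takes $x=y=\frac{1-2s}{s}$ with $s\to 1/2$, whereas you fix $s=1/2$ and send $x=y=\eta\to 0^+$; both curves tend to the identity matrix, so the difference is only cosmetic.
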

\begin{proof}
Since $s,x,y\geq 0$ and since $k\geq 1/2$, we have
\[
\Sigma'[\varphi_{k}] = \frac{(1+sx)^{2k+2}(1+sy)^{2k+2}}{4s(1-s)(1+x)(1+y)} \geq \frac{(1+sx)^{3}(1+sy)^{3}}{4s(1-s)(1+x)(1+y)} = \Sigma'[\varphi_{1/2}].
\]
It thus directly follows from \Cref{lem - keyineq} that $\Sigma'[\varphi_{k}] \geq 1$. To show that the bound is indeed realized, simply choose $x=y=\frac{1-2s}{s}$ to obtain $\Sigma'[\varphi_{k}] = 2s(2-2s)^{4k+1} \to 1$ as $s\to 1/2$.
\end{proof}

\begin{lemma}\label{lem - keyineq2}
    For every $x,y\ge 0$, $0<s\le 1/2$ such that $xy<1$, we have $\Sigma'[\varphi_0] \geq 3/4$.
\end{lemma}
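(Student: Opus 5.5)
We follow the substitution from the proof of \Cref{lem - keyineq}. Put $r:=\sqrt{xy}\in[0,1)$ and $t:=(1+x)(1+y)$. By AM--GM, $t\ge(1+r)^2$. The identity
\[
(1+sx)(1+sy)=st+(1-s)(1-sr^2)
\]
gives
\[
\Sigma'[\varphi_0]=\frac{\bigl(st+A\bigr)^2}{4s(1-s)\,t},\qquad A:=(1-s)(1-sr^2)>0 .
\]
For fixed $s$ and $r$, the function $t\mapsto (st+A)^2/t$ decreases on $(0,A/s]$ and increases on $[A/s,\infty)$. So we minimise it over the admissible range $t\ge(1+r)^2$ and split into two cases, according to whether the unconstrained minimiser $t^\ast=A/s$ is admissible.

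\emph{Case 1: $sr^2\le 1/4$.} Here we use only the estimate $(st+A)^2\ge 4stA$, valid for all $t$. It gives
\[
\Sigma'[\varphi_0]\ \ge\ \frac{4sA}{4s(1-s)}=1-sr^2\ \ge\ \tfrac34 .
\]

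\emph{Case 2: $sr^2>1/4$.} Then $s>1/4$ and $r>1/(2\sqrt s)$. We first check that $t^\ast<(1+r)^2$, i.e.\ $A<s(1+r)^2$. Indeed,
\[
A<\tfrac34(1-s)\le\tfrac{9}{16}<1\le\bigl(\sqrt s+\tfrac12\bigr)^2<s(1+r)^2 .
\]
Hence the constrained minimum over $t$ sits at the boundary $t=(1+r)^2$. With $P:=s(1+r)^2$ it therefore suffices to prove
\[
\bigl(P+A\bigr)^2\ \ge\ 3s(1-s)(1+r)^2
\]
on the region $1/4<s\le 1/2$, $1/(2\sqrt s)<r<1$. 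This is a polynomial inequality in two variables on an explicit semialgebraic box-like region.

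The main obstacle is this last inequality, because the margin is thin. Already the crude estimates $P\ge(\sqrt s+\tfrac12)^2$ and $A\ge(1-s)^2$, at $s=1/2$, only give about $0.73$. So we must keep the dependence on $r$.

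My first attempt would be to view the difference
\[
D(s,r):=\bigl(s(1+r)^2+(1-s)(1-sr^2)\bigr)^2-3s(1-s)(1+r)^2
\]
as a function of $r$ for fixed $s$, and to show it is increasing in $r$ on the region. The derivative in $r$ is
\[
2(P+A)\bigl(2s(1+r)-2s(1-s)r\bigr)-6s(1-s)(1+r),
\]
and the needed positivity should follow from $P+A>1$. If this monotonicity holds, we only need to check the boundary curve $r=1/(2\sqrt s)$, i.e.\ $sr^2=1/4$. There $A=\tfrac34(1-s)$ and $P=(\sqrt s+\tfrac12)^2$, so the problem becomes a one-variable polynomial inequality in $\sqrt s\in(1/2,1/\sqrt2]$. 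This can be settled by elementary bounds; if needed, a CAD or Lean check in the spirit of the other computations in the paper will do.

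If monotonicity in $r$ fails, the fallback is a direct sum-of-squares or CAD certificate for $D\ge0$ on the two-parameter region. Combined with Case 1, this yields $\Sigma'[\varphi_0]\ge 3/4$ whenever $xy<1$.
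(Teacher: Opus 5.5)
Your reduction is sound. The identity $(1+sx)(1+sy)=st+(1-s)(1-sr^2)$ is correct. Case~1 is the paper's argument for $s<1/4$, which you even apply on the larger region $sr^2\le 1/4$. In Case~2 the minimum over $t\ge(1+r)^2$ does sit at $t=(1+r)^2$. The gap is that Case~2 is never closed, and the route you propose for closing it fails.

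Using $\partial_r(P+A)=2s(1+sr)$ and $P+A=(1+sr)^2$, your derivative simplifies to $\partial_r D=2s\bigl(2(1+sr)^3-3(1-s)(1+r)\bigr)$. This is negative on part of the region. At $s=0.26$, on the whole admissible interval $1/(2\sqrt s)<r<1$, one has $2(1+sr)^3\le 4.01$ while $3(1-s)(1+r)\ge 4.39$. So $D$ is not increasing in $r$ there, and checking the curve $sr^2=1/4$ does not control the region. Your hint that positivity ``should follow from $P+A>1$'' is also too weak: it would need $2(1+sr)\ge 3(1-s)(1+r)$, which fails near $s=0.3$, $r=1$. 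Falling back on an unspecified SOS or CAD certificate defers the step rather than proving it.

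The missing observation is that $P+A=s(1+r)^2+(1-s)(1-sr^2)=(1+sr)^2$; the boundary $t=(1+r)^2$ is just $x=y=r$. Your target inequality is therefore $(1+sr)^4\ge 3s(1-s)(1+r)^2$. This follows at once from the identity $(1+sz)^2-4s(1-s)(1+z)=\bigl(sz-(1-2s)\bigr)^2\ge 0$ at $z=r$, together with $16s(1-s)\ge 3$ for $1/4\le s\le 1/2$.

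In fact the same identity applied to $z=x$ and $z=y$ separately gives $\Sigma'[\varphi_0]\ge 4s(1-s)\ge 3/4$ directly for every $s\ge 1/4$, with no minimisation over $t$ needed. This is exactly the paper's argument for $s\ge 1/4$, and combined with your Case~1 it completes the proof.
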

\begin{proof}
    First suppose that $s\ge 1/4$, and observe that for every $z\ge 0$,
    \[
    (1+sz)^2-4s(1-s)(1+z) = \bigl(sz-(1-2s)\bigr)^2\ge 0.
    \]
    Hence, $\frac{(1+sz)^2}{1+z}\ge 4s(1-s).$ Applying this bound with $z=x$ and $z=y$, we get
    \[
    \Sigma'[\varphi_0] = \frac{(1+sx)^2(1+sy)^2}{4s(1-s)(1+x)(1+y)}\ge 4s(1-s) \geq \frac34.
    \]
    Moreover, if $0<s<1/4$, then inequality \eqref{eq - AMGM2} also ensures that
    \begin{align*}
        \Sigma'[\varphi_0] &= \frac{(1+sx)^2(1+sy)^2}{4s(1-s)(1+x)(1+y)} \geq 1-sxy \geq 1-s \geq \frac{3}{4}.\qedhere
    \end{align*}
\end{proof}

\begin{proposition}\label{prop - better}
    The infimum of $\varphi_k$ over all $\Sigma'\in \PD_4'$ such that $xy< 1$ is bounded below by $\frac{(4k+4)^{4k+4}}{16(4k+3)^{4k+3}}$ if $0\leq k\leq 1/2$.
\end{proposition}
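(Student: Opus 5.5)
The plan is to avoid a direct two-variable minimization for each $k$. Instead I would interpolate between the two endpoint estimates already proved on the region $xy<1$: \Cref{lem - keyineq2} gives $\Sigma'[\varphi_0]\ge 3/4$, and \Cref{lem - keyineq} gives $\Sigma'[\varphi_{1/2}]\ge 1$. The key observation is that, for a \emph{fixed} matrix $\Sigma'\in\PD_4'$ (that is, fixed $s,x,y$), the quantity $\log\Sigma'[\varphi_k]$ is affine in $k$. Indeed, with $P:=(1+sx)(1+sy)$,
\[
\Sigma'[\varphi_k]=\Sigma'[\varphi_0]\cdot P^{2k}.
\]
For $k=(1-\theta)\cdot 0+\theta\cdot\frac12$ with $\theta=2k\in[0,1]$, this gives the pointwise identity
\[
\Sigma'[\varphi_k]=\Sigma'[\varphi_0]^{1-2k}\,\Sigma'[\varphi_{1/2}]^{2k}.
\]
Both factors are positive. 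Applying the two lemmas then yields, uniformly over $x,y>0$, $0<s\le 1/2$, $xy<1$,
\[
\Sigma'[\varphi_k]\ \ge\ (3/4)^{1-2k}\qquad (0\le k\le 1/2).
\]

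It then remains to prove the purely one-variable inequality
\[
(1-2k)\log\tfrac34\ \ge\ \ell(k):=(4k+4)\log(4k+4)-(4k+3)\log(4k+3)-\log 16,\qquad k\in[0,\tfrac12],
\]
whose right-hand side is $\log\frac{(4k+4)^{4k+4}}{16(4k+3)^{4k+3}}$. I would proceed in three steps.
\begin{enumerate}
\item Compute $\ell'(k)=4\log\frac{4k+4}{4k+3}$ and $\ell''(k)=16\bigl(\frac1{4k+4}-\frac1{4k+3}\bigr)<0$. So $\ell$ is concave, and the difference $D(k):=(1-2k)\log\frac34-\ell(k)$ is convex.
\item Check the right endpoint. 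There $D'(\tfrac12)=-2\log\frac34-4\log\frac65=2\log\frac43-4\log\frac65$, which is negative: equivalently $(4/3)^2=16/9<(6/5)^4=1296/625$.
\item Conclude. A convex function whose derivative at the right endpoint is negative is decreasing on the whole interval, so $\min_{[0,1/2]}D=D(\tfrac12)=-\ell(\tfrac12)=\log\frac{16\cdot 5^5}{6^6}=\log\frac{50000}{46656}>0$.
\end{enumerate}
Hence $D>0$ on $[0,\tfrac12]$. This proves $\Sigma'[\varphi_k]\ge(3/4)^{1-2k}>\frac{(4k+4)^{4k+4}}{16(4k+3)^{4k+3}}$, which is in fact strictly stronger than the claim.

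The main obstacle I anticipate is not the interpolation step, which is exact, but the elementary calculus comparison of step 3 above. The function $\ell$ is concave, so merely comparing the values at $k=0$ and $k=1/2$ is not sufficient, and one really needs the sign of the derivative at $k=1/2$ (or an equivalent convexity argument). If the derivative check were to fail, the fallback would be to split $[0,\tfrac12]$ into a few subintervals. On each, I would bound $\ell$ from above by its tangent line and compare it with the line $(1-2k)\log\frac34$.

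One should also note the role of $\varepsilon$. The reduced formula for $\Sigma'[\varphi_k]$ on $xy<1$ does not involve $\varepsilon$ at all, since $a=0$ there. Consequently the two cited lemmas and the interpolation identity apply verbatim.
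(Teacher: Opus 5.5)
Your proposal is correct and follows essentially the same route as the paper: the pointwise log-linear interpolation $\Sigma'[\varphi_k]=\Sigma'[\varphi_0]^{1-2k}\,\Sigma'[\varphi_{1/2}]^{2k}\ge (3/4)^{1-2k}$ from \Cref{lem - keyineq,lem - keyineq2}, followed by a one-variable calculus comparison. The paper does that comparison in the variable $x=4k+3$, bounding $G'(x)=\log(2/\sqrt{3})-\log(1+1/x)$ by its negative value at $x=5$ and evaluating $G(5)=\log(3125/2916)>0$. This is exactly your convexity plus right-endpoint-derivative argument, and your endpoint value agrees since $3125/2916=50000/46656$.
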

\begin{proof}
Observe that\vspace{-4pt}
\[
\Sigma'[\varphi_k] = \Sigma'[\varphi_{k_1}]^{\frac{k_2-k}{k_2-k_1}}\cdot \Sigma'[\varphi_{k_2}]^{\frac{k-k_1}{k_2-k_1}}
\qquad (k_1<k<k_2).
\]
\Cref{lem - keyineq,lem - keyineq2} then imply that
\begin{align*}
    \Sigma'[\varphi_k] = \Sigma'[\varphi_{0}]^{\frac{1/2-k}{1/2-0}}\cdot \Sigma'[\varphi_{1/2}]^{\frac{k-0}{1/2-0}} \geq \left( \frac{3}{4}\right)^{\!1-2k}\!\left( 1\right)^{2k} = \left( \frac{4}{3}\right)^{\!2k-1}.
\end{align*}
Let $x=4k+3$, so that $3\leq x \leq 5$. We seek to show $\left( \frac{4}{3}\right)^{2k-1} \geq \frac{(4k+4)^{4k+4}}{16(4k+3)^{4k+3}}$, that is
\begin{equation}\label{eq - equiv_form}
    G(x):=(x-5)\log\bigl(2/\sqrt{3}\bigr)-(x+1)\log(x+1)+x\log(x)+\log(16) \geq 0.
\end{equation}
A direct computation yields
\[
G'(x)= \log\!\left(\frac{2}{\sqrt{3}}\right)-\log\!\left(1+\frac1x\right) \leq  \log\!\left(\frac{2}{\sqrt{3}}\right)-\log\!\left(1+\frac15\right) = \log\!\left(\frac{5}{3\sqrt{3}}\right)<0.
\]
Therefore, $G(x) \geq G(5)=\log\!\left(\frac{3125}{2916}\right)>0$ on $[3,5]$, which is precisely \eqref{eq - equiv_form}.
\end{proof}

We may now finally prove \Cref{thm: varphi_k}, stating that the infimum of $\varphi_k$ over all $\Sigma'\in \PD_4'$ is
\[
\begin{cases}
    \frac{(4k+4)^{4k+4}}{16(4k+3)^{4k+3}} \quad &\text{if }~ 0 \leq k < k_*; \\
    1 \quad &\text{if }~ k \geq k_*,
\end{cases}
\]
where $k_* \approx 0.59829$ is the unique positive root of the equation $(4t+4)^{4t+4}=16(4t+3)^{4t+3}$.

\begin{proof}[Proof of \Cref{thm: varphi_k}]
    For $0 \leq k \leq 1/2$, \Cref{prop - better} shows that the infimum of $\Sigma'[\varphi_k]$ when $xy<1$ is larger than the infimum of $\Sigma'[\varphi_k]$ when $xy\geq1$, which is
    \[
    \frac{(4k+4)^{4k+4}}{16(4k+3)^{4k+3}}.
    \]
    Moreover, for $k\geq 1/2$, the infimum when $xy\geq 1$ is once again $\frac{(4k+4)^{4k+4}}{16(4k+3)^{4k+3}}$, while \Cref{prop - inf 1} ensures that the infimum when $xy<1$ is $1$. Therefore, if $k\geq 1/2$, the infimum of $\Sigma'[\varphi_k]$ is
    \[
    \min\!\left\{1, \frac{(4k+4)^{4k+4}}{16(4k+3)^{4k+3}} \right\}.
    \]
    It is easily verified that $\frac{(4k+4)^{4k+4}}{16(4k+3)^{4k+3}}$ is strictly increasing for all $k\geq 0$. Hence, the infimum is realized by this function if $k\leq k_*$ and by $1$ otherwise, which finally completes the proof.
\end{proof}

\begin{remark}[On the limiting value]
\Cref{thm: varphi_k}, shows that the infimum of $[\varphi]$ is approached along a curve of correlation matrices converging to a rank-one matrix. The Curve Selection Lemma \cite[Theorem 3.19]{BasuPollackRoy2006} guarantees that one may take such a curve to have Puiseux series entries with real-algebraic coefficients, as in~\eqref{eq: inf witness}. Along this curve, every principal minor has a Puiseux expansion with algebraic leading coefficient. The exponent of the first nonzero term determines whether the corresponding ratio tends to 0, to $+\infty$, or to a finite nonzero limit. In the last case, however, the limit depends on the leading coefficients themselves, and is therefore a priori only algebraic. Tropical methods as developed in \cite{ARVY} indicate whether a given functional is bounded, but do not carry information about the infimum. Thus, we cannot offer a satisfying explanation for why the infimum of $[\varphi]$ is the rational number $\sfrac{16}{27}$.
\end{remark}

\section*{Acknowledgements}

\setlength{\intextsep}{5pt}%
\setlength{\columnsep}{5pt}%
\begin{wrapfigure}{R}{0.165\linewidth}
\vspace{-.5\baselineskip}%
\centering%
\href{https://doi.org/10.3030/101110545}{%
\includegraphics[width=0.9\linewidth]{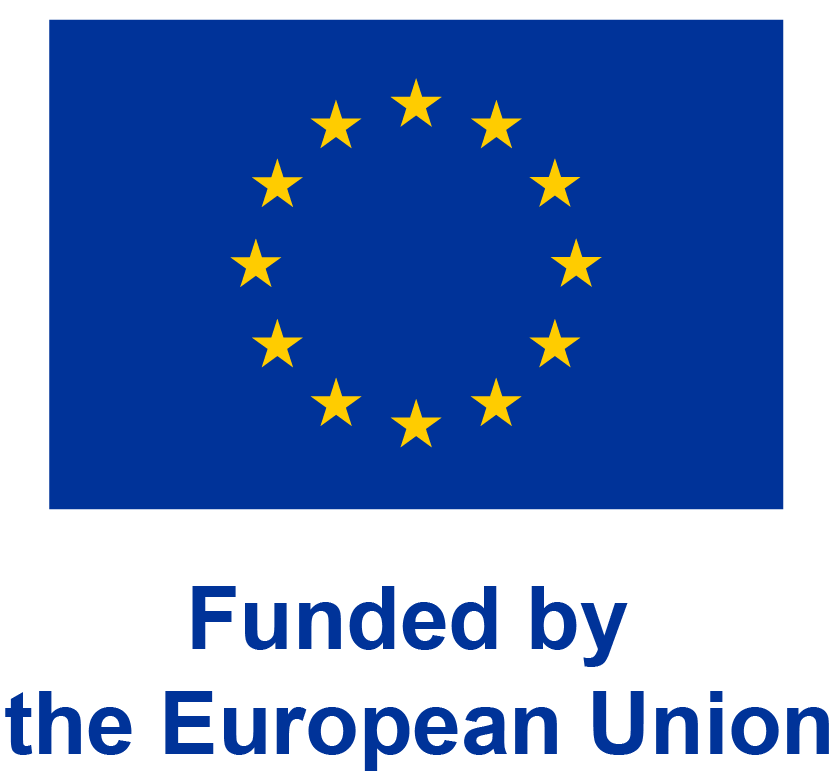}%
}
\end{wrapfigure}
The authors wish to thank the organizers and sponsors of the 12th Heidelberg Laureate Forum where this collaboration was started.
We are indebted to Thomas Kahle for helping with the Lean formalization of the proof of \Cref{thm: varphi_k}. %
T.B. was funded by the European Union's Horizon 2020 research
and innovation programme under the Marie Skłodowska-Curie grant agreement
No.~101110545.

\printbibliography

\end{document}